\documentclass[11pt]{amsart}  

\usepackage{amsmath}
\usepackage{amsthm}
\usepackage{amsfonts}
\usepackage{amssymb}
\usepackage{enumitem}
\usepackage{eucal}
\usepackage[all]{xy}
\usepackage{amsxtra}
\usepackage{appendix} 
\usepackage{tensor} 
\usepackage{url}
\usepackage{upgreek}
\usepackage{etoolbox}
\apptocmd{\sloppy}{\hbadness 10000\relax}{}{}
\usepackage{comment}
\usepackage{mathtools}
\usepackage{hyperref}
\allowdisplaybreaks

\RequirePackage{color}
\definecolor{bwgreen}{rgb}{0.183,1,0.5}
\definecolor{bwmagenta}{rgb}{0.7,0.0,0.1}
\definecolor{bwblue}{rgb}{0.317,0.161,1}

\hypersetup{
pdfauthor = {\textcopyright\ Christopher Davis},
pdfcreator = {\LaTeX\ with package \flqq hyperref\frqq},
colorlinks = {true},
linkcolor={bwmagenta},	 %color red Color for normal internal links.
anchorcolor={},	 %color	black Color for anchor text.
citecolor={bwblue},	% color	green Color for bibliographical citations in text.
filecolor={},	% color cyan Color for URLs which open local files.
menucolor={},	% color	red	Color for Acrobat menu items.
runcolor={},	% color	filecolor Color for run links (launch annotations).
urlcolor={bwgreen}, %	 color	magenta	Color for linked URLs.
}

   \topmargin=0in
   \oddsidemargin=-0.1in
   \evensidemargin=-0.1in
   \textwidth=6.7in
   \textheight=8.2in

%\xymatrixcolsep{2.8pc}
%\CompileMatrices
%\entrymodifiers={+!!<0pt,\fontdimen22\textfont2>}
%\def\objectstyle{\displaystyle}
%%%%%%%%%%%%%%%%%%%%%%%%%%%%%%%%%%%%%%%
% Introduce a nice script font for sheaves

\DeclareFontFamily{OT1}{rsfs}{}
\DeclareFontShape{OT1}{rsfs}{n}{it}{<-> rsfs10}{}
\DeclareMathAlphabet{\mathscr}{OT1}{rsfs}{n}{it}

\DeclareFontFamily{OT1}{pzc}{}
\DeclareFontShape{OT1}{pzc}{n}{it}{<->s*[2.2]pzc}{}
\DeclareMathAlphabet{\mathpzc}{OT1}{pzc}{b}{sl}

%\DeclareFontFamily{OT1}{pzc}{}
%\DeclareFontShape{OT1}{pzc}{m}{it}{<-> s * [0.900] pzcmi7t}{}
%\DeclareMathAlphabet{\mathpzc}{OT1}{pzc}{m}{it}

%%%%%%%%%%%%%%%%%%%%%%%%%%%%%%%%%%%%%%%%
% These commands allow one to make roman numerals
\makeatletter

\newcommand{\Rmnum}[1]{\expandafter\@slowromancap\romannumeral #1@}
\makeatother

%%%%%%%%%%%%%%%%%%%%%%%%%%%%%%%%%%%%%%%
% Math Operators
\DeclareMathOperator{\Ann}{Ann}

%\DeclareMathOperator{\can}{can}

%
%\DeclareMathOperator{\pr}{pr}

%

%\DeclareMathOperator{\BT}{BT}

%\DeclareMathOperator{\Fr}{Fr}
%%%%%%%%%%%%%%%%%%%%%%%%%%%%%%%%%%%%%%%%%%%%%%%%%%%%%%%%%%%%%%%%%%%%%%%%%%%%%%%
%New Commands		

%%% The usual rings %%%
\newcommand*{\FF}{\ensuremath{\mathbf{F}}}   
   
\newcommand*{\CC}{\ensuremath{\mathbf{C}}}              
\newcommand*{\ZZ}{\ensuremath{\mathbf{Z}}}               
\newcommand*{\QQ}{\ensuremath{\mathbf{Q}}}

\newcommand{\OCp}{\mathcal{O}_{\CC_p}} 
         
%%% The usual schemes %%%

%%% Gothic letters (ideals) %%%

%%% Capital scripts (sheaves etc)

%\renewcommand*{\O}{\mathscr{O}}  
% These next four lines are just a hack so we can get the danish letter \O.  Probably they should be deleted for all papers that don't involve this letter and replaced by the preceding line.         

% end hack        

%%% script capitals %%%

%%% Various sheaves %%%

%%% Misc %%%

            % end of proof
     % definition

%\newcommand*{\R}{\ensuremath{\mathscr{R}}}              

%\renewcommand*{\int}{\ensuremath{\mathrm{int}}}

%\renewcommand*{\o}[1]{\overline{#1}}

%\newcommand*{\Win}{\ensuremath{\mathscr{W}\!\mathit{in}}}
%\newcommand*{\Win}{\textswab{Win}}
%\newcommand*{\BT}{\textswab{BT}}

%%%%%%%%%%%%%%%%%%%%%%%%%%%%%%%%%%%%%%%%%%%%%%%%%%%%%%%%%%%%%%%%%%%%%%%%%%%%%%%
% Define environments

\numberwithin{equation}{section}
\theoremstyle{plain}
  \newtheorem{theorem}[equation]{Theorem}
    
  \newtheorem{proposition}[equation]{Proposition}
  \newtheorem{lemma}[equation]{Lemma}
  \newtheorem{corollary}[equation]{Corollary}
  
\newenvironment{customtheorem}[1]
  {\innercustomtheorem}
  {\endinnercustomtheorem}

\theoremstyle{definition}
  \newtheorem{definition}[equation]{Definition}
  \newtheorem{notation}[equation]{Notation}
  \newtheorem{assumption}[equation]{Assumption}

\theoremstyle{remark}
  \newtheorem{example}[equation]{Example}
  \newtheorem{remark}[equation]{Remark}

%\numberwithin{theorem}{section}  
%\numberwithin{equation}{section}

%%%%%%%%%%%%%%%%%%%%%%%%%%%%%%%%%%%%%%%%%%%%%%%%%%%%%%%%%%%%%%%%%%%%%%%%%%%%%%%%

% This commend creates colored and named commentary for use with collaborators

%\usepackage[usenames,dvipsnames]{color}

%%%%%%%%%%%%%%%%%%%%%%%%%%%%%%%%%%%%%%%%%%%%%%%%%%%%%%%%%%%%%%%%%%%%%%%%%%%%%%%

%This command creates a footnote and an indicator mark in the margin pointing to problems, questions, or unresolved (small) issues
%To use type \edit{ insert text here }

%%%%%%%%%%%%%%%%%%%%%%%%%%%%%%%%%%%%%%%%%%%%%%%%%%%%%%%%%%%%%%%%%%%%%%%%%%%%%%%

%This command creates a box marked ``To Do'' around text.
%To use type \todo{  insert text here  }.

%%%%%%%%%%%%%%%%%%%%%%%%%%%%%%%%%%%%%%%%%%%%%%%%%%%%%%%%%%%%%%%%%%%%%%%%%%%%%%%

\raggedbottom

\begin{document}
\title{On the $p$-typical de\thinspace Rham-Witt complex over $W(k)$}

\author{Christopher Davis}
\address{University of California, Irvine}
\email{daviscj@uci.edu}

%\dedicatory{}

\subjclass[2010]{Primary:  13F35, Secondary: 14F30, 13N05}
\keywords{Witt vectors, de\thinspace Rham-Witt complex, perfectoid rings}
\date{\today}

\begin{abstract}
Hesselholt and Madsen in \cite{HM04} define and study the (absolute, $p$-typical) de\thinspace Rham-Witt complex in mixed characteristic, where $p$ is an odd prime.  They give as an example an elementary algebraic description of the de\thinspace Rham-Witt complex over $\ZZ_{(p)}$, $W_{\cdot} \Omega^{\bullet}_{\ZZ_{(p)}}$.  The main goal of this paper is to construct, for $k$ a perfect ring of characteristic~$p > 2$, a Witt complex over $A = W(k)$ with an algebraic description which is completely analogous to Hesselholt and Madsen's description for $\ZZ_{(p)}$.  Our Witt complex is not isomorphic to the de\thinspace Rham-Witt complex; instead we prove that, in each level, the de\thinspace Rham-Witt complex over $W(k)$ surjects onto our Witt complex, and that the kernel consists of all elements which are divisible by arbitrarily high powers of $p$.  We deduce an explicit description of $W_n \Omega^{\bullet}_A$ for each $n \geq 1$.   We also deduce results concerning the de\thinspace Rham-Witt complex over certain $p$-torsion-free perfectoid rings.  
\end{abstract}

\maketitle

\section*{Introduction}

Fix an odd prime $p$ and a $\ZZ_{(p)}$-algebra $R$.  In \cite{HM04}, Hesselholt and Madsen define the (absolute, $p$-typical) de\thinspace Rham-Witt complex over $R$ to be the initial object in the category of Witt complexes over $R$.  Their definition generalizes the de\thinspace Rham-Witt complex of Bloch-Deligne-Illusie, which was defined for $\FF_p$-algebras.  The goal of this paper is to define a Witt complex $E_{\cdot}^{\bullet}$ over $A = W(k)$, where $k$ is a perfect ring of characteristic~$p$, and to use this Witt complex to describe the de\thinspace Rham-Witt complex over $W(k)$ and also to study the de\thinspace Rham-Witt complex over certain perfectoid rings $B$.

Among many other conditions, the de\thinspace Rham-Witt complex $W_{\cdot} \Omega^{\bullet}_R$ is a pro-system of differential graded rings. There is an isomorphism $W_{n}(R) \rightarrow W_{n} \Omega^0_R$, so the degree~zero piece of the de\thinspace Rham-Witt complex is well-understood.  For each positive integer $n$ and for every degree~$d$, there is a surjective morphism of differential graded rings 
\[
\Omega^{d}_{W_n(R)} \twoheadrightarrow W_n\Omega^{d}_R,
\]
and so it is easy to write down elements of $W_{n}\Omega^{d}_R$.   On the other hand, especially in the degree~one case $d = 1$, it is often difficult to determine which of these elements in $W_n \Omega^1_A$ are non-zero.  The author is not aware of a complete algebraic description of the (absolute, $p$-typical) de\thinspace Rham-Witt complex in mixed characteristic for any examples other than $\ZZ_{(p)}$ and polynomial algebras over this ring.  One of the goals of the current paper is to give a complete algebraic description of the de\thinspace Rham-Witt complex over $A = W(k)$, where $k$ is a perfect ring of odd characteristic~$p$.  For example, we prove that in the de\thinspace Rham-Witt complex over $W(k)$, the element $dV^n(1)$ is a non-trivial $p^n$-torsion element for every integer $n \geq 1$.  It is easy to see, using the relation $pdV = Vd$, that this element is indeed $p^n$-torsion, but showing that this element is non-zero takes much more work.

To better analyze relations within the de\thinspace Rham-Witt complex, we first define in Section~\ref{E section} a Witt complex $E_{\cdot}^{\bullet}$ over $A = W(k)$ which has a simple algebraic description as a $W(k)$-module.   The proof that $E_{\cdot}^{\bullet}$ is indeed a Witt complex over $W(k)$ is one of the major parts of this paper.  It is not isomorphic to the de\thinspace Rham-Witt complex over $W(k)$; see Remark~\ref{not iso}.  Instead, in each level $n$ and in each positive degree $d \geq 1$, our Witt complex $E_{\cdot}^{\bullet}$ is the quotient of the de\thinspace Rham-Witt complex by the $W(k)$-submodule consisting of all elements which are divisible by arbitrarily large powers of $p$.  In the language of \cite[Remark~4.8]{Hes15}, our Witt complex $E_{\cdot}^{\bullet}$ is the $p$-typical de\thinspace Rham-Witt complex over $W(k)$ relative to the $p$-typical $\lambda$-ring $(W(k), s_{\varphi})$, where $s_{\varphi}$ is the ring homomorphism $W(k) \rightarrow W(W(k))$ recalled in Proposition~\ref{sF map} below.

Our description of $E_{\cdot}^{\bullet}$, which we define for each $W(k)$ with $k$ a perfect ring of odd characteristic~$p$, is completely modeled after Hesselholt and Madsen's description of $W_n\Omega^1_{\ZZ_{(p)}}$ from \cite[Example~1.2.4]{HM04}.  They show that for all $n \geq 1$, there is an isomorphism of $\ZZ_{(p)}$-modules
\begin{equation} \label{localized equation}
W_n\Omega^1_{\ZZ_{(p)}} \cong \prod_{i = 0}^{n-1} \ZZ/p^i\ZZ \cdot dV^i(1).  
\end{equation}
This shows that $W_n\Omega^1_{\ZZ_{(p)}}$ is non-zero if $n \geq 2$.   The proof in \cite{HM04} involves the topological Hochschild spectrum $T(\ZZ_{(p)})$.  The results below provide an alternative (and elementary) proof  that $W_n\Omega^1_{\ZZ_{(p)}}$ is non-zero if $n \geq 2$.

Of course an elementary algebraic proof of the isomorphism in Equation~(\ref{localized equation}) could be given by directly verifying that the stated groups satisfy all the necessary relations to form a Witt complex.  It is this approach we follow in the current paper for the case $A = W(k)$, where $k$ is a perfect ring of odd characteristic~$p$.  Moreover, we prove that, for such $A$ and for every $n \geq 1$, there is a surjective map
\begin{equation} \label{surj from dRW}
W_n\Omega^1_{A} \twoheadrightarrow \prod_{i = 0}^{n-1} A/p^i A \cdot dV^i(1) =: E_{n}^1,  
\end{equation}
and we prove that the kernel of this map consists of all elements of $W_n\Omega^1_{A}$ which are divisible by arbitrarily large powers of $p$.

The groups $E_n^{\bullet}$ in a Witt complex over $A$ are in particular $W_n(A)$-modules, and the $W_n(A)$-module structure we define is also analogous to the description for $\ZZ_{(p)}$.  In the de\thinspace Rham-Witt complex over $\ZZ_{(p)}$, and in fact in any Witt complex, for integers $i, j \geq 1$, one has 
\begin{equation} \label{V multiplication}
V^j(1) \, dV^i(1) = p^j \,dV^i(1).
\end{equation}  This alone does not completely determine the $W_n(A)$-module structure, but for our specific case $A = W(k)$, there is a ring homomorphism $s_{\varphi}: A \rightarrow W(A)$, and we require that for all $a \in A$ and $x \in E_n^1$, we have $s_{\varphi}(a) x = a \cdot x$.  Here the product $s_{\varphi}(a) x$ on the left side refers to the $W_n(A)$-module structure we wish to define, and the product $a \cdot x$ on the right side refers to the  $A$-module structure on $E_n^1$ that is apparent from the description in Equation~(\ref{surj from dRW}).  This requirement completely determines our $W_n(A)$-module structure.

With these prerequisites in mind, the verification that our complex is a Witt complex is largely straightforward.  The most difficult step is proving that our complex satisfies 
\[
F d [a] = [a]^{p-1} d[a] \in E_n^1
\] 
for every $a \in A$ and for every integer $n \geq 1$.  The difficulty, which arises repeatedly in what follows, lies in the fact that the multiplicative Teichm\"uller lift $[\cdot]: A \rightarrow W(A)$ is not related in a simple way to our ring homomorphism lift $s_{\varphi}: A \rightarrow W(A)$.

Once we know that our complex $E_{\cdot}^{\bullet}$ is a Witt complex over $A$, we attain relatively easily a complete algebraic description of the de\thinspace Rham-Witt complex $W_{\cdot} \Omega^{\bullet}_A$.  See Section~\ref{app to W(k)} for the proofs of the following results, as well as for a more complete (but longer) description of $W_n\Omega^1_A$ (Corollary~\ref{de Rham-Witt as A-module}).

\begin{customtheorem}{A}
Let $k$ denote a perfect ring of odd characteristic~$p$ and let $A = W(k)$. 
\begin{enumerate}
\item Fix an integer $n \geq 1$.  Let $S_n \subseteq W_n\Omega^1_A$ denote $\cap_{j = 1}^{\infty} p^jW_n\Omega^1_A$,  the $W_n(A)$-submodule of all elements which are infinitely $p$-divisible.  Then we have an isomorphism of abelian groups
\[
W_n\Omega^1_A / S_n \cong \prod_{i = 0}^{n-1} A/p^iA.
\]
\item Fix integers $n \geq 1$ and $d \geq 2$.  Then we have an isomporphism of abelian groups
\[
W_n\Omega^d_A \cong \prod_{i = 0}^{n-1} \Omega^d_{A}.
\]
\end{enumerate}
\end{customtheorem}

In Section~\ref{A/xA section}, we turn to describing the de\thinspace Rham-Witt complex over the quotient ring $A/xA$, for an element $x \in A$; this is done with the purpose of applying it in the case that $A/xA$ is a perfectoid ring $B$, and $A = W(B^{\flat})$ is the ring of Witt vectors of the tilt of $B$.  Our complete algebraic description of $W_n \Omega^1_A$ makes extensive use of the ring homomorphisms $s_{\varphi}: A \rightarrow W_n(A)$, and in general we have no such ring homomorphisms $B \rightarrow W_n(B)$, so our algebraic description of $W_n\Omega^1_B$ is less complete.  However, for a certain class of perfectoid rings, we are able to completely describe the kernel of the restriction map $W_{n+1}\Omega^1_B \rightarrow W_n \Omega^1_B$.  We phrase the following theorem in slightly more generality, to include also the case $W(k)$ which is proved earlier.

\begin{customtheorem}{B}
Let $p$ denote an odd prime.  Let $S$ denote either $W(k)$ for $k$ a perfect ring of characteristic~$p$, or else let $S$ denote a $p$-torsion-free perfectoid ring for which there exists some non-zero $p$-power torsion element $\omega \in \Omega^1_S$.  In either of these cases, the following is a short exact sequence of $W_{n+1}(S)$-modules:
\[
0 \rightarrow S \stackrel{(-d, p^n)}{\longrightarrow} \Omega^1_S \oplus S \stackrel{V^n + dV^n}{\longrightarrow} W_{n+1}\Omega^1_S \stackrel{R}{\rightarrow} W_n\Omega^1_S \rightarrow 0.
\]
\end{customtheorem}

See Proposition~\ref{hWn} and Proposition~\ref{B sequence proposition} for the proofs, and also for a description of the module structures.  The existence of an element $\omega$ as described in the statement is guaranteed, for example, whenever $\zeta_p  \in S$ and $d\zeta_p \neq 0$.

One motivation for studying the de\thinspace Rham-Witt complexes we consider in this paper is our hope to adapt results from Hesselholt's paper \cite{Hes06}.  That paper concerns the de\thinspace Rham-Witt complex over the ring of integers in an algebraic closure of a mixed characteristic local field, and we hope to perform a similar analysis in the context of perfectoid rings.  Our proofs for perfectoid rings will be modeled after Hesselholt's proof for $\mathcal{O}_{\overline{\QQ_p}}$, and our proofs will use an induction argument that requires a precise description of the kernel of restriction $W_{n+1}\Omega^1_B \rightarrow W_n \Omega^1_B$.   We will pursue this direction in joint work with  Irakli Patchkoria.

A second, but indirect, motivation for the current paper is the recent remarkable work of Bhatt-Morrow-Scholze in \cite{BMS16}, which makes use of the de\thinspace Rham-Witt complex in mixed characteristic.  Currently this is only a philosophical motivation, however, because they study the \emph{relative} de\thinspace Rham-Witt complex of Langer-Zink \cite{LZ04}, whereas we study the absolute de\thinspace Rham-Witt complex of Hesselholt-Madsen \cite{HM03, HM04, Hes05}.  Our work is not directly relevant to the work of Bhatt-Morrow-Scholze, but it could potentially be relevant to generalizations of their work which involved the \emph{absolute} de\thinspace Rham-Witt complex.  

\subsection{Notation} Throughout this paper, $p > 2$ denotes an odd prime, $k$ is a perfect ring of characteristic~$p$, $W$ denotes $p$-typical Witt vectors, and $A = W(k)$.  To distinguish between the Witt vector Frobenius on $A = W(k)$ and on $W(A)$, we write $\varphi$ for the Witt vector Frobenius on $A$ and we write $F$ for the Witt vector Frobenius on $W(A)$ and on $W_{\cdot} \Omega^{\bullet}_A$.  Rings in this paper are assumed to be commutative and to have unity, and ring homomorphisms are assumed to map unity to unity.  We write $\Omega^1_{R}$ for the $R$-module of absolute K\"ahler differentials, i.e., $\Omega^1_R = \Omega_{R/\ZZ}$ in the notation of \cite[Section~25]{Mat89}.  The de\thinspace Rham-Witt complex we consider is the absolute, $p$-typical de\thinspace Rham-Witt complex defined in \cite[Introduction]{HM04}.  

\subsection{Acknowledgments} This paper would not exist without the regular meetings I had with Lars Hesselholt during my two years in Copenhagen as his postdoc.  Also extremely helpful were visits to Copenhagen by Bhargav Bhatt and Matthew Morrow; in particular, the arguments in Section~\ref{results A} involving the de\thinspace Rham complex were shown to me by Bhargav Bhatt.  Thanks also to Bryden Cais, Kiran Kedlaya, Irakli Patchkoria, and David Zureick-Brown for help throughout the project.  I am also grateful to Bhargav Bhatt, Lars Hesselholt, and Kiran Kedlaya for feedback on earlier versions of this paper.  Finally, I thank the anonymous referee for a careful reading of an earlier version of this paper.

\section{Background on Witt complexes and the de\thinspace Rham-Witt complex}

Fix $k$, a perfect ring of odd characteristic~$p$ and let $A = W(k)$.  The main goal of this paper is to construct a certain Witt complex over $A$, and to use this Witt complex to deduce properties of the de\thinspace Rham-Witt complex over $A$.  Similar properties are proven in the work of Hesselholt \cite{Hes05, Hes06} and Hesselholt-Madsen \cite{HM03, HM04}; the main difference between our results and these earlier results is that our proofs use only algebra.  The only aspect of the current paper which is not elementary is our proof that $\Omega^1_{W(k)}$ has no non-trivial $p$-torsion (Proposition~\ref{Omega bijection}), which uses the cotangent complex.  The current paper does not use any notions from algebraic topology, such as the spectrum $TR_{\bullet}^{\cdot}$.  

The current paper does, however, use many standard facts about ($p$-typical) Witt vectors $W(R)$ and the ($p$-typical, absolute) de\thinspace Rham-Witt complex $W_{\cdot} \Omega^{\bullet}_R$, and it is written with the assumption that the reader is familiar with their basic properties, including the case $R$ is not characteristic~$p$.  For background on Witt vectors, we refer to \cite{Ill79} or to the brief introduction given in Section~1 of \cite{HM04}.  A thorough treatment of Witt vectors is given in Section~1 of \cite{Hes15}, but those results are framed in the context of big Witt vectors instead of $p$-typical Witt vectors.  

We work in this section over an arbitrary $\ZZ_{(p)}$-algebra $R$, where $p$ is an odd prime.  We now recall the basic properties of Witt complexes and the de\thinspace Rham-Witt complex which we will use.  Our reference is \cite{HM04}.  

The de\thinspace Rham-Witt complex over $R$ (or, more generally, any Witt complex over $R$) is a pro-system of differential graded rings.  The index indicating the position in the pro-system is a positive integer $n = 1, 2, \ldots$ which we refer to as the \emph{level}.  The index indicating the degree in the differential graded ring is a non-negative integer $d = 0, 1, \ldots$ which we refer to as the \emph{degree}.  We write $E_{n}^{d}$ for the level~$n$, degree~$d$ component of a Witt complex $E^{\bullet}_{\cdot}$.  

\begin{definition}[{\cite[Introduction]{HM04}}] Fix an odd prime $p$ and a $\ZZ_{(p)}$-algebra $R$.  A \emph{Witt complex} over $R$ is the following.
\begin{enumerate}
\item A pro-differential graded ring $E^{\bullet}_{\cdot}$ and a strict map of pro-rings 
\[
\lambda: W_{\cdot}(R) \rightarrow E^{\bullet}_{\cdot}.
\]
\item A strict map of pro-graded rings 
\[
F: E_{\cdot}^{\bullet} \rightarrow E_{\cdot - 1}^{\bullet}
\]
such that $F \lambda = \lambda F$ and for all $r \in R$, we have
\[
F d \lambda\left([r]\right) = \lambda\left([r]^{p-1}\right) d\lambda\left([r]\right).
\]
\item A strict map of graded $E_{\cdot}^{\bullet}$-modules
\[
V: F_*E_{\cdot-1}^{\bullet} \rightarrow E_{\cdot}^{\bullet}.
\]
\big(In other words,
\[
V( F(\omega) \eta) = \omega V(\eta) \text{ for all } \omega \in E_{\cdot}^{\bullet}, \eta \in E_{\cdot-1}^{\bullet},
\]
and similarly for multiplication on the right.\big) The map $V$ must further satisfy $V \lambda = \lambda V$ and 
\[
FdV = d, \qquad FV = p.
\]
\end{enumerate}
\end{definition}

\begin{remark}
In this paper we never consider the prime $p = 2$.  See \cite[Definition~4.1]{Hes15} for a definition of Witt complex which can be used for all primes, or \cite{Cos08} for a careful treatment of the $2$-typical de\thinspace Rham-Witt complex.  One subtlety is that for $p = 2$, the differential does not necessarily satisfy $d \circ d = 0$.
\end{remark}

The following theorem defines the de\thinspace Rham-Witt complex over $R$ as the initial object in the category of Witt complexes over $R$.  Its existence is proved in \cite{HM04}.

\begin{theorem}[{\cite[Theorem~A]{HM04}}]
Let $R$ denote a $\ZZ_{(p)}$-algebra, where $p$ is an odd prime.  There is an initial object $W_{\cdot} \Omega^{\bullet}_R$ in the category of Witt complexes over $R$.  We call this complex the de\thinspace Rham-Witt complex over $R$.  Moreover, for every $d \geq 0$ and $n \geq 1$, the canonical map
\[
\Omega^{d}_{W_n(R)} \rightarrow W_n\Omega^d_R
\]
is surjective.
\end{theorem}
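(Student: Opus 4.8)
The statement has two parts: the existence of an initial object in the category $\mathcal{W}(R)$ of Witt complexes over $R$, and, granted that, the surjectivity of each canonical map $\Omega^d_{W_n(R)} \to W_n\Omega^d_R$. I would organize the proof so that the second part is a near-formal consequence of the first.

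For existence I would argue softly. First, $\mathcal{W}(R)$ is complete, with limits formed levelwise and degreewise: a small product or an equalizer of Witt complexes over $R$ is again one, because every defining condition --- graded-commutativity, the Leibniz rule, $d\circ d = 0$, strictness of $\lambda$, $F\lambda = \lambda F$, the relation $Fd\lambda([r]) = \lambda([r]^{p-1})\,d\lambda([r])$, $V\lambda = \lambda V$, the projection formula for $V$, and $FdV = d$, $FV = p$ --- is an identity between composites of the structure maps and so is inherited by any limit. Second, $\mathcal{W}(R)$ has a weakly initial set: inside any Witt complex $E^\bullet_\cdot$ lies the smallest sub-Witt-complex containing $\im\lambda$, and its underlying sets have cardinality bounded solely in terms of $|R|$ (the level and degree indices range over fixed countable sets and all structure maps are finitary), so up to isomorphism there is only a set of such $\lambda$-generated Witt complexes, and every Witt complex receives a map from one of them. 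By Freyd's initial object theorem --- a complete, locally small category with a weakly initial set has an initial object --- the category $\mathcal{W}(R)$ has an initial object $W_\cdot\Omega^\bullet_R$, and by construction it is generated as a Witt complex by $\im\lambda$.

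For surjectivity, put $S^d_n := \im\!\bigl(\Omega^d_{W_n(R)} \to W_n\Omega^d_R\bigr)$. These assemble into a sub-graded-pro-ring $S^\bullet_\cdot \subseteq W_\cdot\Omega^\bullet_R$ that contains $\lambda(W_\cdot(R))$ in degree zero and is closed under $d$. If I can show $S^\bullet_\cdot$ is also closed under $F$ and $V$, then $S^\bullet_\cdot$ is itself a Witt complex over $R$, and the unique morphism $W_\cdot\Omega^\bullet_R \to S^\bullet_\cdot$ supplied by initiality, composed with the inclusion $S^\bullet_\cdot \hookrightarrow W_\cdot\Omega^\bullet_R$, is the identity of the initial object; hence the inclusion is onto, $S^\bullet_\cdot = W_\cdot\Omega^\bullet_R$, and the asserted surjectivity follows. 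Closure under $F$ is easy: since $F$ is a graded ring map, it is enough that $F\,d\lambda(a) \in S^\bullet_\cdot$ for $a \in W_n(R)$; writing $a = \sum_i V^i([r_i])$ and using additivity of $d$, the terms with $i \geq 1$ contribute $F\,dV^i\lambda([r_i]) = d\bigl(V^{i-1}\lambda([r_i])\bigr)$ by $FdV = d$, while the $i = 0$ term contributes $\lambda([r_0]^{p-1})\,d\lambda([r_0])$ by the relation in condition~(2) of the definition --- both in $S^\bullet_\cdot$. Closure under $V$ is the delicate point: one expands each Witt vector into Teichm\"uller components to reduce to computing $V$ of a monomial $\lambda(a_0)\,d\lambda(a_1)\cdots d\lambda(a_d)$ with each $a_j$ of the form $V^m([r])$, and then pushes the $V$ through using the projection formula $V(F\omega\cdot\eta) = \omega\,V(\eta)$, the identity $V(d\eta) = p\,dV(\eta)$ and its consequence $V(\omega)V(\eta) = p\,V(\omega\eta)$, the Leibniz rule, and the reduction $V^i([r]) = F\bigl(V^{i+1}([r])\bigr)$ for $i \geq 1$, until $V$ of the monomial is rewritten as a sum of products of elements of $W_n(R)$ with their differentials, i.e. as an element of $S^\bullet_\cdot$.

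The main obstacle is this last step: the identities above must be assembled carefully enough that every $V$ of a de\thinspace Rham-Witt form lands back inside the image of the de\thinspace Rham complex, and in particular the outer Teichm\"uller factor $\lambda(a_0)$ --- which is not under a differential --- must be handled by repeatedly using Leibniz to shift differentials onto it together with the reduction $V^i([r]) = F(V^{i+1}([r]))$. (The alternative, more explicit construction of $W_\cdot\Omega^\bullet_R$ --- as the quotient of the pro-differential-graded ring $\Omega^\bullet_{W_\cdot(R)}$ by the smallest compatible system of differential graded ideals such that the Witt-vector Frobenius and Verschiebung extend to operators $F$ and $V$ on the quotient satisfying the Witt complex axioms --- makes surjectivity tautological, but only by trading this obstacle for a different one: the relations to be imposed are phrased in terms of $F$ and $V$, which are well-defined only after enough relations have been imposed, so that the system of ideals must itself be built by an inductive fixed-point argument. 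This is the technical core of \cite[Theorem~A]{HM04}.)
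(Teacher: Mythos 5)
The paper cites this result from \cite{HM04} without reproving it, so the comparison is with Hesselholt--Madsen's original argument, which is constructive: they build $W_\cdot\Omega^\bullet_R$ as a quotient of the de\thinspace Rham pro-complex $\Omega^\bullet_{W_\cdot(R)}$ by an inductively generated system of differential graded ideals (the fixed-point bookkeeping you mention in your closing parenthetical), so that surjectivity of $\Omega^d_{W_n(R)}\to W_n\Omega^d_R$ is automatic. Your route is genuinely different and is sound: the soft Freyd argument for existence works because all the Witt-complex axioms are equational, limits are formed levelwise and degreewise, and the $\lambda$-generated sub-Witt-complexes give a weakly initial set; and the initiality trick (a sub-Witt-complex through which the unique endomorphism of the initial object factors must be everything) then delivers surjectivity. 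The one place your proposal is more laboured than necessary --- and where you misidentify the difficulty --- is closure of $S^\bullet_\cdot$ under $V$. You call this ``the delicate point,'' but it is the easy one: the identity $V(x_0\,dx_1\cdots dx_m) = V(x_0)\,dV(x_1)\cdots dV(x_m)$, which holds in every Witt complex (it is recalled in Proposition~\ref{Witt complex relations} above, from \cite[Lemma~1.2.1]{HM04}, and follows in one line from the projection formula for $V$, multiplicativity of $F$, and $FdV = d$), combines with $V\lambda = \lambda V$ to give $V\bigl(\lambda(a_0)\,d\lambda(a_1)\cdots d\lambda(a_d)\bigr) = \lambda(Va_0)\,d\lambda(Va_1)\cdots d\lambda(Va_d)$, an element of $S^d_{n+1}$ on the nose, with no Teichm\"uller expansion or inductive reduction. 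Closure under $F$, which you do handle correctly, is the genuinely non-trivial one, being the only place the axiom $Fd\lambda([r]) = \lambda([r]^{p-1})\,d\lambda([r])$ enters. What the constructive route buys over yours is the explicit quotient presentation of $W_\cdot\Omega^\bullet_R$, which the present paper leans on, for instance in Proposition~\ref{Fil}; what yours buys is brevity of the existence proof.
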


The following result, like our last result, is proved in \cite{HM04}. It describes the degree~0 piece and the level~1 piece of the de\thinspace Rham-Witt complex, respectively.

\begin{theorem}
Let $R$ denote a $\ZZ_{(p)}$-algebra, where $p$ is an odd prime.
\begin{enumerate}
\item \cite[Remark~1.2.2]{HM04} The canonical map $\lambda: W_n(R) \rightarrow W_n \Omega^0_R$ is an isomorphism for all~$n \geq 1$.
\item \cite[Theorem~D and the first sentence of the proof of Proposition~5.1.1]{HM04} The canonical map $\Omega^{\bullet}_R \rightarrow W_1 \Omega^{\bullet}_R$ is an isomorphism.
\end{enumerate}
\end{theorem}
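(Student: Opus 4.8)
The statement bundles two assertions, and the plan is to treat them separately; in each of them surjectivity is immediate and only injectivity needs an argument. For part~(1), I would first note that in degree zero the canonical surjection $\Omega^0_{W_n(R)} \to W_n\Omega^0_R$ of the preceding theorem is nothing but $\lambda$, since $\Omega^0_{W_n(R)} = W_n(R)$; hence $\lambda$ is onto. To see it is injective I would produce a one-sided inverse via initiality: let $\widetilde{E}^{\bullet}_{\cdot}$ be the degenerate pro-differential graded ring with $\widetilde{E}^0_n = W_n(R)$, $\widetilde{E}^d_n = 0$ for $d \geq 1$, the Witt-vector restriction maps as structure maps, zero differential, $\lambda = \id$, and $F$, $V$ the Witt-vector Frobenius and Verschiebung. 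I would check this is a Witt complex over $R$: the only conditions with content are the projection formula $V(F(x)\,y) = x\,V(y)$ and $FV = p$, both standard identities for $p$-typical Witt vectors, whereas $Fd\lambda([r]) = \lambda([r]^{p-1})d\lambda([r])$ and $FdV = d$ hold trivially since every differential in $\widetilde{E}^{\bullet}_{\cdot}$ vanishes. Initiality then produces a morphism of Witt complexes $g\colon W_{\cdot}\Omega^{\bullet}_R \to \widetilde{E}^{\bullet}_{\cdot}$, and since morphisms of Witt complexes are compatible with $\lambda$, its degree-zero component satisfies $g_n\circ\lambda = \id_{W_n(R)}$. Thus $\lambda$ is injective, and together with surjectivity it is an isomorphism.

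For part~(2), surjectivity of $\Omega^{\bullet}_R \to W_1\Omega^{\bullet}_R$ is again the preceding theorem, now using $W_1(R)=R$. For injectivity I would use the same strategy: it suffices to construct \emph{some} Witt complex $\widetilde{E}^{\bullet}_{\cdot}$ over $R$ whose level-one term is exactly $\widetilde{E}^{\bullet}_1 = \Omega^{\bullet}_R$, with $\lambda\colon W_1(R)=R \to \widetilde{E}^0_1 = R$ the identity. Granting such an $\widetilde{E}^{\bullet}_{\cdot}$, initiality supplies $g\colon W_{\cdot}\Omega^{\bullet}_R \to \widetilde{E}^{\bullet}_{\cdot}$, and the composite $\Omega^{\bullet}_R \twoheadrightarrow W_1\Omega^{\bullet}_R \xrightarrow{\,g_1\,} \widetilde{E}^{\bullet}_1 = \Omega^{\bullet}_R$ is a map of differential graded rings that is the identity in degree zero: the degree-zero part of $\Omega^{\bullet}_R \to W_1\Omega^{\bullet}_R$ is $\lambda$, and $g_1\circ\lambda = \id$ by the compatibility of $g$ with $\lambda$. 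Since any endomorphism of the differential graded ring $\Omega^{\bullet}_R$ that fixes $\Omega^0_R = R$ necessarily fixes the generators $d r$ and hence is the identity, it follows that $g_1$ is a left inverse of $\Omega^{\bullet}_R \to W_1\Omega^{\bullet}_R$, which is therefore an isomorphism.

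The main obstacle will be constructing the auxiliary Witt complex required in part~(2). Unlike the degenerate complex of part~(1), this is not a formal matter, because relations imposed at higher levels could a priori descend to level one; excluding that possibility is precisely what the explicit construction of the de\thinspace Rham-Witt complex in \cite{HM04} accomplishes. For a self-contained proof I would reproduce that construction, but since the present statement is a recollection, I would instead cite \cite{HM04}.
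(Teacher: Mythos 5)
Your proof is correct and follows essentially the same route the paper takes by deferring to \cite{HM04}: for part~(1), the degenerate Witt complex with $\widetilde{E}^0_n = W_n(R)$ and zero in positive degrees is exactly the argument of the cited \cite[Remark~1.2.2]{HM04}, and your use of initiality to produce a one-sided inverse to $\lambda$ is the standard way to read it off. For part~(2) you correctly isolate the non-formal content---constructing a Witt complex whose level-one term is $\Omega^{\bullet}_R$, or equivalently ruling out that higher-level relations propagate down to level one---and, like the paper, refer this to the explicit construction in \cite{HM04}; the conditional reduction via the universal property of $\Omega^{\bullet}_R$ (any dga endomorphism fixing degree zero is the identity) is valid.
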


Two of the main results of this paper are Proposition~\ref{hWn} and Proposition~\ref{B sequence proposition} below.  The main content of these propositions describes, for suitable rings $R$, the intersection 
\[
V^n \left( \Omega^1_R \right) \cap dV^n(R)  \subseteq W_{n+1} \Omega^1_R.
\]  
Our next proposition, which is true for every $\ZZ_{(p)}$-algebra $R$, identifies  
\[
V^n \left( \Omega^1_R \right) + dV^n(R)  \subseteq W_{n+1} \Omega^1_R
\] 
as the kernel of restriction.

\begin{proposition} \label{Fil}
Let $R$ denote a $\ZZ_{(p)}$-algebra, where $p$ is an odd prime.  Fix integers $d \geq 1$ and $n \geq 1$.  Then $\omega$ is in the kernel of restriction
\[
W_{n+1}\Omega^d_R \rightarrow W_{n} \Omega^d_R
\]
if and only if there exist $\alpha \in \Omega^{d}_R$ and $\beta \in \Omega^{d-1}_R$ such that
\[
\omega = V^n(\alpha) + dV^n(\beta).
\]
\end{proposition}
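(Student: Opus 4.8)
The plan is to write an arbitrary element of the kernel in a ``standard form'' built out of Teichm\"uller monomials and Verschiebungs, and then read off the conclusion after applying the restriction map.

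First I would collect the tools. By \cite[Theorem~A]{HM04} the canonical map $\Omega^\bullet_{W_{n+1}(R)} \twoheadrightarrow W_{n+1}\Omega^\bullet_R$ is a surjection of differential graded rings, so $W_{n+1}\Omega^d_R$ is generated as a $W_{n+1}(R)$-module by monomials $d\lambda(\xi_1)\cdots d\lambda(\xi_d)$; moreover $W_1\Omega^\bullet_R = \Omega^\bullet_R$. Every Witt vector decomposes as $\xi=\sum_{i=0}^{n} V^i([a_i])$ with $a_i\in R$, so by additivity of $d$ and the Leibniz rule each generating monomial is a $\ZZ$-linear combination of ``elementary'' monomials
\[
V^{j_0}\lambda([a_0])\cdot dV^{j_1}\lambda([a_1])\cdots dV^{j_d}\lambda([a_d]) \qquad (0\le j_i\le n).
\]
On such monomials I would use the Witt-complex relations --- the projection formula $\eta\,V(\zeta)=V(F(\eta)\zeta)$, the identities $FV=p$, $FdV=d$, $Fd\lambda([a])=\lambda([a])^{p-1}d\lambda([a])$, and their consequence $p\,dV=Vd$ (already used in the introduction) --- to reduce. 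The point is that $F^j d\lambda([a])=\lambda([a])^{p^j-1}d\lambda([a])$, so the projection formula collapses $V^j(x)\,d\lambda([a])$ into a single $V^j$-term, and similarly for products of several $V^{j_i}$-terms; and $p\,dV=Vd$ lets one move a differential past a Verschiebung at the cost of a power of $p$ (equivalently, recognize $V^j d\lambda([a])$ as $p^j\,dV^j\lambda([a])$).

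Carrying out this bookkeeping, and iterating it on the lower-level Teichm\"uller forms that appear inside the $V^i$'s, every element of $W_{n+1}\Omega^d_R$ acquires a presentation
\[
\omega \;=\; \tau \;+\; V^n(\alpha) \;+\; dV^n(\beta) \;+\; \omega',
\]
where $\alpha\in\Omega^d_R$, $\beta\in\Omega^{d-1}_R$, $\tau$ is a $\ZZ$-linear combination of Teichm\"uller monomials $\lambda([a_0])\,d\lambda([a_1])\cdots d\lambda([a_d])$, and $\omega'$ is a $\ZZ$-linear combination of $V^i$- and $dV^i$-images ($1\le i\le n-1$) of Teichm\"uller monomials of $W_{n+1-i}\Omega^\bullet_R$. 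Applying the restriction map and using that it commutes with $V$ and annihilates the image of $V^n\colon W_1\Omega^\bullet_R\to W_{n+1}\Omega^\bullet_R$, the $V^n$- and $dV^n$-terms vanish and $\omega$ restricts to the analogous expression $\overline\tau+\overline{\omega'}$ in $W_n\Omega^d_R$, with $\overline\tau,\overline{\omega'}$ the ``same'' combinations one level down. If $\omega$ lies in the kernel of restriction, this expression is zero, and it remains to deduce that $\tau+\omega'$, read back in $W_{n+1}\Omega^d_R$, already lies in $V^n(\Omega^d_R)+dV^n(\Omega^{d-1}_R)$; together with the middle two terms of the display this gives $\omega\in V^n(\Omega^d_R)+dV^n(\Omega^{d-1}_R)$, as desired.

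The main obstacle is precisely this last deduction: it requires knowing that the ``Teichm\"uller-plus-low-Verschiebung'' part of the standard form is rigid enough that its vanishing after one restriction propagates. I would handle it by proving, by induction on $n$, the stronger statement that the canonical filtration $\Fil^s W_m\Omega^d_R:=V^s(W_{m-s}\Omega^d_R)+dV^s(W_{m-s}\Omega^{d-1}_R)$ is exactly the kernel of the iterated restriction $W_m\Omega^d_R\to W_s\Omega^d_R$, and that it is compatible with products and with $F$, $V$, $d$ in the expected way --- the proposition being the case $(m,s)=(n+1,n)$. The inclusion of $\Fil^s$ in the kernel is the elementary direction; the reverse inclusion is the content, and it is cleanest to extract it from the inductive construction of $W_\bullet\Omega^\bullet_R$ in \cite{HM04}, where this filtration and the associated short exact sequences are put in by hand at each level. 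In short, the conceptual skeleton is the standard-form reduction above, but the technical weight sits in verifying the filtration's formal properties stage by stage.
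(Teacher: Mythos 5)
Your main thread does not close the gap, and you say so yourself. The ``standard form'' presentation $\omega = \tau + V^n(\alpha) + dV^n(\beta) + \omega'$ is not a canonical decomposition --- elements of $W_{n+1}\Omega^d_R$ admit many such presentations --- so the fact that $\tau + \omega'$ \emph{can} be written without any $V^n$- or $dV^n$-terms does not prevent it from lying in $V^n(\Omega^d_R) + dV^n(\Omega^{d-1}_R)$, and you are left with precisely the statement you set out to prove. Your proposed repair --- ``extract it from the inductive construction of $W_\bullet\Omega^\bullet_R$'' --- is a pointer rather than an argument; it does not identify which statement is being extracted or why it holds. The bookkeeping with Teichm\"uller monomials is therefore an unnecessary detour: it neither reduces the difficulty nor supplies the missing step.

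The paper's proof (following \cite[Lemma~3.2.4]{HM03}) avoids attacking the kernel directly. One defines
\[
{}^{\prime}W_n\Omega^d_R := W_{n+1}\Omega^d_R \big/ \big( V^n(\Omega^d_R) + dV^n(\Omega^{d-1}_R) \big),
\]
and verifies that ${}^{\prime}W_\cdot\Omega^\bullet_R$, with structure maps inherited from $W_\cdot\Omega^\bullet_R$, is again a Witt complex over $R$ and is initial; the substantive work is checking that the filtration is stable under the product, $d$, $F$, and $V$ so that everything descends to the quotient. The natural map ${}^{\prime}W_n\Omega^d_R \to W_n\Omega^d_R$ is induced by restriction, and initiality forces it to be an isomorphism --- which is exactly the assertion that the kernel of restriction equals $V^n(\Omega^d_R) + dV^n(\Omega^{d-1}_R)$. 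The conceptual move you are missing is to turn the question around: rather than trying to show that a kernel element lies in the filtration (hard, because presentations are not unique), build a quotient Witt complex out of the filtration and let the universal property do the work.
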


The difficult part is the \emph{only if} direction.  See \cite[Lemma~3.2.4]{HM03} for a proof in terms of the log de\thinspace Rham-Witt complex.  We recall the idea of that proof.  (See also the proof of Proposition~\ref{Witt complex over B} below for similar arguments.)  For every $n, d$, define
\[
^{\prime} W_n \Omega^d_R := W_{n+1}\Omega^d_R / \big( V^n(\Omega^d_R) + dV^n(\Omega^{d-1}_R) \big). 
\]
One then shows that $^{\prime} W_{\cdot} \Omega^{\bullet}_R$ is an initial object in the category of Witt complexes over $R$, and hence in particular that the natural map 
\begin{equation} \label{prime W}
^{\prime} W_n \Omega^d_R \rightarrow W_n \Omega^d_R
\end{equation}
is an isomorphism.  That natural map is induced by restriction $W_{n+1}\Omega^d_R \rightarrow W_n \Omega^d_R$, so our proposition follows from the injectivity of the map in Equation~(\ref{prime W}).

The following results we recall from \cite{HM04} have significantly easier proofs than the previous results we have cited; the proofs of the relations in Proposition~\ref{Witt complex relations} below are just a few lines of computation.

\begin{proposition}[{\cite[Lemma~1.2.1]{HM04}}]  \label{Witt complex relations} Again let $R$ denote a $\ZZ_{(p)}$-algebra, where $p$ is an odd prime.  The following equalities hold in every Witt complex over $R$:
\[
dF = pFd, \qquad Vd = pdV, \qquad V(x_0 dx_1 \cdots dx_m) = V(x_0) dV(x_1) \cdots dV(x_m).
\]

\end{proposition}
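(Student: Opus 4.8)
The plan is to deduce all three identities formally from the structure available in any Witt complex: the relations $FV = p$ and $F\,dV = d$; the Leibniz rule for the differential $d$ on the graded ring $E^\bullet_\cdot$; the fact that $F$ is a homomorphism of graded rings; and the fact that $V$ is a map of $E^*_\cdot$-modules, which unwinds to $V(\eta)\,\omega = V\!\bigl(\eta\,F(\omega)\bigr)$ and $\omega\,V(\eta) = V\!\bigl(F(\omega)\,\eta\bigr)$ for $\omega \in E^\bullet_\cdot$ and $\eta \in E^\bullet_{\cdot-1}$. Taking $\eta = 1$ in the latter gives the handy identity $V\!F(\omega) = \omega\,V(1)$. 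I will also use the two evaluations of the axioms at the unit: $F V(1) = p$ (from $FV = p$) and $F\,dV(1) = d(1) = 0$ (from $F\,dV = d$, together with $d(1)=0$ in any differential graded ring).

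For $dF = pFd$: given a homogeneous $\omega \in E^d_n$, I start from $V\!F(\omega) = \omega\,V(1)$, apply $d$ and Leibniz to get $d\bigl(V\!F(\omega)\bigr) = (d\omega)\,V(1) + (-1)^d\,\omega\,dV(1)$, and then apply $F$. On the left-hand side, $F\,dV\bigl(F\omega\bigr) = d(F\omega)$ by $F\,dV = d$. On the right-hand side, multiplicativity of $F$ together with $F V(1) = p$ and $F\,dV(1) = 0$ collapses the expression to $p\,Fd\omega$. Equating the two sides gives $dF\omega = pFd\omega$.

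For $Vd = pdV$: fix a homogeneous $\eta \in E^d_{n-1}$. First, $V(d\eta) = V\!F\bigl(dV\eta\bigr) = (dV\eta)\,V(1)$, using $F\,dV = d$ and then $V\!F(\cdot) = (\cdot)\,V(1)$. Second, from $V(\eta)\,V(1) = V\!\bigl(\eta\,F V(1)\bigr) = p\,V(\eta)$ I apply $d$ and Leibniz to get $p\,dV\eta = (dV\eta)\,V(1) + (-1)^d\,V(\eta)\,dV(1)$. Combining the two computations reduces the claim to showing that the correction term $V(\eta)\,dV(1)$ vanishes, and this is the one step that takes a moment's thought and is thus the main (if modest) obstacle: by the module identity, $V(\eta)\,dV(1) = V\!\bigl(\eta\,F(dV(1))\bigr) = V(\eta\cdot 0) = 0$, precisely because $F\,dV(1) = 0$. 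Hence $p\,dV\eta = V(d\eta)$.

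For the last relation $V(x_0\,dx_1\cdots dx_m) = V(x_0)\,dV(x_1)\cdots dV(x_m)$, I argue by induction on $m$, the case $m = 0$ being trivial. For the inductive step I split off the last factor and apply the inductive hypothesis to the first $m-1$ differentials, obtaining $V(x_0)\,dV(x_1)\cdots dV(x_m) = \bigl(V(x_0\,dx_1\cdots dx_{m-1})\bigr)\,dV(x_m)$. Applying the module identity $V(\eta)\,\omega = V\!\bigl(\eta\,F(\omega)\bigr)$ with $\omega = dV(x_m)$ and then $F\,dV = d$ converts this into $V\!\bigl(x_0\,dx_1\cdots dx_{m-1}\cdot dx_m\bigr)$, closing the induction. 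This last argument is purely formal once the module axiom and $F\,dV = d$ are in hand, and it uses neither the Leibniz rule nor the first two identities, so there is no circularity among the three proofs.
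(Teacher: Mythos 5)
Your proof is correct, and it is essentially the standard short computation cited from \cite[Lemma~1.2.1]{HM04} (the paper itself gives no proof, only the citation). In particular, you correctly identify the key (and slightly non-obvious) observation that $V(\eta)\,dV(1) = V\bigl(\eta\,F dV(1)\bigr) = 0$ because $FdV(1) = d(1) = 0$, which is what makes the $Vd = pdV$ identity go through; the other two identities follow from the same pattern of feeding $FV = p$ and $FdV = d$ into the Leibniz rule and the $V$-projection formula.
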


\section{Results on $W(A)$ and $\Omega^1_A$ when $A = W(k)$} \label{results A}

Let $k$ denote a perfect ring of odd characteristic~$p$ and let $A = W(k)$.  In this paper, we study the de\thinspace Rham-Witt complex over $A$.  In this section, we prove several preliminary results about the degree~zero case, $W(A)$, and the level~one case, $\Omega^1_A$.   Special thanks are due to Bhargav Bhatt and Lars Hesselholt for their assistance with the $\Omega^1_A$ proofs.  

The following result allows us to view the ring $W(A)$ as an $A$-algebra.  This is a key fact.  This is also a similarity between the case $A = W(k)$ and the case $A = \ZZ_{(p)}$, after which our results are modeled: the ring $W(A)$ is an $A$-algebra and the ring $W(\ZZ_{(p)})$ is a $\ZZ_{(p)}$-algebra.  This is also the main reason our methods don't easily translate to more general rings such as ramified extensions of $\ZZ_p$.  

Recall that, to avoid confusion, we write the Witt vector Frobenius differently on $A = W(k)$ from how we write it on $W(A) = W(W(k))$: we write $\varphi: A \rightarrow A$ and $F: W(A) \rightarrow W(A)$ for these Witt vector Frobenius maps.  The map $\varphi$ is a ring isomorphism, but the map $F$ is not an isomorphism.

\begin{proposition}[{\cite[(0.1.3.16)]{Ill79}}] \label{sF map}
Let $k$ denote a perfect ring of characteristic~$p$, let $A = W(k)$, and let $\varphi: A \rightarrow A$ denote the Witt vector Frobenius.  Then there is a unique ring homomorphism 
\[
s_{\varphi}: A \rightarrow W(A)
\]
satisfying $F \circ s_{\varphi} = s_{\varphi} \circ \varphi$ and such that for all $a \in A$, the ghost components of $s_{\varphi}(a)$ are $(a, \varphi(a), \varphi^2(a), \ldots)$.  
\end{proposition}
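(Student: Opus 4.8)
The plan is to construct $s_\varphi$ directly via ghost components and then check that the purported ghost vector actually lands in the image of the ghost map for $W(A)$. Recall that the ghost map $w \colon W(A) \to A^{\NN}$, $w(a_0, a_1, \dots) = (a_0,\ a_0^p + p a_1,\ \dots)$, is injective because $A = W(k)$ is $p$-torsion free (it is a torsion-free $\ZZ_p$-algebra). So the uniqueness assertion is immediate: any two ring homomorphisms with the prescribed ghost components agree. The content is therefore \emph{existence}, i.e.\ that the sequence $\gamma(a) := (a, \varphi(a), \varphi^2(a), \dots)$ is a ghost vector of an \emph{honest} Witt vector for every $a \in A$, and that the resulting map is a ring homomorphism.

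First I would verify that $a \mapsto \gamma(a)$ is a ring homomorphism $A \to A^{\NN}$: this is clear since each $\varphi^i$ is a ring homomorphism and the operations on $A^{\NN}$ are componentwise. Next, the key step: I claim the ghost map $w \colon W(A) \to A^{\NN}$ has image exactly the set of sequences $(x_0, x_1, \dots)$ satisfying the congruences $x_n \equiv x_{n-1}^{\,?} \pmod{p^n}$ — more precisely, the Dwork-type criterion: a sequence $(x_n)$ lies in the image of $w$ if and only if $x_n \equiv \psi(x_{n-1}) \pmod{p^n}$ for all $n \geq 1$, where $\psi \colon A \to A$ is \emph{any} ring homomorphism lifting the Frobenius on $A/pA = k$ (Dwork's lemma; see \cite[Ill79]{Ill79} or the treatment in \cite[\S1]{Hes15}). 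On $A = W(k)$ the Witt vector Frobenius $\varphi$ is precisely such a lift of the absolute Frobenius on the perfect ring $k$. So I must check $\varphi^n(a) \equiv \varphi\bigl(\varphi^{n-1}(a)\bigr) \pmod{p^n}$, i.e.\ $\varphi^n(a) \equiv \varphi^n(a) \pmod{p^n}$, which is trivially true. Hence $\gamma(a)$ is a ghost vector, giving a well-defined map of sets $s_\varphi \colon A \to W(A)$ with $w \circ s_\varphi = \gamma$; since $w$ is an injective ring homomorphism and $\gamma$ is a ring homomorphism, $s_\varphi$ is a ring homomorphism.

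Finally, the compatibility $F \circ s_\varphi = s_\varphi \circ \varphi$: apply the injective map $w$ to both sides. On ghost components, $F$ acts as the shift $(x_0, x_1, x_2, \dots) \mapsto (x_1, x_2, \dots)$, so $w(F s_\varphi(a)) = (\varphi(a), \varphi^2(a), \dots)$, while $w(s_\varphi(\varphi(a))) = (\varphi(a), \varphi^2(a), \dots)$ by definition of $\gamma$. These agree, so $F s_\varphi = s_\varphi \varphi$ by injectivity of $w$. The main obstacle is really just invoking Dwork's lemma correctly — verifying that $\varphi$ reduces to the Frobenius on $k$ so that the integrality congruences are automatic; everything else is a formal consequence of injectivity of the ghost map on the $p$-torsion-free ring $A$. (This is \cite[(0.1.3.16)]{Ill79}, so I would likely just cite it, but the above is the argument.)
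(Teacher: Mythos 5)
Your proof takes essentially the same route as the paper: both reduce to the Illusie/Dwork--Cartier criterion and identify that the substantive input is the fact that the Witt vector Frobenius $\varphi$ on $A = W(k)$ reduces to the $p$-power Frobenius on $A/pA = k$, which makes the integrality congruences trivial and gives $s_\varphi$ as a section of the first ghost component. The one small difference is that the paper does not merely assert $\varphi(a) \equiv a^p \pmod{pA}$ but includes the one-line verification (write $a = [r_0] + V(a_+)$, so $\varphi(a) = [r_0]^p + pa_+ \equiv ([r_0] + V(a_+))^p \pmod{pW(R)}$ since $V(x)V(y) = pV(xy)$); you state this as known, which is fine given the citation, but it is the crux and worth spelling out.
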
 

\begin{proof}
The ring $A$ is $p$-torsion free, so this result follows from \cite[(0.1.3.16)]{Ill79}, provided we know that the ring homomorphism $\varphi: A \rightarrow A$ satisfies $\varphi(a) \equiv a^p \bmod pA$ for all $a \in A$.  This last congruence is in fact true more generally for any ring $W(R)$ of $p$-typical Witt vectors.  We recall the short proof from \cite[Section 0.1.4]{Ill79}. For arbitrary $a \in W(R)$, write $a = [r_0] + V(a_+)$, where $r_0 \in R$ and $a_+ \in W(R)$.  We then have 
\begin{align*}
\varphi(a) &= [r_0]^p + pa_{+} \\
& \equiv [r_0]^p \bmod pW(R)\\
& \equiv \left( [r_0] + V(a_+) \right)^p \bmod pW(R),
\end{align*}
where the last congruence uses that $V(x) V(y) = pV(xy) \in pW(R)$ for Witt vectors $x,y \in W(R)$.
\end{proof}

\begin{lemma} \label{V-combination lemma}
For every $x \in W(A)$, there exist unique elements $a_0, a_1, \ldots \in A$ for which 
\begin{equation} \label{V formula}
x = \sum_{i = 0}^{\infty} s_{\varphi}(a_i) V^i(1) \in W(A).
\end{equation} 
\end{lemma}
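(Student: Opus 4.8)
The goal is to prove that every $x \in W(A)$ has a unique expansion $x = \sum_{i=0}^\infty s_\varphi(a_i) V^i(1)$ with $a_i \in A$. The natural approach is to use the standard Verschiebung filtration on $W(A)$: every Witt vector can be written as $[r_0] + V(x_+)$, and more generally, $W(A) = \invlim_n W_n(A)$ with $V^n W(A)$ as the kernel of the truncation $W(A) \to W_n(A)$. I would work level by level in this filtration, constructing the $a_i$ inductively and controlling convergence via $V$-adic completeness.

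**The key computation.** The crucial point is to understand $s_\varphi(a) \bmod V W(A)$. The first ghost component of $s_\varphi(a)$ is $a$ itself, and the truncation $W(A) \to W_1(A) = A$ sends a Witt vector to its first Witt component, which agrees with the first ghost component. Hence $s_\varphi(a) \equiv a \pmod{VW(A)}$ — that is, $s_\varphi(a)$ has first Witt component $a$. So given $x \in W(A)$ with first component $x_0$, setting $a_0 := x_0$ gives $x - s_\varphi(a_0) \in VW(A)$. Now write $x - s_\varphi(a_0) = V(y_1)$ for a unique $y_1 \in W(A)$ (using that $V$ is injective on $W(A)$, which holds since $A$ is $p$-torsion-free, or just because $V$ is always injective on $p$-typical Witt vectors). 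Apply the same procedure to $y_1$: find $a_1 \in A$ with $y_1 - s_\varphi(a_1) \in VW(A)$, so $y_1 = s_\varphi(a_1) + V(y_2)$, giving $x = s_\varphi(a_0) + V(s_\varphi(a_1)) + V^2(y_2) = s_\varphi(a_0) + s_\varphi(a_1)V(1) + V^2(y_2)$, where I use the projection-formula relation $V(s_\varphi(a_1) \cdot 1) = V(s_\varphi(a_1) \cdot F(1))$... more directly, $s_\varphi(a_1) V(1) = V(F(s_\varphi(a_1)) \cdot 1)$? No — I want the other direction: $V(z) = V(z \cdot 1)$ and since $1 = F(1)$, the module relation $V(F(\omega)\eta) = \omega V(\eta)$ with $\omega$ a Witt vector... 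Actually the cleanest identity is simply $s_\varphi(a) V^i(1) = V^i(F^i(s_\varphi(a)))$ combined with $F \circ s_\varphi = s_\varphi \circ \varphi$; but for the existence argument I only need $V^i(s_\varphi(a_i)) = V^i(s_\varphi(a_i) \cdot 1)$ rewritten using the $W(A)$-module structure of $V$ as... let me just say: iterating and using $V(w) = V(w)$ together with $V^i$ applied to $s_\varphi(a_i) + V(y_{i+1})$ telescopes to $x = \sum_{i=0}^{n-1} V^i(s_\varphi(a_i)) + V^n(y_n)$, and then one checks $V^i(s_\varphi(a_i)) = s_\varphi(a_i) V^i(1)$ via the Frobenius-Verschiebung projection formula $x V^i(y) = V^i(F^i(x) y)$ applied with $y = 1$, noting $1 = F^i(1)$ so $V^i(F^i(s_\varphi(a_i)) \cdot 1) = s_\varphi(a_i) V^i(1)$. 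Wait, that gives $s_\varphi(a_i) V^i(1) = V^i(F^i(s_\varphi(a_i)))$, not $V^i(s_\varphi(a_i))$ — so I should instead define $a_i$ using $F^i$, or equivalently replace $s_\varphi(a_i)$ by noting $F^i s_\varphi(a_i) = s_\varphi(\varphi^i(a_i))$ and reindex. The clean fix: choose $a_i$ so that $y_i - s_\varphi(\varphi^{-i}(\text{something}))$... Since $\varphi$ is an automorphism of $A = W(k)$ ($k$ perfect!), this reindexing is harmless. I would present it as: at stage $i$, pick $b_i \in A$ with $y_i \equiv s_\varphi(b_i) \pmod{VW(A)}$, then set $a_i = \varphi^{-i}(b_i)$, so that $V^i(s_\varphi(b_i)) = V^i(F^i s_\varphi(a_i)) = s_\varphi(a_i) V^i(1)$.

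**Convergence and uniqueness.** After $n$ steps, $x - \sum_{i=0}^{n-1} s_\varphi(a_i) V^i(1) = V^n(y_n) \in V^n W(A)$. Since $W(A) = \invlim_n W(A)/V^n W(A)$ is $V$-adically complete and separated, the partial sums converge and $x = \sum_{i=0}^\infty s_\varphi(a_i)V^i(1)$, proving existence. For uniqueness, suppose $\sum s_\varphi(a_i) V^i(1) = \sum s_\varphi(a_i') V^i(1)$. Looking modulo $VW(A)$ forces $a_0 = a_0'$ (since $s_\varphi(a) \equiv a$, and distinct $a$'s have distinct images in $W_1(A) = A$ — here again $A$ $p$-torsion-free or just that $s_\varphi$ followed by first-component projection is $\id_A$). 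Subtract, divide by $V$ (injective), and induct.

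**Main obstacle.** The genuinely delicate point — and the one I would be most careful about — is the interplay between $s_\varphi$, the Teichmüller-style first-component projection, and the Frobenius twist forced by moving $s_\varphi(a_i)$ past $V^i$. Getting the reindexing by powers of $\varphi$ right (which is exactly where perfectness of $k$ is used, so that $\varphi$ is invertible on $A$) is the subtle bookkeeping; everything else — injectivity of $V$, $V$-adic completeness of $W(A)$, and the fact that the first component of $s_\varphi(a)$ is $a$ — is standard. I would also double-check that $s_\varphi(a) V^i(1) \in V^i W(A)$ for $i \geq 1$ (clear, since $V^i(1) \in V^i W(A)$ and this is an ideal), which is what makes the filtration argument close up cleanly.
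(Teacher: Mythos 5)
Your proposal is correct and follows essentially the same route as the paper: both arguments hinge on rewriting $s_\varphi(a_i) V^i(1) = V^i(s_\varphi(\varphi^i(a_i)))$, then invoking that $\varphi$ is an automorphism of $A$ and that the first Witt component of $s_\varphi(a)$ is $a$. You have simply unpacked the paper's terse "the result now follows" into an explicit level-by-level construction using the $V$-adic filtration, which is exactly what is implicitly meant.
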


\begin{proof}
We have 
\[
\sum_{i = 0}^{\infty}  s_\varphi(a_i) V^i(1) = \sum_{i = 0}^{\infty}  V^i (F^i (s_\varphi(a_i) )) = \sum_{i = 0}^{\infty}  V^i (s_\varphi(\varphi^i(a_i)) ),
\]
so the result now follows from the fact that $\varphi: A \rightarrow A$ is an isomorphism and that the first component of $s_{\varphi}(a) \in W(A)$ is $a$.  
\end{proof} 

\begin{lemma}
If $x \in W(A)$ is given as in Equation~(\ref{V formula}), then 
\[
V(x) = \sum_{i = 1}^{\infty} s_{\varphi}(\varphi^{-1}(a_{i-1})) V^i(1) \in W(A).
\]
\end{lemma}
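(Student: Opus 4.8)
The plan is to reduce everything to the single identity
\[
s_{\varphi}(a)\, V^i(1) = V^i\bigl(s_{\varphi}(\varphi^i(a))\bigr) \in W(A),
\]
which already appeared (for $a = a_i$) in the proof of Lemma~\ref{V-combination lemma}: by the projection formula $y\cdot V^i(1) = V^i(F^i(y))$ in $W(A)$ we get $s_{\varphi}(a)V^i(1) = V^i(F^i(s_{\varphi}(a)))$, and $F^i \circ s_{\varphi} = s_{\varphi}\circ\varphi^i$ by Proposition~\ref{sF map}. So the whole computation is just two applications of this identity together with the fact that $\varphi$ is an automorphism of $A$.

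First I would observe that the series defining $x$ converges $V$-adically in $W(A)$, since its $i$-th term lies in $V^iW(A)$, and that $V\colon W(A)\to W(A)$ is additive and continuous for the $V$-adic topology; hence $V(x) = \sum_{i=0}^{\infty} V\bigl(s_{\varphi}(a_i)V^i(1)\bigr)$. Applying the identity above and then $V$ once more, the $i$-th term becomes
\[
V\bigl(s_{\varphi}(a_i)V^i(1)\bigr) = V\bigl(V^i(s_{\varphi}(\varphi^i(a_i)))\bigr) = V^{i+1}\bigl(s_{\varphi}(\varphi^i(a_i))\bigr).
\]
Since $\varphi$ is an automorphism of $A$, we may write $\varphi^i(a_i) = \varphi^{i+1}\bigl(\varphi^{-1}(a_i)\bigr)$, so the same identity applied with $a = \varphi^{-1}(a_i)$ and exponent $i+1$ gives
\[
V^{i+1}\bigl(s_{\varphi}(\varphi^i(a_i))\bigr) = V^{i+1}\bigl(s_{\varphi}(\varphi^{i+1}(\varphi^{-1}(a_i)))\bigr) = s_{\varphi}\bigl(\varphi^{-1}(a_i)\bigr)\, V^{i+1}(1).
\]
Summing over $i \geq 0$ and reindexing with $j = i+1$ then yields $V(x) = \sum_{j=1}^{\infty} s_{\varphi}(\varphi^{-1}(a_{j-1}))\,V^j(1)$, which is the assertion.

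There is essentially no hard step here; the only point that deserves a word of care is the interchange of $V$ with the infinite sum, which is justified by the $V$-adic continuity of $V$ noted above. If one prefers to avoid even that, one can instead carry out the identical computation at each finite level $W_n(A)$, where the sum is a finite sum of $n$ terms, and then pass to the inverse limit.
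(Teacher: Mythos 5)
Your proof is correct and uses essentially the same idea as the paper, namely the identity $V(F(z)y) = zV(y)$ (your projection formula $y\,V^i(1) = V^i(F^i(y))$ is an iterate of it) together with $F\circ s_{\varphi} = s_{\varphi}\circ\varphi$ and the invertibility of $\varphi$. You route through $V^{i+1}\bigl(s_{\varphi}(\varphi^i(a_i))\bigr)$ as an intermediate step and also spell out the $V$-adic continuity justifying the termwise computation, but the content matches the paper's one-line argument.
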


\begin{proof}
This follows from the formula $V(F(x)y) = x V(y)$, for $x,y \in W(A)$ and from the fact that $F(s_{\varphi}(a_i)) = s_{\varphi}(\varphi(a_i))$.  
\end{proof}

The following result gives explicit formulas for the elements $a_i \in A$ appearing in Equation~(\ref{V formula}) in the specific case that $x$ is a Teichm\"uller lift of some element $a \in A$.  The main technical difficulty of this paper involves studying congruences involving these coefficients.  

\begin{lemma} \label{Teich lemma}
In the specific case $x = [a] \in W(A)$ is the Teichm\"uller lift of an element $a \in A$, then the terms $a_i$ from Equation~(\ref{V formula}) are given by the formulas
 $a_0 = a$ and $a_i = \varphi^{-i}\left( \frac{a^{p^i} - \left(\varphi(a)\right)^{p^{i-1}}}{p^i}\right) $ for $i \geq 1$.  
\end{lemma}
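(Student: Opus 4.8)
The plan is to determine the coefficients $a_i$ by comparing ghost components. By Proposition~\ref{sF map}, the ghost components of $s_\varphi(b)$ are $(b, \varphi(b), \varphi^2(b), \ldots)$, and the ghost components of $V^i(1)$ are $(0, \ldots, 0, p^i, p^i, \ldots)$ with the first nonzero entry in position $i$ (indexing ghost components from $0$). Hence the $m$-th ghost component of $s_\varphi(a_i) V^i(1)$ is $0$ for $m < i$ and $p^i \varphi^m(a_i)$ for $m \geq i$. Meanwhile the $m$-th ghost component of $[a]$ is $a^{p^m}$. So Equation~(\ref{V formula}) applied to $x = [a]$ is equivalent to the system of equations
\[
a^{p^m} = \sum_{i=0}^{m} p^i \varphi^m(a_i), \qquad m = 0, 1, 2, \ldots,
\]
in the $p$-torsion-free ring $A$. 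First I would solve this triangular system recursively: the $m=0$ equation gives $a_0 = a$; subtracting the $m=(i-1)$ equation (with $\varphi$ applied) from the $m=i$ equation isolates $p^i \varphi^i(a_i)$.

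Concretely, applying $\varphi$ to the equation for $m = i-1$ gives $a^{p^{i-1}}$ raised appropriately: $\varphi(a^{p^{i-1}}) = (\varphi(a))^{p^{i-1}} = \sum_{j=0}^{i-1} p^j \varphi^i(a_j)$. Subtracting this from the equation for $m = i$, namely $a^{p^i} = \sum_{j=0}^{i} p^j \varphi^i(a_j)$, cancels all terms with $j \leq i-1$ and leaves
\[
a^{p^i} - (\varphi(a))^{p^{i-1}} = p^i \varphi^i(a_i).
\]
Since $A$ is $p$-torsion free and $\varphi$ is an automorphism of $A$, this yields $a_i = \varphi^{-i}\!\left( \dfrac{a^{p^i} - (\varphi(a))^{p^{i-1}}}{p^i} \right)$, once I check the numerator is genuinely divisible by $p^i$ in $A$. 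Uniqueness of the $a_i$ is already guaranteed by Lemma~\ref{V-combination lemma}, so it suffices to exhibit these as a solution.

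The main obstacle is the divisibility claim: that $a^{p^i} - (\varphi(a))^{p^{i-1}}$ is divisible by $p^i$ in $A = W(k)$. One clean way to see this is not to prove the divisibility directly from the formula but to derive it a posteriori: Lemma~\ref{V-combination lemma} supplies elements $b_i \in A$ with $[a] = \sum_i s_\varphi(b_i) V^i(1)$, and comparing ghost components shows the $b_i$ satisfy exactly the triangular system above; solving that system forces $p^i \varphi^i(b_i) = a^{p^i} - (\varphi(a))^{p^{i-1}}$, which proves the divisibility and simultaneously identifies $b_i = a_i$ with the asserted closed form. Alternatively, one can prove the divisibility head-on: working modulo $p$, Proposition~\ref{sF map}'s proof shows $\varphi(a) \equiv a^p$, so $(\varphi(a))^{p^{i-1}} \equiv a^{p^i} \bmod pA$, giving divisibility by $p$; boosting this to divisibility by $p^i$ requires the standard Witt-vector congruence $\varphi(a) \equiv a^p \bmod pA \Rightarrow \varphi^j(a^{p^{i-j-1}}) \equiv a^{p^{i-j}} \bmod p^{j+1}A$ type estimate, i.e. a telescoping argument $a^{p^i} - (\varphi(a))^{p^{i-1}} = \sum_{j} \big( \varphi^j(a)^{p^{i-1-j}}\cdot(\text{stuff}) \big)$ controlling each successive difference $\varphi^{j}(a)^{p^{i-1-j}} - \varphi^{j+1}(a)^{p^{i-2-j}}$, each of which is divisible by one more power of $p$ by the lemma that $x \equiv y \bmod p^s A \Rightarrow x^p \equiv y^p \bmod p^{s+1} A$. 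I expect the ghost-component derivation to be the shortest route, so that is the one I would write up.
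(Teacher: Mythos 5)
Your proof is correct and follows essentially the same route as the paper's: compare ghost components of both sides of Equation~(\ref{V formula}), note the triangular structure of the resulting system, and solve recursively (the paper phrases this as induction on $i$, and records the stabilization pattern of ghost components of finite sums to make the bookkeeping transparent). Your observation that the required $p^i$-divisibility follows a posteriori from the existence guaranteed by Lemma~\ref{V-combination lemma} together with $p$-torsion-freeness of $A$ is exactly the clean way to close the argument, and matches the paper's parenthetical remark about injectivity of the ghost map.
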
 

\begin{proof}
This follows using induction on $i$, by comparing the ghost components of the two sides of Equation~(\ref{V formula}).  (Notice that the ghost map is injective because $A$ is $p$-torsion free.)  To simplify the proof, notice that a finite sum
\[
s_{\varphi}(a_0) + s_{\varphi}(a_1) V(1) + \cdots + s_{\varphi}(a_n) V^n(1),
\]
has ghost components which stabilize in the following pattern 
\[
(w_0, \ldots, w_{n-1}, w_n, \varphi(w_n), \varphi^2(w_n), \ldots ).  
\]
\end{proof}

When we define our Witt complex $E_{\cdot}^{\bullet}$ in Section~\ref{E section}, we will express $E_n^1$ in terms of quotients $A/p^i A$.  The groups $E_n^1$ in a Witt complex over $A$ always possess a $W_n(A)$-module structure, and the following lemma describes the $W_n(A)$-module structure we put on $A/p^i A$; notice that this module structure is not the one induced by the obvious projection map $W_n(A) \rightarrow A$.  

\begin{lemma} \label{module structure En}
Let $n, i \geq 1$ be integers and consider the map $W_n(A) \rightarrow A/p^i A$ given by 
\[
\sum_{j = 0}^{n-1} s_{\varphi}(a_j) V^j(1) \mapsto  \sum_{j = 0}^{n-1} a_j p^j.
\]
This is a surjective ring homomorphism with kernel the ideal in $W_n(A)$ generated by 
\[
\{ p^i, V^j(1) - p^j \mid 0 \leq j \leq n-1\}.
\] 
\end{lemma}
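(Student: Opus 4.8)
The plan is to verify directly that the displayed map, call it $\psi: W_n(A) \to A/p^iA$, is a ring homomorphism, then compute its kernel. By Lemma~\ref{V-combination lemma} every element of $W_n(A)$ is uniquely $\sum_{j=0}^{n-1} s_{\varphi}(a_j) V^j(1)$, so $\psi$ is well-defined as a set map. The fastest route to multiplicativity is to observe that $\psi$ factors through the composite $W_n(A) \twoheadrightarrow W_n(A) \xrightarrow{\ \bar s\ } ?$; more precisely, consider the ring homomorphism $\theta: W_n(A) \to A$ that exists because $W_n(A)$ is the $n$-truncated Witt ring — but that is the \emph{wrong} map (it sends $V^j(1)$ to $0$, not $p^j$). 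Instead, I would use the $A$-algebra structure on $W_n(A)$ coming from $s_{\varphi}$ (Proposition~\ref{sF map}): the map $s_{\varphi}: A \to W_n(A)$ makes $W_n(A)$ an $A$-algebra, and $\psi$ is visibly $A$-linear and sends $1 \mapsto 1$, so it suffices to check $\psi$ respects products of elements of the form $s_{\varphi}(a) V^j(1)$. Using $V^{j}(1) V^{j'}(1) = p^{\min(j,j')} V^{\max(j,j')}(1)$ together with $F s_{\varphi} = s_{\varphi}\varphi$ and $V(F(x)y) = xV(y)$, one reduces to the identity $p^{\min(j,j')}\cdot p^{\max(j,j')} = p^j p^{j'}$ in $A/p^iA$ (after shifting the $s_{\varphi}$-coefficients by powers of $\varphi^{\pm1}$, which does not affect the image since $\varphi$ reduces mod $p$ to Frobenius and we only care about the sum $\sum a_j p^j$ — here one must be slightly careful and instead track exact ghost components or argue $p$-adically). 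Surjectivity is immediate: $\psi(s_{\varphi}(a)) = a + p^iA$.

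For the kernel, first note each listed generator lies in $\ker\psi$: clearly $\psi(p^i) = 0$, and $\psi(V^j(1) - p^j) = p^j - p^j = 0$ for $1 \le j \le n-1$. Let $I \subseteq W_n(A)$ be the ideal they generate; then $\psi$ factors through $W_n(A)/I$, and I must show the induced map is injective. The key computation is that $W_n(A)/I$ is generated as an $A$-module (via $s_{\varphi}$) by the image of $1$: given any $x = \sum_{j=0}^{n-1} s_{\varphi}(a_j)V^j(1)$, modulo $I$ we may replace each $V^j(1)$ by $p^j$, so $x \equiv s_{\varphi}\!\big(\sum_j a_j p^j\big) \pmod I$ — using that $s_{\varphi}$ is a ring homomorphism and $s_{\varphi}(p) = p$. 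Hence $W_n(A)/I$ is a cyclic $A$-module $A/J$ for some ideal $J$, and since $p^i \in I$ we get $J \supseteq p^iA$, i.e. $W_n(A)/I$ is a quotient of $A/p^iA$; but $\psi$ composed with $A \to A/p^iA \twoheadrightarrow W_n(A)/I \to A/p^iA$ is the identity on $A/p^iA$, forcing $J = p^iA$ and the induced map $W_n(A)/I \xrightarrow{\sim} A/p^iA$ to be an isomorphism.

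The step I expect to be the main obstacle is the clean verification of multiplicativity of $\psi$ — specifically, checking that the formula $\sum_j s_{\varphi}(a_j)V^j(1) \mapsto \sum_j a_j p^j \bmod p^i$ is compatible with products, since the product of two Witt vectors written in the $s_{\varphi}$-$V$-basis involves the somewhat intricate coefficient formulas (the $\varphi^{\pm1}$-twists appearing in the lemma preceding this one, and in Lemma~\ref{Teich lemma}). The most robust way to finish this is probably to avoid the explicit product formula altogether: define $\tilde\psi: W_n(A) \to A/p^iA$ as the reduction mod $p^i$ of the \emph{ghost-like} map $x \mapsto \sum_{j} a_j p^j$ and identify it with the composite $W_n(A) \xrightarrow{s_{\varphi}\text{-linear section?}} \cdots$ — or, cleanest of all, note that $A/p^iA$ with its $A$-algebra structure is an object of a category in which $W_n(A)$'s defining universal property (as truncated Witt vectors, or via the $A$-algebra generators-and-relations presentation) applies, and read off $\psi$ as the unique $A$-algebra map determined by where it sends $V^j(1)$. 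I would first attempt the direct product computation; if the bookkeeping with $\varphi$-twists becomes unwieldy, I would switch to the universal-property argument, which makes both multiplicativity and the kernel computation formal.
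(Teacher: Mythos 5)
Your proposal is essentially correct and takes the same route as the paper: for multiplicativity the paper likewise reduces to the identity $V^j(1)\,V^{j'}(1)=p^{\min(j,j')}V^{\max(j,j')}(1)$, and for the kernel the paper writes $p^i a=\sum a_jp^j$, applies $s_{\varphi}$, and expresses the element directly as a combination of the generators --- which is just the direct form of your ``replace $V^j(1)$ by $p^j$ mod $I$'' observation.

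One point worth settling, since you hedge on it: the $\varphi$-twists you worry about do \emph{not} arise in the product check. They appear only when moving an $s_{\varphi}$-coefficient across a $V$ (as in the lemma preceding this one), but here the coefficients multiply directly:
\[
\bigl(s_{\varphi}(a)V^j(1)\bigr)\bigl(s_{\varphi}(b)V^{j'}(1)\bigr)
= s_{\varphi}(ab)\,V^j(1)V^{j'}(1)
= s_{\varphi}(ab)\,p^{\min(j,j')}V^{\max(j,j')}(1),
\]
and the right-hand side is already in the canonical $\sum_m s_{\varphi}(c_m)V^m(1)$ form. Applying $\psi$ gives $ab\,p^{j+j'}$, which is exactly $\psi$ of the first factor times $\psi$ of the second, so multiplicativity is immediate and bilinear over $s_{\varphi}(A)$. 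The fallback to a universal-property argument is unnecessary.
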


\begin{proof}
If we view $W_n(A)$ as an $A$-module via $s_{\varphi}$, then it's clear that the map is a surjective $A$-module homomorphism.  To prove it's a ring homomorphism, we use the formula $V^j(1) V^i(1) = p^j V^i(1)$ for $j \leq i$.

We now prove the statement about the kernel.  Clearly the proposed elements are in the kernel; we now show an arbitrary element in the kernel is generated by the proposed elements.  Assume $ \sum_{j = 0}^{n-1} s_{\varphi}(a_j) V^j(1) $ is in the kernel.  This means that there exists $a \in A$ such that
\begin{align*}
p^ia &= \sum_{j = 0}^{n-1} a_j p^j \in A. \\
\intertext{Applying $s_{\varphi}$ to both sides, we find }
p^is_{\varphi}(a) &= \sum_{j = 0}^{n-1} s_{\varphi}(a_j) p^j \in W_n(A), \\
\intertext{and thus }
p^is_{\varphi}(a) + \sum_{j = 0}^{n-1} s_{\varphi}(a_j) (V^j(1) - p^j) &= \sum_{j = 0}^{n-1} s_{\varphi}(a_j) V^j(1),
\end{align*}
which completes the proof.
\end{proof}

This concludes our collection of preliminary results on Witt vectors over $A = W(k)$.  We now turn our attention to $\Omega^1_{W(k)}$.  We thank Bhargav Bhatt and Lars Hesselholt for their help with the remainder of this section.  Our first result, Proposition~\ref{Omega bijection}, is the most important.  It says that multiplication by $p$ is bijective on $\Omega^1_{W(k)}$; we will use this result repeatedly.  By contrast, the results from Proposition~\ref{de Rham field} to the end of this section are closer to ``reality-checks".  For example, Corollary~\ref{de Rham non-zero} below shows that $\Omega^1_{W(k)}$ is not the zero-module.  

\begin{proposition} \label{Omega bijection}
Let $k$ denote a perfect ring of characteristic~$p$.  Then multiplication by $p$: $\Omega^1_{W(k)} \rightarrow \Omega^1_{W(k)}$ is a bijection.  
\end{proposition}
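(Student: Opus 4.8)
The plan is to show that $\Omega^1_{W(k)}$, when viewed as a module over $A = W(k)$, has the property that multiplication by $p$ is injective and surjective. Surjectivity is the easier of the two, so I would dispatch it first; injectivity is where the cotangent complex enters, and that is the main obstacle.

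\textbf{Surjectivity.} First I would reduce to showing that $d a$ lies in the image of multiplication by $p$ for every $a \in A$, since such elements generate $\Omega^1_A$ as an $A$-module and the image of multiplication by $p$ is a submodule. Now use that the Witt vector Frobenius $\varphi \colon A \to A$ is an isomorphism (since $k$ is perfect) and that $\varphi(a) \equiv a^p \bmod pA$ (established in the proof of Proposition~\ref{sF map}). Writing $a = \varphi(b)$ for a unique $b \in A$ and $\varphi(b) = b^p + pc$ for some $c \in A$, we get $da = d\varphi(b) = p b^{p-1} db + p\, dc = p\big(b^{p-1} db + dc\big)$, which exhibits $da$ as $p$ times an element of $\Omega^1_A$. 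Hence multiplication by $p$ is surjective.

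\textbf{Injectivity.} The hard part is to show there is no nonzero $p$-torsion in $\Omega^1_A$. I would argue that the obstruction to $p$ being injective on $\Omega^1_A$ is controlled by $H_1$ of the (derived) mod-$p$ reduction of the cotangent complex. Concretely, the multiplication-by-$p$ sequence $0 \to A \xrightarrow{p} A \to k \to 0$ (using that $A/pA = k$) gives, after tensoring the cotangent complex $L_{A/\ZZ}$ with it, an exact triangle whose long exact homology sequence reads in part
\[
H_1\big(L_{A/\ZZ} \otimes^{\mathbf L} k\big) \to \Omega^1_A \xrightarrow{\,p\,} \Omega^1_A \to \Omega^1_A \otimes_A k \to 0.
\]
So it suffices to show $H_1(L_{A/\ZZ} \otimes^{\mathbf L} k) = 0$. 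Here I would use base change for the cotangent complex along $\ZZ \to A \to k$: the transitivity triangle $L_{A/\ZZ}\otimes^{\mathbf L}_A k \to L_{k/\ZZ} \to L_{k/A}$ together with the fact that $L_{k/A} \simeq k[1]$ (since $A = W(k) \to k$ is a surjection with kernel $(p)$ generated by the nonzerodivisor $p$, so its cotangent complex is $(p)/(p^2)[1] \cong k[1]$), plus the fact that $L_{k/\ZZ} = L_{k/\FF_p}$ has vanishing $H_1$ because $k$ is a perfect $\FF_p$-algebra (for a perfect ring of characteristic $p$, the cotangent complex over $\FF_p$ — indeed over $\ZZ$ — is acyclic). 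Running the long exact sequence of the transitivity triangle in low degrees then forces $H_1(L_{A/\ZZ}\otimes^{\mathbf L}_A k) = 0$, which is exactly what was needed.

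\textbf{What I expect to be delicate.} The genuinely nontrivial input is the vanishing of the cotangent complex of a perfect $\FF_p$-algebra — this is the key lemma and I would cite it (it is standard, e.g. via the fact that Frobenius is an isomorphism forces $L_{k/\FF_p} \simeq L_{k/\FF_p}$ to be multiplication-by-$0$ on itself up to the $p$-power twist, hence acyclic after the usual argument). Everything else is formal manipulation of transitivity triangles, plus the elementary observation that $W(k) \twoheadrightarrow k$ has kernel generated by the single nonzerodivisor $p$, so its relative cotangent complex is concentrated in degree $1$ and free of rank one. I would present the injectivity argument in that order: (1) set up the multiplication-by-$p$ long exact sequence and reduce to $H_1(L_{A/\ZZ}\otimes^{\mathbf L} k) = 0$; (2) invoke $L_{A/k}$-shift description; (3) invoke acyclicity of $L_{k/\FF_p}$; (4) conclude via the transitivity triangle. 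Then combine with the surjectivity argument above.
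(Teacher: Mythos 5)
Your surjectivity argument is correct and is essentially the elementary argument the paper gives as a remark (write $a = \varphi(b)$, expand $\varphi(b) = b^p + pc$, differentiate).

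For injectivity, the set-up is fine — you correctly reduce to showing $H_1\bigl(L_{A/\ZZ}\otimes^{\mathbf L}_A k\bigr) = 0$ — and that vanishing is indeed the heart of the matter and is exactly what the paper establishes. But the step you use to get there contains a real error: $L_{k/\ZZ}$ is \emph{not} $L_{k/\FF_p}$, and it is \emph{not} acyclic. From the transitivity triangle for $\ZZ \to \FF_p \to k$ and the vanishing of $L_{k/\FF_p}$, one gets $L_{k/\ZZ} \simeq L_{\FF_p/\ZZ}\otimes^{\mathbf L}_{\FF_p} k \simeq k[1]$, so $H_1(L_{k/\ZZ}) \cong k \neq 0$; the parenthetical "indeed over $\ZZ$ — is acyclic" is false (already for $k = \FF_p$). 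As written, your transitivity sequence
\[
0 = H_2(L_{k/A}) \to H_1\bigl(L_{A/\ZZ}\otimes^{\mathbf L}_A k\bigr) \to H_1(L_{k/\ZZ}) \to H_1(L_{k/A})
\]
therefore does \emph{not} immediately give the vanishing you claim; you would additionally need to show that the last map $k \to k$ is injective (it is in fact an isomorphism, both terms being the conormal module $(p)/(p^2)\otimes k$ with the identity map), but that step is missing. The paper circumvents this entirely with a cleaner move: since $\ZZ \to W(k)$ is flat, derived base change along $\ZZ \to \FF_p$ gives directly $L_{W(k)/\ZZ}\otimes^{\mathbf L}_{\ZZ}\FF_p \cong L_{k/\FF_p} = 0$, and one concludes at once that multiplication by $p$ is a quasi-isomorphism on $L_{W(k)/\ZZ}$ and hence an isomorphism on $H^0 = \Omega^1_{W(k)}$ — no need to compute $L_{k/\ZZ}$ or to argue surjectivity separately. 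I'd recommend either patching your argument by proving the map $H_1(L_{k/\ZZ}) \to H_1(L_{k/A})$ is injective, or, better, replacing the whole injectivity step with the flat base change identification.
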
 

\begin{remark}
The proof below is due to Bhargav Bhatt.  The tools used in the proof (the cotangent complex and, more generally, the language of derived categories) do not appear elsewhere in this paper, so the reader (or author) who is not comfortable with them is advised to treat the proof of Proposition~\ref{Omega bijection} as a black box.  See also the proof of \cite[Lemma~2.2.4]{HM03} for a proof of a related result.

Before giving Bhatt's proof, we point out an elementary argument for surjectivity. The Witt vector Frobenius $\varphi: W(k) \rightarrow W(k)$ is surjective on one hand, and on the other hand, $\varphi(a) \equiv a^p \bmod pW(k)$ for every $a \in W(k)$.  So for every $a \in W(k)$, we can find $a_0, a_1 \in W(k)$ such that $a = a_0^p + pa_1$.  Thus every $da \in \Omega^1_{W(k)}$ is divisible by $p$, and hence multiplication by $p$ on $\Omega^1_{W(k)}$ is surjective.  We are not aware of a similarly elementary proof of injectivity.
\end{remark}

\begin{proof}
Let $L_{W(k)/\ZZ}$ denote the cotangent complex.  Because $\ZZ \rightarrow W(k)$ is flat, we have
\[
L_{W(k)/\ZZ} \otimes_{\ZZ}^{L} \FF_p \cong L_{k/\FF_p}  
\]
by \cite[\href{http://stacks.math.columbia.edu/tag/08QQ}{Tag 08QQ}]{stacks-project}.
The right-hand side is zero, because the Frobenius automorphism on $k$ induces a map on $L_{k/\FF_p}$ which is simultaneously zero and an isomorphism.  Thus the left-hand side is also 0.  This implies that multiplication by $p$ on $L_{W(k)/\ZZ}$ is a quasi-isomorphism.  In particular, multiplication by $p$ is an isomorphism on $H^0(L_{W(k)/\ZZ}) \cong \Omega^1_{W(k)}$, which completes the proof. 
\end{proof}

Throughout this paper, $k$ denotes a perfect \emph{ring} of characteristic~$p$.  We prove Corollary~\ref{de Rham non-zero} below for $W(k)$ by deducing it from Proposition~\ref{de Rham field}, which concerns the case of $W(k')$, where $k'$ is a perfect \emph{field} of characteristic~$p$.

\begin{proposition} \label{de Rham field}
Let $k'$ denote a perfect field of characteristic~$p$.  Let $\{x_{\alpha}\}_{\alpha \in A} \subseteq W(k')$ denote elements such that $\{ x_{\alpha} \}_{\alpha \in A}$ is a transcendence basis for $W(k')[1/p]$ over $\QQ$.  Then $\{ dx_{\alpha} \}_{\alpha \in A}$ is a basis for $\Omega^1_{W(k')}$ as a $W(k')[1/p]$-vector space.
\end{proposition}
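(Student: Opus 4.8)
The plan is to reduce the statement to a standard fact about K\"ahler differentials of a field extension in characteristic zero. Write $K = W(k')[1/p] = \Frac(W(k'))$, a field of characteristic zero, and recall that by Proposition~\ref{Omega bijection} multiplication by $p$ is bijective on $\Omega^1_{W(k')}$; hence $\Omega^1_{W(k')}$ is already a $K$-vector space, so the statement makes sense as written. Since $K$ is the localization of $W(k')$ at the multiplicative set $\{1, p, p^2, \ldots\}$, the localization formula for K\"ahler differentials gives $\Omega^1_{K/\ZZ} \cong \Omega^1_{W(k')} \otimes_{W(k')} K$, and the right-hand side is just $\Omega^1_{W(k')}$ since $p$ is already invertible on it. So $\Omega^1_{W(k')} \cong \Omega^1_{K/\ZZ}$, and this isomorphism sends $dx_\alpha$ to $dx_\alpha$.

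Next I would replace the base ring $\ZZ$ by $\QQ$. Because $\QQ$ is a localization of $\ZZ$ we have $\Omega^1_{\QQ/\ZZ} = 0$, so the first exact sequence attached to $\ZZ \to \QQ \to K$, namely $\Omega^1_{\QQ/\ZZ} \otimes_{\QQ} K \to \Omega^1_{K/\ZZ} \to \Omega^1_{K/\QQ} \to 0$, identifies $\Omega^1_{K/\ZZ}$ with $\Omega^1_{K/\QQ}$. Combining this with the previous step yields an isomorphism $\Omega^1_{W(k')} \cong \Omega^1_{K/\QQ}$ carrying each $dx_\alpha$ to $dx_\alpha$. It therefore suffices to show that $\{dx_\alpha\}_{\alpha \in A}$ is a $K$-basis of $\Omega^1_{K/\QQ}$.

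For this last step, set $F_0 = \QQ(\{x_\alpha\}_{\alpha \in A}) \subseteq K$, the purely transcendental subextension generated by the $x_\alpha$; then $\Omega^1_{F_0/\QQ}$ is a free $F_0$-module with basis $\{dx_\alpha\}$. By the definition of transcendence basis, $K$ is algebraic over $F_0$, hence separable over $F_0$ since $\Char K = 0$; consequently $\Omega^1_{K/F_0} = 0$ and the base-change map $\Omega^1_{F_0/\QQ} \otimes_{F_0} K \to \Omega^1_{K/\QQ}$ is an isomorphism. This is the standard description of the module of differentials of a separably generated field extension; see \cite[Section~26]{Mat89}. Hence $\{dx_\alpha\}$ is a $K$-basis of $\Omega^1_{K/\QQ}$, and therefore of $\Omega^1_{W(k')}$, as claimed.

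The manipulations with the first exact sequences and the localization formula are routine. The only point requiring real content is the final assertion that the base-change map $\Omega^1_{F_0/\QQ} \otimes_{F_0} K \to \Omega^1_{K/\QQ}$ is \emph{injective}; surjectivity is immediate from $\Omega^1_{K/F_0} = 0$, but injectivity is precisely where characteristic zero is used. If one prefers not to invoke \cite{Mat89}, this can be verified directly by showing that every $\QQ$-derivation of $F_0$ into a $K$-vector space extends over $K$, adjoining one algebraic element at a time and using that its minimal polynomial over the intermediate field is separable.
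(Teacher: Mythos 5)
Your proof is correct and takes essentially the same route as the paper: reduce via Proposition~\ref{Omega bijection} and localization to $\Omega^1_{K/\QQ}$ for $K = W(k')[1/p]$, then invoke the standard characteristic-zero fact (\cite[Section~26]{Mat89}) that a transcendence basis gives a $K$-basis of differentials. You spell out the intermediate isomorphisms and the justification for the final field-theory step more explicitly than the paper, but the argument is the same.
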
 

\begin{proof}
By Proposition~\ref{Omega bijection}, we have
\[
\Omega^1_{W(k')} \cong \Omega^1_{W(k')} \otimes_{W(k')} W(k')[1/p] \cong \Omega^1_{W(k')[1/p]} \cong \Omega^1_{W(k')[1/p]/\QQ}.
\]
Thus it suffices to prove that if $\{ x_{\alpha} \}_{\alpha \in A}$ is a transcendence basis for a field $K/\QQ$, then $\{ dx_{\alpha} \}_{\alpha \in A}$ is a $K$-basis for $\Omega^1_{K/\QQ}$.  The result now follows by \cite[Theorem~26.5]{Mat89}. 
\end{proof} 

\begin{corollary} \label{de Rham non-zero}
Let $k$ denote a perfect ring of characteristic~$p$.  Then the $W(k)$-module $\Omega^1_{W(k)}$ is non-zero.
\end{corollary}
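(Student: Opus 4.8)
The plan is to reduce to the case of a perfect \emph{field}, which is exactly what Proposition~\ref{de Rham field} handles, using functoriality of both the Witt vector construction and of K\"ahler differentials.

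First, since $k$ has characteristic~$p$ it is in particular a nonzero ring, so we may pick a maximal ideal $\mathfrak{m} \subseteq k$ and set $k' := k/\mathfrak{m}$, a field of characteristic~$p$. I would first check that $k'$ is perfect: the Frobenius endomorphism of $k'$ is injective because $k'$ is a field, and it is surjective because the Frobenius of $k$ is surjective (as $k$ is perfect) and the two Frobenius maps are compatible with the surjection $k \twoheadrightarrow k'$. Hence Frobenius is bijective on $k'$, so $k'$ is a perfect field of characteristic~$p$ and Proposition~\ref{de Rham field} applies to it.

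Next I would invoke the standard facts that $W(-)$ carries the surjection $k \twoheadrightarrow k'$ to a surjection of rings $W(k) \twoheadrightarrow W(k')$ (on underlying sets this is just the coordinatewise surjection $k^{\NN} \to (k')^{\NN}$), and that for any surjective ring homomorphism $R \twoheadrightarrow S$ the base-change map $\Omega^1_R \otimes_R S \to \Omega^1_S$ is surjective (because $\Omega^1_{S/R} = 0$ in the first fundamental exact sequence). Applying the latter with $R = W(k)$ and $S = W(k')$, we see that $\Omega^1_{W(k)} \otimes_{W(k)} W(k')$ surjects onto $\Omega^1_{W(k')}$; in particular, if $\Omega^1_{W(k)}$ vanished then so would $\Omega^1_{W(k')}$.

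It therefore remains only to observe that $\Omega^1_{W(k')} \neq 0$. By Proposition~\ref{de Rham field} this amounts to the transcendence degree of $W(k')[1/p]$ over $\QQ$ being positive, which is immediate: $W(k')[1/p]$ contains $\QQ_p = W(\FF_p)[1/p]$, which is uncountable and hence not algebraic over the countable field $\QQ$. Thus the index set of the transcendence basis in Proposition~\ref{de Rham field} is nonempty and $\Omega^1_{W(k')}$ is a nonzero $W(k')[1/p]$-vector space, which completes the argument. There is no real obstacle here; the only points needing a moment's care are the descent of perfectness to the residue field $k'$ and the (routine) preservation properties of $W(-)$ and of $\Omega^1$ under surjections.
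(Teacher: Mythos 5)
Your proof is correct and follows essentially the same route as the paper: pass to the perfect residue field $k' = k/\mathfrak{m}$, use functoriality of $W(-)$ and $\Omega^1$ under surjections, and conclude from Proposition~\ref{de Rham field} via a countability argument; the extra detail you give about $k'$ being perfect and $W(k')[1/p]$ being transcendental over $\QQ$ is a fleshed-out version of what the paper leaves implicit.
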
 

\begin{proof}
Let $\mathfrak{m} \subseteq k$ denote a maximal ideal.  Then $k \rightarrow k/\mathfrak{m}$ is a surjection from $k$ onto a perfect field of characteristic~$p$; write $k' = k/\mathfrak{m}$.  The induced map $W(k) \rightarrow W(k')$ is a surjective ring homomorphism, so $\Omega^1_{W(k)} \rightarrow \Omega^1_{W(k')}$ is a surjective $W(k)$-module homomorphism.  Because $W(k')$ is uncountable, the field $W(k')[1/p]$ is transcendental over $\QQ$, so our result follows from Proposition~\ref{de Rham field}.
\end{proof}

\begin{corollary}
For every integer $n \geq 1$, the $W_n(W(k))$-module $W_n\Omega^1_{W(k)}$ is non-zero.
\end{corollary}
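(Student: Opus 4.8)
The plan is to reduce to the level-one statement, which is precisely Corollary~\ref{de Rham non-zero}. For $n = 1$ there is nothing to do: the canonical map $\Omega^1_{W(k)} \to W_1\Omega^1_{W(k)}$ is an isomorphism by the theorem recalled above, and $\Omega^1_{W(k)} \neq 0$ by Corollary~\ref{de Rham non-zero}. For general $n$ it suffices to observe that the iterated restriction map $W_n\Omega^1_{W(k)} \to W_1\Omega^1_{W(k)}$ is surjective, for then $W_n\Omega^1_{W(k)}$ admits a nonzero quotient and hence is itself nonzero.

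So the only point to spell out is that restriction $W_m\Omega^d_R \to W_{m-1}\Omega^d_R$ is surjective for every $\ZZ_{(p)}$-algebra $R$, every $d \geq 0$, and every $m \geq 2$. To see this I would use that the canonical surjections $\Omega^d_{W_m(R)} \to W_m\Omega^d_R$ from Theorem~A assemble into a map of pro-objects; that is, the square whose horizontal arrows are these canonical maps and whose vertical arrows are the restriction maps $\Omega^d_{W_m(R)} \to \Omega^d_{W_{m-1}(R)}$ (induced by $W_m(R) \to W_{m-1}(R)$) and $W_m\Omega^d_R \to W_{m-1}\Omega^d_R$ commutes. This commutativity is immediate from the universal property defining the map out of $\Omega^*_{W_m(R)}$ together with the fact that $\lambda$ is a strict map of pro-rings. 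In that square both horizontal arrows are surjective by Theorem~A, and the left-hand vertical arrow is surjective because $W_m(R) \to W_{m-1}(R)$ is; chasing the square shows the right-hand vertical arrow, which is restriction, is surjective as well.

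Combining these observations, for any $n \geq 1$ we obtain a surjection $W_n\Omega^1_{W(k)} \twoheadrightarrow W_1\Omega^1_{W(k)} \cong \Omega^1_{W(k)}$, and the right-hand side is nonzero by Corollary~\ref{de Rham non-zero}, which completes the proof. There is no real obstacle here; the only ingredient requiring comment is the surjectivity of restriction, which is in any case a standard property of the de\thinspace Rham-Witt complex and could alternatively simply be quoted.
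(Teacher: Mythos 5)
Your proof is correct, but it takes a genuinely different route from the paper's. The paper argues by constructing an explicit nonzero element: pick a nonzero $\alpha \in \Omega^1_{W(k)}$, form $V^{n-1}(\alpha) \in W_n\Omega^1_{W(k)}$, and detect its nonvanishing by applying $F^{n-1}$, since $F^{n-1}V^{n-1}(\alpha) = p^{n-1}\alpha$ is nonzero by Proposition~\ref{Omega bijection}. You instead go the other direction, surjecting $W_n\Omega^1_{W(k)}$ onto $W_1\Omega^1_{W(k)} \cong \Omega^1_{W(k)} \neq 0$ via iterated restriction, whose surjectivity you correctly extract from Theorem~A and the fact that $\lambda$ commutes with restriction. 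Both arguments lean on Corollary~\ref{de Rham non-zero}, but the paper's additionally uses the \emph{injectivity} half of Proposition~\ref{Omega bijection} (multiplication by $p$ injective on $\Omega^1_{W(k)}$), whereas yours needs only that $\Omega^1_{W(k)} \neq 0$ together with the general structural fact that restriction is surjective in the de\thinspace Rham-Witt complex. The paper's version has the small advantage of exhibiting a concrete nonzero element $V^{n-1}(\alpha)$; yours is marginally more economical in what it demands of $\Omega^1_{W(k)}$. Either is fine.
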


\begin{proof}
Begin with any non-zero element $\alpha \in \Omega^1_{W(k)}$.  We then have $p^{n-1}\alpha \neq 0$ by Proposition~\ref{Omega bijection}, but on the other hand, $p^{n-1}\alpha = F^{n-1} V^{n-1}(\alpha)$, and so $V^{n-1}(\alpha) \in W_n\Omega^1_{W(k)}$ is non-zero.
\end{proof}

\section{A $p$-adically separated Witt complex over $W(k)$} \label{E section}

Let $k$ denote a perfect ring of odd characteristic~$p$ and let $A = W(k)$.  
We are going to define a Witt complex over $A$.  Our definition is modeled after \cite[Example~1.2.4]{HM04}, which gives a completely analogous description of the de\thinspace Rham-Witt complex over $\ZZ_{(p)}$.  

As an abelian group, we define 
\begin{align*}
E_n^0 &:= W_n(A) \text{ for all } n \geq 1\\
E_n^1 &:= \prod_{i = 0}^{n-1} A/p^iA \cdot dV^i(1) \text{ for all } n \geq 1\\
E_n^d &:= 0 \text{ for all } n \geq 1,\, d \geq 2;
\end{align*}
here $dV^i(1)$ should be viewed as a formal basis symbol.
The ring structure on $E_n^{\bullet}$ is obvious with the exception of the multiplication $E_n^0 \times E_n^1 \rightarrow E_n^1$, and for this we use the ring homomorphisms from Lemma~\ref{module structure En} to give $A/p^iA$ the structure of a $W_n(A)$-module. (We note again that the module structure does not arise from the restriction map $W_n(A) \rightarrow W_1(A) = A$.)   Define $\lambda: W_{n}(A) \rightarrow E_n^0$ to be the identity map and equip $E_n^0$ with the usual ring structure and with the usual maps $R, F, V$.  

Recalling Lemma~\ref{V-combination lemma}, which guarantees that each element in $W_n(A)$ has a unique expression $\sum_{i = 0}^{n-1} s_{\varphi}(a_i) \cdot V^i(1)$, we define $d: E_n^0 \rightarrow E_n^1$ by the formula 
\begin{align*}
d\left( \sum_{i = 0}^{n-1} s_{\varphi}(a_i) V^i(1) \right) &= \sum_{i = 1}^{n-1} a_i \cdot dV^i(1).  \\
\intertext{Define $R: E_{n+1}^1 \rightarrow E_{n}^1$ by the formula}
R\left( \sum_{i = 0}^{n} a_i \cdot dV^i(1) \right) &=\sum_{i = 0}^{n-1} a_i  \cdot dV^i(1).\\
\intertext{Define $F: E_{n+1}^1 \rightarrow E_n^1$ by the formula}
F\left( \sum_{i = 1}^{n} a_i  \cdot dV^i(1) \right) &= \sum_{i = 0}^{n-1} \varphi(a_{i+1})  \cdot dV^i(1). \\
\intertext{Define $V: E_{n}^1 \rightarrow E_{n+1}^1$ by the formula}
V\left( \sum_{i = 1}^{n-1} a_i \cdot dV^i(1) \right) &= \sum_{i = 1}^{n-1}  p\varphi^{-1} (a_{i})  \cdot dV^{i+1}(1).
\end{align*}
We emphasize that this last definition means in particular that $V\left( dV^i(1) \right) = p \cdot dV^{i+1}(1)$.  

\begin{remark}
We use the dot $\cdot$ in the notation $A/p^iA \cdot dV^i(1)$ to help distinguish between this $A/p^iA$-module structure and the $W_n(A)$-module structure, which we write without the dot.  For example, if we let $\pi_{n,i}: W_n(A) \rightarrow A/p^iA$ denote the ring homomorphism from Lemma~\ref{module structure En}, then we would write $x dV^i(1) = \pi_{n,i}(x) \cdot dV^i(1)$.  This distinction isn't mathematically important, but we find it helps to reinforce whether we are multiplying by elements in $A/p^iA$ or by elements in $W_n(A)$ or $W(A)$.   
\end{remark}

Before proving that $E_{\cdot}^{\bullet}$ is a Witt complex, we make a preliminary calculation that does not involve Witt vectors.
This calculation will be used to verify that 
\begin{equation} \label{Teich notation}
Fd([a]) = [a]^{p-1} d([a]) \in E_n^1  
\end{equation}
holds for all $n \geq 1$, which is the most difficult step in our verification that $E_{\cdot}^{\bullet}$ is a Witt complex.

\begin{remark}
In Equation~(\ref{Teich notation}), we are being less careful with notation than Hesselholt and Madsen are in \cite{HM04}.  In their notation, this equation would be written 
\[
Fd([a]_{n+1}) = ([a]_n)^{p-1} d([a]_n) \in E_n^1,
\]
where the subscripts are indicating $[a]_n \in W_n(A)$ and $[a]_{n+1} \in W_{n+1}(A)$.
\end{remark}

\begin{lemma} \label{congruence F lemma}
Continue to let $A = W(k)$, where $k$ is a perfect ring of odd characteristic~$p$, and let $\varphi: A \rightarrow A$ denote the Witt vector Frobenius.  Fix $a \in A$.  Then for every $i \geq 1$, we have 
\begin{equation} \label{congruence F equation}
\frac{1}{p^{i+1}} \left( a^{p^{i+1}} - \varphi(a)^{p^i} \right) \equiv \frac{1}{p^{i}} \left( a^{p^{i}} - \varphi(a)^{p^{i-1}} \right) a^{p^i (p-1)} \bmod p^i A.
\end{equation}
\end{lemma}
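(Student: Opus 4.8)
The plan is to work modulo $p^i A$ throughout and reduce the claimed congruence to a statement purely about the arithmetic of the Frobenius lift $\varphi$ on $A = W(k)$, exploiting the single key input $\varphi(a) \equiv a^p \bmod pA$ (proved inside Proposition~\ref{sF map}). Write $\varphi(a) = a^p + p b$ for some $b \in A$; everything will be expressed in terms of $a$ and $b$. First I would expand the left-hand side: $a^{p^{i+1}} - \varphi(a)^{p^i} = a^{p^{i+1}} - (a^p + pb)^{p^i}$. Likewise the factor $\frac{1}{p^i}(a^{p^i} - \varphi(a)^{p^{i-1}})$ on the right-hand side equals $\frac{1}{p^i}(a^{p^i} - (a^p + pb)^{p^{i-1}})$, and one should observe that $a^{p^i} - (a^p + pb)^{p^{i-1}} = a^{p^i} - (a^{p^i} + p^i b a^{p^{i-1}(p-1)} + \cdots)$, where the omitted terms all carry a factor $p^{i+1}$ once one expands the binomial and counts powers of $p$ (using $\binom{p^{i-1}}{j}$ is divisible by $p^{i-1-v_p(j)}$). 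Hence $\frac{1}{p^i}(a^{p^i} - \varphi(a)^{p^{i-1}}) \equiv -b\, a^{p^{i-1}(p-1)} \bmod pA$, and this congruence, once multiplied by $a^{p^i(p-1)}$, gives that the right-hand side of \eqref{congruence F equation} is congruent mod $p^{i+1}A$ to $-b\, a^{p^{i-1}(p-1)} a^{p^i(p-1)} = -b\, a^{(p-1)(p^{i-1} + p^i)}$.

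Then I would perform the analogous expansion on the left. Writing $(a^p + pb)^{p^i} = \sum_j \binom{p^i}{j} a^{p(p^i - j)} (pb)^j$, the term $j = 0$ is $a^{p^{i+1}}$ and cancels; the term $j = 1$ is $p^i \cdot p\, b\, a^{p(p^i-1)} = p^{i+1} b\, a^{p^{i+1} - p}$; and for $j \geq 2$ the $p$-adic valuation of $\binom{p^i}{j}(pb)^j$ is at least $(i - v_p(j)) + j \geq i + 2$ since $j - v_p(j) \geq 2$ for $j \geq 2$. Therefore $a^{p^{i+1}} - \varphi(a)^{p^i} \equiv -p^{i+1} b\, a^{p^{i+1}-p} \bmod p^{i+2}A$, so $\frac{1}{p^{i+1}}(a^{p^{i+1}} - \varphi(a)^{p^i}) \equiv -b\, a^{p^{i+1} - p} \bmod pA$. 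Finally one checks the exponents match: $p^{i+1} - p = p(p^i - 1) = p(p-1)(p^{i-1} + p^{i-2} + \cdots + 1)$, while the right-hand exponent is $(p-1)(p^{i-1} + p^i)$; these are not literally equal, so one needs the extra observation that $a^{p^{i+1} - p} \equiv a^{(p-1)(p^{i-1}+p^i)} \bmod p^i A$ whenever both sides are multiplied by $b$ — more precisely, that the discrepancy $a^{m} - a^{m'}$ with $m, m' \geq p^{i-1}$ is divisible by a high enough power of $p$, using again $\varphi^j(a) \equiv a^{p^j}$ to see that large powers of $a$ differ from Frobenius-iterates and collapse modulo powers of $p$. (I would actually reorganize to avoid this mismatch: it is cleaner to factor $a^{p^i(p-1)}$ out of the left-hand side first — since $p^{i+1}-p - p^i(p-1) = p^i - p \geq 0$ — and only then compare the residual low-degree pieces.)

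The main obstacle I anticipate is exactly this bookkeeping of binomial coefficients and $p$-adic valuations: the statement is an equality of the leading $p$-adic terms of two quantities each of which is a difference of $p^i$-th-power-order expressions, so naively both sides look like they have valuation far larger than $i$, and the content is that after dividing by $p^{i+1}$ (resp. $p^i$) the leading terms survive and coincide mod $p^i A$. Keeping track of which binomial terms contribute mod $p^i A$ and which are negligible is the delicate part; the lemma $v_p\big(\binom{p^i}{j}\big) = i - v_p(j)$ is the crucial estimate, together with the elementary fact $j - v_p(j) \geq 2$ for $j \geq 2$ (and $= 1$ for $j = 1$). Once those estimates are in hand the computation is mechanical, with no Witt-vector input beyond $\varphi(a) \equiv a^p \bmod pA$ and $\varphi$ being a ring homomorphism. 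I expect no conceptual difficulty, only the need to be careful about exponents and the choice of how to factor common powers of $a$ out before comparing.
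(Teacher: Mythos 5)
Your plan starts along the same lines as the paper's proof (set $\varphi(a)=a^p+pb$, multiply through, expand by the binomial theorem, use $v_p\bigl(\binom{p^i}{j}\bigr)=i-v_p(j)$), but two things go wrong, one minor and one fatal.

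First, a small arithmetic slip: the $j=1$ term of $(a^p+pb)^{p^{i-1}}$ is $p^{i-1}(a^p)^{p^{i-1}-1}(pb)=p^i b\, a^{p(p^{i-1}-1)}=p^i b\, a^{p^i-p}$, with exponent $p^i-p$, not $p^{i-1}(p-1)=p^i-p^{i-1}$. Once corrected, the right-hand leading term becomes $-b\,a^{p^i-p}\cdot a^{p^i(p-1)}=-b\,a^{p^{i+1}-p}$, whose exponent matches the left-hand one exactly. So the ``exponents do not literally match'' obstacle you devote a paragraph to is illusory, and the proposed patch (that $a^m-a^{m'}$ is divisible by high powers of $p$) is not true in general and would be needed nowhere.

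Second, and this is the real gap: you only establish the congruence modulo $pA$, while the lemma asserts it modulo $p^iA$, which is strictly stronger for $i\ge 2$. After clearing denominators, the congruence one must prove holds modulo $p^{2i+1}A$. Your estimate for the $j\ge 2$ binomial terms of $(a^p+pb)^{p^i}$ gives $p$-adic valuation at least $(i-v_p(j))+j\ge i+2$, but $i+2<2i+1$ as soon as $i\ge 2$, so those terms cannot simply be discarded. The point, which the paper's proof makes explicit, is that the $j\ge 2$ contributions from the two sides must be shown to cancel each other modulo $p^{2i+1}$; this requires comparing the coefficient of each monomial $a^{p^{i+1}-pj}x^j$ on both sides, reducing to $p^j\binom{p^i}{j}\equiv p^{j+1}\binom{p^{i-1}}{j}\bmod p^{2i+1}$ for $1\le j\le p^{i-1}$ and $p^j\binom{p^i}{j}\equiv 0\bmod p^{2i+1}$ for $j>p^{i-1}$. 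Extracting only the $j=1$ leading term and bounding the rest by valuation $\ge i+2$ loses exactly the cancellation that carries the statement from ``mod $p$'' to ``mod $p^i$''.
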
  

\begin{proof}
The only fact we will use about $\varphi: A \rightarrow A$ is that for every $a \in A$, there exists $x \in A$ such that $\varphi(a) = a^p + px$.  Multiplying both sides of (\ref{congruence F equation}) by $p^{i+1}$ and applying the binomial theorem to the powers of $\varphi(a) = a^p + px$, we reduce immediately to proving that 
\begin{align*}
\sum_{j = 1}^{p^{i}} \binom{p^i}{j} (a^p)^{p^{i}-j} (px)^j &\equiv pa^{p^i(p-1)} \sum_{j = 1}^{p^{i-1}} \binom{p^{i-1}}{j} (a^p)^{p^{i-1}-j} (px)^j \bmod p^{2i+1} A. \\
\intertext{By distributing the $a^{p^i(p-1)}$ term on the right side, this simplifies to proving that }
\sum_{j = 1}^{p^{i}} \binom{p^i}{j} a^{p^{i+1}-pj} (px)^j &\equiv p \sum_{j = 1}^{p^{i-1}} \binom{p^{i-1}}{j} a^{p^{i+1}-pj} (px)^j \bmod p^{2i+1} A. 
\end{align*}
By comparing the coefficients of the $a^m x^n$ monomials, it suffices then to prove the following two claims:
\begin{itemize}
\item For every $j$ in the range $1 \leq j \leq p^{i-1}$, we have 
\[
p^j \binom{p^i}{j} \equiv p^{j+1} \binom{p^{i-1}}{j} \bmod p^{2i + 1}.
\]
\item For every $j$ in the range $p^{i-1} + 1 \leq j \leq p^i$, we have 
\[
p^j\binom{p^i}{j} \equiv 0 \bmod p^{2i + 1}.
\]
\end{itemize}

To prove the first claim, we rewrite it as
\begin{align*}
p^j \left( \binom{p^i}{j} -  p\binom{p^{i-1}}{j}\right) &\equiv 0 \bmod p^{2i + 1}. \\
\intertext{The left side equals 0 if $j = 1$, so we may assume $j \geq 2$ and simplify the expression as }
p^j \frac{p^i}{j!} \bigg( (p^i - 1) \cdots (p^i - j + 1) - (p^{i-1}-1)\cdots (p^{i-1} -j+1)\bigg) &\equiv 0 \bmod p^{2i+1}.
\end{align*}
The term inside the parentheses is the difference of two terms which are congruent modulo~$p^{i-1}$, hence the term inside the parentheses is divisible by $p^{i-1}$.  Thus it suffices to show that for every $j \geq 2$ we have 
\[
p^j \frac{p^{2i - 1}}{j!} \equiv 0 \bmod p^{2i + 1}.
\] 
Thus it suffices to show that for every $j \geq 2$, we have $j - v_p(j!) \geq 2$, where $v_p$ denotes the $p$-adic valuation.  Because $p \geq 3$, the inequality is true if $j = 2$.  For the case $j \geq 3$, again using $p \geq 3$, we compute
\[
j - v_p(j!) \geq j - \left( \frac{j}{p} + \frac{j}{p^2} + \cdots \right) = j - j \frac{1}{p(p-1)} \geq j - \frac{j}{6} = \frac{5j}{6} \geq \frac{15}{6} \geq 2,
\]
which completes the proof of the first claim. 

To prove the second claim, we first treat the case $j = p^i$.  Then we need to show that $p^i \geq 2i + 1$, which is true because $p \geq 3$ and $i \geq 1$.  For the case $p^{i-1} + 1 \leq j < p^i$, we know the binomial coefficient in the expression has $p$-adic valuation at least one, so it suffices to prove that $j+1 \geq 2i + 1$.  Thus it suffices to prove that $p^{i-1} + 2 \geq 2i + 1$.  Again this holds because $p \geq 3$ and $i \geq 1$.  
\end{proof}

\begin{remark}
Lemma~\ref{congruence F lemma} is false in general if $p = 2$.  For example, it is already false in the case $A = \ZZ_2$, $\varphi = \text{id}$, $a = 2$, and $i = 1$.
\end{remark}

We can now state our main theorem of this section; all the main results of this paper are dependent on the following result.  

\begin{theorem}
Let $k$ be a perfect ring of characteristic $p > 2$, and let $A = W(k)$.  The complex $E_{\cdot}^{\bullet}$ defined above is a Witt complex over $A$.  
\end{theorem}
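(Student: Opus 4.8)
The plan is to go through the three clauses of the definition of a Witt complex and verify each condition for the explicit complex $E_\cdot^\bullet$. Because $E_n^d = 0$ for $d \geq 2$, almost every condition reduces, via the unique expansion $x = \sum_i s_\varphi(a_i)V^i(1)$ of Lemma~\ref{V-combination lemma}, to a short bookkeeping computation with the coefficients $a_i \in A$. The one condition that requires real work is the Teichm\"uller relation $Fd\lambda([a]) = \lambda([a]^{p-1})\,d\lambda([a])$, for which Lemma~\ref{congruence F lemma} was prepared; I would handle that last.

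First I would check that each $E_n^\bullet$ is a differential graded ring and that $R$ and $\lambda$ give it a strict pro-structure. The ring axioms reduce to the $W_n(A)$-module structure on $E_n^1$, whose associativity is precisely the statement (Lemma~\ref{module structure En}) that each structure map $W_n(A) \to A/p^iA$ is a ring homomorphism; graded-commutativity and $E_n^1\cdot E_n^1 = 0$ are automatic, and $d\circ d = 0$ holds trivially since $E_n^2 = 0$. For the Leibniz rule, note that $s_\varphi$ is additive and $s_\varphi(a_i)V^i(1)\cdot s_\varphi(b_j)V^j(1) = s_\varphi(p^{\min(i,j)}a_ib_j)V^{\max(i,j)}(1)$, so the $s_\varphi$-coordinates of a product $xy$ are explicit, and $d(xy) = x\,dy + y\,dx$ follows by comparing coefficients of $dV^m(1)$ modulo $p^m$. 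Well-definedness of $d$, $R$, $F$, $V$ and their additivity are immediate from the coordinate formulas once one observes that $\varphi$ and $\varphi^{-1}$ are ring automorphisms fixing $p$ (hence preserve each $p^iA$) and that $\varphi(p^{i+1}A)\subseteq p^iA$ and $p\,\varphi^{-1}(p^iA)\subseteq p^{i+1}A$, which is exactly what lets the formulas for $F$ and $V$ descend to the quotients. Compatibility of $R$ with $d$, $F$, $V$ and the ring structure, and the identities $F\lambda = \lambda F$, $V\lambda = \lambda V$ (trivial in degree $0$, where $\lambda$ is the identity), are then read off.

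Next I would treat the remaining conditions on $F$ and $V$. Using $F(s_\varphi(a)) = s_\varphi(\varphi(a))$ and $F(V^i(1)) = pV^{i-1}(1)$, a short telescoping shows that the image of $F(x)$ under $W_n(A)\to A/p^iA$ is $\varphi$ applied to the image of $x$; combined with the definition of $F$ on $E^1$ this yields $F(x\omega) = F(x)F(\omega)$, so $F$ is a strict map of pro-graded rings. For $V$ as a map of graded $E_\cdot^*$-modules one checks $V(F(\omega)\eta) = \omega V(\eta)$ (and its right-handed version) in the two mixed bidegrees, $\omega\in E_{n+1}^1,\ \eta\in E_n^0$ and $\omega\in E_{n+1}^0,\ \eta\in E_n^1$, again by comparing $dV^i(1)$-coefficients, the degree-zero case being the usual Frobenius–Verschiebung projection formula in $W(A)$. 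Finally, $FV = p$ and $FdV = d$ are quick: applying $V$ then $F$ to $(c+p^iA)\,dV^i(1)$ returns $(pc+p^iA)\,dV^i(1)$, which is $p$ times the original in the module structure; and $FdV(\sum_i s_\varphi(a_i)V^i(1)) = \sum_{i\ge 0}(a_i+p^iA)\,dV^i(1)$, which equals $d(\sum_i s_\varphi(a_i)V^i(1))$ because the $i=0$ term lies in $A/p^0A = 0$.

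The main obstacle, as the introduction warns, is the relation $Fd[a] = [a]^{p-1}\,d[a]$ in $E_n^1$. By Lemma~\ref{Teich lemma} the coordinates of $[a]$ are $a_i = \varphi^{-i}((a^{p^i}-\varphi(a)^{p^{i-1}})/p^i)$, so $Fd[a] = \sum_{i\ge 1}(\varphi(a_{i+1})+p^iA)\,dV^i(1)$ with $\varphi(a_{i+1}) = \varphi^{-i}((a^{p^{i+1}}-\varphi(a)^{p^i})/p^{i+1})$. Applying $\varphi^{-i}$ to the congruence of Lemma~\ref{congruence F lemma} turns it into $\varphi(a_{i+1}) \equiv a_i\,\varphi^{-i}(a)^{p^i(p-1)} \bmod p^iA$. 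On the other side, the coordinates telescope: $a_jp^j = \varphi^{-j}(a)^{p^j} - \varphi^{-(j-1)}(a)^{p^{j-1}}$, so the image of $[a]$ in $A/p^iA$ is $\varphi^{-(i-1)}(a)^{p^{i-1}} + p^iA$, whence the coefficient of $dV^i(1)$ in $[a]^{p-1}\,d[a]$ is $a_i\,\varphi^{-(i-1)}(a)^{p^{i-1}(p-1)} + p^iA$. It remains to note $\varphi^{-i}(a)^{p^i}\equiv\varphi^{-(i-1)}(a)^{p^{i-1}} \bmod p^i$, which follows from $\varphi^{-1}(b)^p\equiv b \bmod p$ together with the fact that $x\equiv y\bmod p$ implies $x^{p^{i-1}}\equiv y^{p^{i-1}}\bmod p^i$; raising to the $(p-1)$-st power and comparing gives the two coefficients agree modulo $p^iA$, as required (the $i=0$ terms on both sides vanish since $A/p^0A = 0$). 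This is the step where the hypothesis $p>2$ is used, through Lemma~\ref{congruence F lemma}.
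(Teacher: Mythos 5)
Your proof is correct and takes essentially the same approach as the paper: verify each Witt complex axiom by coordinate computations using Lemma~\ref{V-combination lemma} and Lemma~\ref{module structure En}, with the hard Teichm\"uller relation $Fd[a]=[a]^{p-1}d[a]$ reduced to Lemma~\ref{congruence F lemma} via the explicit formulas of Lemma~\ref{Teich lemma}. The one cosmetic difference is at the very end of the Teichm\"uller verification, where you telescope the coordinate sum only up to index $i-1$ (obtaining $\varphi^{-(i-1)}(a)^{p^{i-1}}$) and then supply the additional congruence $\varphi^{-i}(a)^{p^i}\equiv\varphi^{-(i-1)}(a)^{p^{i-1}}\bmod p^iA$, whereas the paper sums through index $i$ (the extra term $a_i p^i$ being harmless), applies $\varphi^i$, and reads off $a^{p^i}$ directly from the ghost-component identity---a bookkeeping variation with the same content.
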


\begin{proof}
Many of the required properties are obvious; the main difficulty is proving that for all $a \in A$ and all $n \geq 2$, we have 
\begin{equation} \label{Teich relation}
Fd([a]) = [a]^{p-1} d([a]) \in E_{n-1}^1.  
\end{equation}
We postpone this verification to the end of the proof.  

The following properties are clear: 
\begin{itemize}
\item For each $n$, $E_n^{\bullet}$ is a ring.
\item The maps $R$ are ring homomorphisms.
\item The map $\lambda$ is a ring homomorphism that commutes with $R$.
\item The maps $F, V$ commute with $\lambda$.  
\item The maps $d, F, V$ are additive.
\item The maps $d, F, V$ commute with $R$.
\item The composition $FV$ is equal to multiplication by $p$.
\end{itemize}

Next we check that $d$ verifies the Leibniz rule.  Because $d$ is additive and because $ds_{\varphi}(a) = 0$ for all $a$, it suffices to prove that for all $1 \leq j \leq i$, we have
\[
d\left( V^i(1) V^j(1)\right) = V^i(1)  dV^j(1) + V^j(1) dV^i(1).
\]
Using the definition of our multiplication $E_n^0 \times E_n^1 \rightarrow E_n^1$ and using $V(x)V(y) = pV(xy)$, we see that both sides are equal to $(p^j + p^iA) \cdot dV^i(1)$.  

Next we check that $F$ is multiplicative.  The only part which isn't obvious is to show that if $x \in E_n^0$ and $y \in E_n^1$, then we have 
\[
F(xy) = F(x) F(y).
\]
Because we already know $F$ is additive, it suffices to check this in the special cases $x = x_1 := s_{\varphi}(a)$, $x = x_2 := V^i(1)$ with $i \geq 1$, and $y = (b + p^j A) \cdot dV^j(1),$ where $j \geq 1$.  We have $F(x_1) = s_{\varphi}(\varphi(a))$, $F(x_2) = pV^{i-1}(1)$, and $F(y) = (\varphi(b) + p^{j-1}A) \cdot dV^{j-1}(1)$.  On the other hand, $x_1 y = ( ab + p^jA) \cdot dV^j(1)$ and $F(x_1y) = (\varphi(ab) + p^{j-1}A) \cdot dV^{j-1}(1) = F(x_1) F(y)$.  We also have $x_2 y = (p^i b+ p^jA) \cdot dV^j(1)$ and $F(x_2 y) = (p^i \varphi(b) + p^{j-1}A) \cdot dV^{j-1}(1) = F(x_2) F(y)$. 

We next check that for all $x \in E_{n+1}^{\bullet}$ and $y \in E_{n}^{\bullet}$, we have
\begin{equation} \label{V module}
V(F(x) y) = x V(y).
\end{equation}
This is obvious if $x,y$ are both in degree~zero or both in degree~one, thus we only need to consider the case that one of them is degree~zero and the other is degree~one.  It suffices to consider the case that the degree~one term has the form $dV^j(1)$ and the degree zero term has the form $s_{\varphi}(a) V^i(1)$.  If $x = dV(1)$ and $y = s_{\varphi}(a) V^i(1)$, then both sides of Equation~(\ref{V module}) are zero.  If $x = dV^j(1)$ with $j \geq 2$ and  $y = s_{\varphi}(a) V^i(1)$, we compute 
\begin{align*}
V\bigg(F(x)y\bigg) &= V\bigg( p^i a  \cdot dV^{j-1}(1) \bigg) \\
&= p^{i+1} \varphi^{-1}(a)   \cdot dV^{j}(1) \\
&= \bigg( s_{\varphi}\left(\varphi^{-1}(a)\right) V^{i+1}(1)\bigg)  dV^j(1) \\
&= xV(y).  
\end{align*}
If $x = s_{\varphi}(a)$ and $y = dV^j(1)$, then we compute 
\[
V(F(x)y) = V\bigg(\varphi(a) \cdot dV^j(1) \bigg) = ps_{\varphi}(a) dV^{j+1}(1) = x V(y).
\]
If $x = s_{\varphi}(a) V^i(1)$ with $i \geq 1$ and $y = dV^j(1)$, then we compute
\begin{align*}
V(F(x)y) &= V\bigg(s_{\varphi}(\varphi(a)) pV^{i-1}(1) dV^j(1)\bigg) \\
&= V\bigg(\varphi(a)p^{i}  \cdot dV^j(1) \bigg) \\
&= a p^{i+1} \cdot dV^{j+1}(1) \\
&= px  dV^{j+1}(1) \\
&= x V(y).
\end{align*}

To prove $FdV = d$, we begin with a term $x = s_{\varphi}(a) V^i(1) \in E_n^0$ and compute 
\begin{align*}
FdV(x) &= Fd\bigg(s_{\varphi}\left(\varphi^{-1}(a)\right) V^{i+1}(1)\bigg) \\
&= F \bigg( \varphi^{-1}(a)  \cdot dV^{i+1}(1) \bigg) \\
&= a \cdot dV^i(1) \\
&= dx,
\end{align*}
as required.  

To complete the proof, it remains to prove Equation~(\ref{Teich relation}).  For fixed $n \geq 2$, we compute
\begin{align*}
Fd[a] &= Fd \left( \sum_{i = 0}^{n-1} s_{\varphi}(a_i) V^i(1) \right), \\
\intertext{where the $a_i$ are given by the formulas in Lemma~\ref{Teich lemma}.  We then compute further }  
&= \sum_{i = 1}^{n-1} F \bigg( a_i  \cdot dV^i(1) \bigg) \\
&= \sum_{i = 2}^{n-1}  \varphi(a_i)  \cdot dV^{i-1}(1) \\
&= \sum_{i = 1}^{n-2}  \varphi(a_{i+1}) \cdot dV^{i}(1) .
\intertext{For the other side of Equation~(\ref{Teich relation}), we have }
[a]^{p-1} d[a] &= \left( \sum_{j = 0}^{n-2} s_{\varphi}(a_j) V^j(1) \right)^{p-1} d\left(\sum_{i = 0}^{n-2} s_{\varphi}(a_i) V^i(1) \right) \\
&= \sum_{i = 1}^{n-2} \left( a_i \left( \sum_{j = 0}^{n-2} a_j p^j \right)^{p-1}  \right) \cdot dV^i(1).
\end{align*}  
We are finished if we can prove that, for every $i$ in the range $1 \leq i \leq n-2$, we have 
\[
\varphi(a_{i+1}) \equiv a_i \left( \sum_{j = 0}^{n-2} a_j p^j \right)^{p-1} \bmod p^iA,  
\]
which is clearly equivalent to proving 
\[
\varphi(a_{i+1}) \equiv a_i \left( \sum_{j = 0}^{i} a_j p^j \right)^{p-1} \bmod p^iA.  
\]
Because $\varphi$ is an isomorphism, it suffices to prove 
\[
\varphi^{i+1} (a_{i+1}) \equiv \varphi^{i}(a_i) \left( \sum_{j = 0}^{i} \varphi^i(a_j) p^j \right)^{p-1} \bmod p^iA.
\]
Recall our definition of the $a_j$ terms:
\[
[a] = \sum_{j = 0}^{\infty} s_{\varphi}(a_j) V^j(1) \in W(A).  
\]
Comparing the ghost components of the two sides, we have $a^{p^i} = \sum_{j = 0}^{i} \varphi^i(a_j) p^j$ for every $i \geq 0$.  Thus we are finished if we can prove 
\[
\varphi^{i+1} (a_{i+1}) \equiv \varphi^{i}(a_i) a^{p^i(p-1)} \bmod p^iA.
\]
By Lemma~\ref{Teich lemma}, we have reduced to showing 
\[
\frac{a^{p^{i+1}} - \left(\varphi(a)\right)^{p^{i}}}{p^{i+1}} \equiv \frac{a^{p^i} - \left(\varphi(a)\right)^{p^{i-1}}}{p^i} a^{p^i(p-1)} \bmod p^iA,
\]
which was proved in Lemma~\ref{congruence F lemma}.  This completes the proof of Equation~(\ref{Teich relation}), and this also completes the proof that $E_{\cdot}^{\bullet}$ is a Witt complex over $A$.  
\end{proof} 

\begin{corollary} \label{separated corollary}
For every integer $n$, the ring $E_{n}^{\bullet}$ is $p$-adically separated.  
\end{corollary}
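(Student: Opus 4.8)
The plan is to reduce the claim to a separate, easy check on each of the two nonzero graded pieces. Since $E_n^d = 0$ for $d \geq 2$ and the integer $p$ sits in degree $0$, multiplication by $p$ on $E_n^{\bullet}$ respects the decomposition $E_n^{\bullet} = E_n^0 \oplus E_n^1$, so $p^m E_n^{\bullet} = p^m E_n^0 \oplus p^m E_n^1$ for every $m$, and hence $\bigcap_{m \geq 1} p^m E_n^{\bullet} = \bigl(\bigcap_{m} p^m E_n^0\bigr) \oplus \bigl(\bigcap_{m} p^m E_n^1\bigr)$. Thus it suffices to prove that $E_n^0$ and $E_n^1$ are each $p$-adically separated.

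For $E_n^1 = \prod_{i=0}^{n-1} A/p^iA \cdot dV^i(1)$ I would simply observe that it is annihilated by $p^{n-1}$, which immediately forces $\bigcap_m p^m E_n^1 = 0$. The only point that needs checking is that the integer $p$ acts on each factor $A/p^iA$ as ordinary multiplication by $p$; this holds because, by Lemma~\ref{module structure En}, the $W_n(A)$-module structure on $A/p^iA$ is induced by a \emph{ring} homomorphism $W_n(A) \twoheadrightarrow A/p^iA$, which necessarily carries $p$ (that is, $p\cdot 1$) to $p\cdot 1 = p + p^iA$. Hence $p^i$ kills $A/p^iA$, and since every factor has $i \leq n-1$ we conclude $p^{n-1}E_n^1 = 0$.

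For $E_n^0 = W_n(A)$ I would invoke Lemma~\ref{V-combination lemma}: viewing $W_n(A)$ as an $A$-module via $s_{\varphi}$, the elements $1, V(1), \dots, V^{n-1}(1)$ form a basis, so $W_n(A) \cong A^{\oplus n}$ as $A$-modules. Because $s_{\varphi}$ is a ring homomorphism we have $p \cdot s_{\varphi}(a)V^i(1) = s_{\varphi}(pa)V^i(1)$, so this identification intertwines multiplication by the integer $p$ on $W_n(A)$ with multiplication by $p$ on $A^{\oplus n}$. Consequently $\bigcap_m p^m W_n(A)$ corresponds to $\bigl(\bigcap_m p^m A\bigr)^{\oplus n}$, which vanishes since $A = W(k)$ is $p$-torsion free and $p$-adically separated — indeed $W(k)$ is even $p$-adically complete, a standard property of the Witt vectors of a perfect ring.

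I do not expect a genuine obstacle here; the corollary is essentially formal once Lemmas~\ref{module structure En} and \ref{V-combination lemma} are in hand. The only subtlety worth flagging is that the $p$-adic topology on $E_n^1$ is governed by the ring-homomorphism module structure of Lemma~\ref{module structure En} rather than by the projection $W_n(A)\to A$; but since both maps send the integer $p$ to $p$, the action of $p$ is unambiguous and this distinction does not affect the argument.
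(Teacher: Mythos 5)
Your proof is correct and follows the same basic strategy as the paper's: split $E_n^{\bullet}$ into its degree~$0$ and degree~$1$ pieces, observe that $E_n^1$ is killed by $p^{n-1}$, and reduce the degree~$0$ case to $p$-adic separatedness of $A = W(k)$. The paper dispatches the degree~$0$ case with a one-line assertion, while you justify it carefully using the free $A$-module structure on $W_n(A)$ coming from Lemma~\ref{V-combination lemma}; this is a fine way to fill that gap (one could alternatively use injectivity of the ghost map $W_n(A)\hookrightarrow A^n$, which works for any $p$-torsion-free $p$-adically separated $A$, not just $A = W(k)$).
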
 

\begin{proof}
This follows immediately from our definition of $E_{n}^{\bullet}$:  in degree zero, $E_n^0 = W_n(A)$, which is $p$-adically separated because $A$ is $p$-adically separated.  In degree one, we have $p^{n-1} E_n^1 = 0$, and hence $E_n^1$ is also $p$-adically separated.  
\end{proof}

\begin{remark} \label{not iso} 
Our Witt complex $E_{\cdot}^{\bullet}$ is not isomorphic to the de\thinspace Rham-Witt complex $W_{\cdot} \Omega^{\bullet}_A$.  For example, $E_1^1 = 0$, while on the other hand it was shown in Corollary~\ref{de Rham non-zero} that $W_1\Omega^1_A = \Omega^1_A \neq 0$.  Nor is our Witt complex isomorphic to the relative de\thinspace Rham-Witt complex of Langer and Zink \cite{LZ04}: in their Witt complex, one always has $dV(1) = 0$.  Following the language of \cite[Remark~4.8]{Hes15}, our Witt complex $E_{\cdot}^{\bullet}$ is the $p$-typical de\thinspace Rham-Witt complex over $A$ relative to the $p$-typical $\lambda$-ring $(A, s_{\varphi})$: this follows from the fact that the elements $s_{\varphi}(\alpha)$ for $\alpha \in \Omega^1_A$ are all zero in $E_n^d$, and that the differential map $E_{\cdot}^{\bullet} \rightarrow E_{\cdot}^{\bullet+1}$ is $A$-linear.  
\end{remark}

\section{Applications to the de\thinspace Rham-Witt complex over $A = W(k)$} \label{app to W(k)}

Continue to assume $A = W(k)$ where $k$ is a perfect ring of odd characteristic~$p$.  In this section, we use our $p$-adically separated Witt complex $E_{\cdot}^{\bullet}$ from Section~\ref{E section} to give an explicit description (as an $A$-module) of the de\thinspace Rham-Witt complex over $A$.  

\begin{remark}
In this section we describe the de\thinspace Rham-Witt complex over $A = W(k)$ as an $A$-module.  The level~$n$ piece of the de\thinspace Rham-Witt complex over $A$ is always a $W_n(A)$-module.  We warn that the $W_n(A)$-module structure does not factor through restriction $W_n(A) \rightarrow W_1(A) \cong A$.  For example, multiplication by $V(1)$ is non-zero.
\end{remark}

As $W_{\cdot}\Omega^{\bullet}_A$ is by definition the initial object in the category of Witt complexes over $A$, we get a natural map  $W_{\cdot}\Omega^{\bullet}_A \rightarrow E_{\cdot}^{\bullet}$.  The following key result identifies the kernel of this map in degree~one.

\begin{proposition} \label{natural map}
Fix any integer $n \geq 1$, and let $S_n \subseteq W_n \Omega^1_A$ be the $W_n(A)$-submodule $\cap_{j = 1}^{\infty} p^j W_n \Omega^1_A$.  The natural map $\eta: W_n\Omega^1_A \rightarrow E_n^1$ induces an isomorphism $W_n\Omega^1_A/S_n \cong E_n^1$.  
\end{proposition}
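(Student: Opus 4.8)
The plan is to construct the natural map $\eta$ using the universal property, show it is surjective, identify its kernel as exactly $S_n$, and thereby conclude.

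First, since $E_{\cdot}^{\bullet}$ is a Witt complex over $A$ (by the main theorem just proved), the universal property of the de\thinspace Rham-Witt complex produces a unique morphism of Witt complexes $W_{\cdot}\Omega^{\bullet}_A \to E_{\cdot}^{\bullet}$; in level $n$ and degree $1$ this gives the natural map $\eta: W_n\Omega^1_A \to E_n^1$. For surjectivity, I would check that $E_n^1$ is generated as a differential graded ring (equivalently, as a $W_n(A)$-module together with the operators $d,V$) by elements in the image of $\eta$. Concretely, $E_n^1 = \prod_{i=0}^{n-1} A/p^iA \cdot dV^i(1)$, and each generator $dV^i(1)$ is the image under $\eta$ of the corresponding element $dV^i(1) \in W_n\Omega^1_A$ (the map $\eta$ commutes with $d$ and $V$ and sends $1 \in W_n(A) = E_n^0$ to $1$). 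Multiplying by $s_{\varphi}(a)$ for $a \in A$ then scales $dV^i(1)$ to $(a + p^iA)\cdot dV^i(1)$, and these together span $E_n^1$; hence $\eta$ is surjective.

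It remains to show $\ker \eta = S_n$. The inclusion $S_n \subseteq \ker\eta$ is immediate: $E_n^1$ is $p$-adically separated by Corollary~\ref{separated corollary} (indeed $p^{n-1}E_n^1 = 0$), so $\eta$ kills every element divisible by arbitrarily high powers of $p$. For the reverse inclusion $\ker\eta \subseteq S_n$, I expect to argue by induction on $n$. The case $n = 1$ is degenerate: $E_1^1 = 0$, so I must show $\ker(\eta: \Omega^1_A \to 0) = \Omega^1_A$ equals $S_1 = \cap_j p^j\Omega^1_A$, which holds precisely because multiplication by $p$ is \emph{bijective} on $\Omega^1_{W(k)}$ by Proposition~\ref{Omega bijection}, forcing $\Omega^1_A = p^j\Omega^1_A$ for all $j$. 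For the inductive step, suppose $\omega \in W_{n+1}\Omega^1_A$ with $\eta(\omega) = 0$. Its restriction $R\omega \in W_n\Omega^1_A$ lies in $\ker(\eta_n)$ since $\eta$ commutes with $R$, so by induction $R\omega \in S_n = \cap_j p^j W_n\Omega^1_A$; using that multiplication by $p$ is an isomorphism on $\Omega^1_A$ (hence, I will argue, $S_n$ is $p$-divisible and one can lift the $p$-divisibility) together with Proposition~\ref{Fil} to control the kernel of restriction $W_{n+1}\Omega^1_A \to W_n\Omega^1_A$ as $V^n(\Omega^1_A) + dV^n(A)$, I would reduce to analyzing $\omega$ modulo the kernel of restriction and then show $\omega$ is itself divisible by arbitrarily high powers of $p$. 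The key mechanism is that $p$-divisibility of $\Omega^1_A$ propagates: given $\omega$ with $R\omega$ arbitrarily $p$-divisible, one writes $\omega = V^n(\alpha) + dV^n(\beta)$ modulo higher levels via Proposition~\ref{Fil}, and the relations $p\,dV = Vd$, $pV\alpha = VF V\alpha = \dots$, together with bijectivity of $p$ on $\Omega^1_A$, let one absorb powers of $p$.

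The main obstacle I anticipate is precisely this last point: showing $\ker\eta \subseteq S_n$, i.e. that the only way an element of $W_n\Omega^1_A$ can die in $E_n^1$ is by being infinitely $p$-divisible. This requires combining the explicit generators-and-relations description of $W_n\Omega^1_A$ coming from the surjection $\Omega^1_{W_n(A)} \twoheadrightarrow W_n\Omega^1_A$, the identification of the kernel of restriction from Proposition~\ref{Fil}, and the bijectivity of $p$ on $\Omega^1_{W(k)}$ from Proposition~\ref{Omega bijection} — the interplay of these three, carried through the induction on $n$, is where the real work lies; everything else (existence of $\eta$ via universality, surjectivity, the easy inclusion $S_n \subseteq \ker\eta$) is formal.
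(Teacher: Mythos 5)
Your surjectivity argument and the easy inclusion $S_n \subseteq \ker\eta$ are both correct and essentially identical to what the paper does. The substantive issue is the reverse inclusion $\ker\eta \subseteq S_n$, which you propose to attack by induction on $n$, but the inductive step as written is a sketch that does not close. From $\eta_{n+1}(\omega) = 0$ and (by induction) $R\omega \in S_n$, you still have to extract arbitrarily high $p$-divisibility of $\omega$ itself, and the mechanism is missing: you would need to observe that (a) $\eta$ kills every $V^n(\alpha)$ outright, since $\eta$ commutes with $V^n$ and $E_1^1 = 0$; (b) the top component of $\eta(dV^n(a))$ in $E^1_{n+1}$ is $(\varphi^{-n}(a) + p^nA)\cdot dV^n(1)$, so its vanishing forces $a \in p^nA$; and (c) writing $a = p^n a'$, the identity $p^n\,dV^n(a') = V^n(da')$ together with the $p$-divisibility of $da' \in \Omega^1_A$ yields the remaining divisibility. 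None of these steps appear in your sketch; you have correctly listed the tools (Proposition~\ref{Fil}, Proposition~\ref{Omega bijection}, compatibility of $\eta$ with the operators), but not assembled them into an argument.

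More importantly, the paper's own proof takes a different and shorter route that avoids induction entirely. It considers the composition $\Omega^1_{W_n(A)} \twoheadrightarrow W_n\Omega^1_A \to E_n^1$ and, using Lemma~\ref{V-combination lemma} and Lemma~\ref{module structure En}, writes down explicit $W_n(A)$-module generators for the kernel of this composition: $ds_\varphi(a)$ for $a \in A$, $(V^j(1) - p^j)\,dV^i(1)$, and $p^i\,dV^i(1)$. Since $\Omega^1_{W_n(A)} \to W_n\Omega^1_A$ is surjective, $\ker\eta$ is exactly the image of this kernel, and one checks each generator's image lands in $S_n$: the latter two families are in fact zero in $W_n\Omega^1_A$ by the standard Witt complex relations ($p\,dV = Vd$ and $V^j(1)\,dV^i(1) = p^j\,dV^i(1)$), while $ds_\varphi(a) = s_\varphi(da)$ is infinitely $p$-divisible because $p$ is a bijection on $\Omega^1_A$. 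As $S_n$ is a $W_n(A)$-submodule, this finishes the proof. The advantage of this route is that it localizes the entire difficulty in computing the kernel of $\Omega^1_{W_n(A)} \to E_n^1$, a computation carried out in the explicitly defined object $E_n^1$, and never requires reasoning about nonvanishing inside $W_n\Omega^1_A$ itself --- precisely the thing you correctly identify as hard.
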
 

\begin{proof}
Because $E_n^1$ is $p$-adically separated, we see that $S_n$ is contained in the kernel of the map $W_n\Omega^1_A \rightarrow E_n^1$.  Consider the composition
\[
\Omega^1_{W_n(A)} \rightarrow W_n\Omega^1_A \rightarrow E^1_n.
\]
From our explicit description of $E^1_n$, we see that this composition is surjective.  We will now show that the kernel of this composition is generated as a $W_n(A)$-module by elements of the form
\begin{itemize}
\item $ds_{\varphi}(a)$,
\item $(V^j(1) - p^j) dV^i(1)$, and 
\item $p^i dV^i(1)$.
\end{itemize} 
It is clear that these groups of elements are all in the kernel.  

Consider now an arbitrary element $\omega \in \Omega^1_{W_n(A)}$ which is in the kernel; we must show that $\omega$ can be expressed as a $W_n(A)$-linear combination of the above elements.  Viewing $\Omega^1_{W_n(A)}$ as an $A$-module via $s_{\varphi}$, we have that an arbitrary element in $\Omega^1_{W_n(A)}$ can be expressed as an $A$-linear combination of the elements $V^i(1) ds_{\varphi}(a)$ and $V^j(1)dV^i(1)$ with $0 \leq j \leq i \leq n-1$.  Thus we may write
\[
\omega = \sum_{i = 0}^{n-1} s_{\varphi}(b_i) V^i(1) ds_{\varphi}(a_i) + \sum_{0 \leq j \leq i \leq n-1} s_{\varphi}(a_{j,i}) V^j(1) dV^i(1),
\]
for some elements $b_i, a_i, a_{j,i} \in A$.  Because the above itemized elements are all also in the kernel, we deduce that the element
\[
\omega' :=  \sum_{0 \leq j < i \leq n-1} p^j s_{\varphi}(a_{j,i})  dV^i(1)
\]
must also be in the kernel.  From the explicit description of $E_n^1$, because $\omega'$ is in the kernel of the composition, we have that for each fixed $i$, we have $\sum_j p^j a_{j,i} \in p^i A$.  Thus, for each fixed $i$, we have that $\sum_j p^j s_{\varphi}(a_{j,i})dV^i(1)$ is a $W_n(A)$-multiple of $p^i dV^i(1)$.  This proves that $\omega'$, and hence also $\omega$, is in the $W_n(A)$-submodule generated by the above elements.

We are finished, because $\Omega^1_{W_n(A)} \rightarrow W_n\Omega^1_A$ is surjective, and because the images of the above elements in $W_n \Omega^1_A$ are all in the submodule $S_n$.  In fact, the images of the second and third groups of elements are equal to 0 in $W_n\Omega^1_A$:  this follows from the identities $p^i dV^i = V^i d$ and 
\[
V(1)dV^i(1) = V\bigg(FdV^{i}(1)\bigg) = V\bigg(dV^{i-1}(1)\bigg) = pdV^i(1),
\]
which hold in every Witt complex.
\end{proof}

The following is modeled after \cite[Section~3.2]{HM03}.

\begin{lemma} \label{Mj module structure}
Continue to assume $A = W(k)$ where $k$ is a perfect ring of odd characteristic~$p$. 
For every $j \geq 1$, the map 
\[
h_j: A \rightarrow \Omega^1_A \oplus A, \qquad a \mapsto (-da, p^j a),
\]
is an $A$-module homomorphism, where the left-hand side has its $A$-module structure induced by $\varphi^j$ and where the right-hand side has component-wise addition and $A$-module multiplication defined by 
\[
x \cdot (\alpha, a) = \left(\varphi^j(x) \alpha - \frac{1}{p^j} a d\varphi^j(x) , \varphi^j(x) a\right).  
\]
\end{lemma}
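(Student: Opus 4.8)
The plan is to check two things in turn: first, that the asserted formula
$x\cdot(\alpha,a)=\bigl(\varphi^j(x)\alpha-\frac{1}{p^j}a\,d\varphi^j(x),\ \varphi^j(x)a\bigr)$
really defines an $A$-module structure on $\Omega^1_A\oplus A$, and second, that $h_j$ is compatible with this structure and with the $\varphi^j$-twisted $A$-module structure on the source. The only non-formal ingredient is Proposition~\ref{Omega bijection}: since multiplication by $p$, hence by $p^j$, is a bijection on $\Omega^1_A=\Omega^1_{W(k)}$, the element $\frac{1}{p^j}a\,d\varphi^j(x)\in\Omega^1_A$ is unambiguously defined, so the formula makes sense at all. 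Everything else will follow from the Leibniz rule for $d$ together with the fact that $\varphi^j\colon A\to A$ is a ring homomorphism.

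For the module axioms, additivity in $(\alpha,a)$ is immediate, and distributivity over addition in $A$ together with the unit axiom (using $\varphi^j(1)=1$ and $d\varphi^j(1)=0$) are straightforward because $\varphi^j$ and $d$ are additive. The one computation with real content is associativity, $(xy)\cdot(\alpha,a)=x\cdot\bigl(y\cdot(\alpha,a)\bigr)$. Expanding the right-hand side, the second coordinates agree because $\varphi^j$ is multiplicative, while the first coordinate produces $\varphi^j(xy)\alpha-\frac{1}{p^j}a\bigl(\varphi^j(x)\,d\varphi^j(y)+\varphi^j(y)\,d\varphi^j(x)\bigr)$, and the parenthesized term is $d\bigl(\varphi^j(x)\varphi^j(y)\bigr)=d\varphi^j(xy)$ by the Leibniz rule and multiplicativity of $\varphi^j$; this matches the first coordinate of $(xy)\cdot(\alpha,a)$.

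Finally, to see that $h_j$ is $A$-linear: additivity $h_j(a+b)=h_j(a)+h_j(b)$ is clear. For scalars, recall that the $A$-action on the source is $x\cdot a=\varphi^j(x)a$; thus $h_j(x\cdot a)=\bigl(-d(\varphi^j(x)a),\ p^j\varphi^j(x)a\bigr)$, whereas $x\cdot h_j(a)=x\cdot(-da,\,p^ja)=\bigl(-\varphi^j(x)\,da-a\,d\varphi^j(x),\ p^j\varphi^j(x)a\bigr)$. The second coordinates are equal, and the first coordinates agree by the Leibniz rule $d(\varphi^j(x)a)=\varphi^j(x)\,da+a\,d\varphi^j(x)$. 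This completes the verification.

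I do not expect any genuine obstacle here: once Proposition~\ref{Omega bijection} is available to justify dividing by $p^j$ inside $\Omega^1_A$, the entire statement reduces to two short applications of the Leibniz rule. The only subtlety worth flagging explicitly in the writeup is precisely this well-definedness of the factor $\frac{1}{p^j}$ occurring in the module structure.
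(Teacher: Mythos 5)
Your proof is correct and follows essentially the same route as the paper's: both verify associativity of the module structure and $A$-linearity of $h_j$ by expanding and applying the Leibniz rule, with the $1/p^j$ factor canceling against $p^j$ in exactly the same spot. Your explicit flag that Proposition~\ref{Omega bijection} is what makes the formula meaningful is the same observation the paper records in the remark immediately following the lemma.
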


\begin{remark}
For any element $z \in \Omega^1_A$, the term $\frac{1}{p^j} z$ makes sense in $\Omega^1_A$, because multiplication by $p$ is a bijection on $\Omega^1_A$.  
\end{remark}

\begin{proof}
We first check that the right-hand side is actually an $A$-module with respect to the structure we described.  It's clear that $(x_1 + x_2) \cdot (\alpha, a) = x_1 \cdot (\alpha,a) + x_2 \cdot (\alpha, a)$ and that $x \cdot \left( (\alpha_1, a_1) + (\alpha_2, a_2) \right) = x \cdot  (\alpha_1, a_1) + x \cdot (\alpha_2, a_2)$.  Next we compute
\begin{align*}
x_1 \cdot (x_2 \cdot (\alpha, a) )&= x_1 \cdot \left(\varphi^j(x_2) \alpha - \frac{1}{p^j} a d\varphi^j(x_2) , \varphi^j(x_2) a\right) \\
&= \left(\varphi^j(x_1) \left( \varphi^j(x_2) \alpha - \frac{1}{p^j} a d\varphi^j(x_2) \right) -  \frac{1}{p^j}   \varphi^j(x_2) ad\varphi^j(x_1), \varphi^j(x_1) \varphi^j(x_2) a\right) \\
&= \left(\varphi^j(x_1 x_2) \alpha - \frac{1}{p^j} a \varphi^j(x_1)d\varphi^j(x_2)  -  \frac{1}{p^j} a  \varphi^j(x_2) d\varphi^j(x_1), \varphi^j(x_1x_2) a\right) \\
&= (x_1 x_2) \cdot (\alpha, a).
\end{align*} 
Notice that so far the $1/p^j$ factor has played no role.  

Next we check that the proposed map is an $A$-module homomorphism; this is where the $1/p^j$ factor becomes important.  The map is clearly additive.  We then check that, on one hand, 
\begin{align*}
\varphi^j(x) a &\mapsto (-d(\varphi^j(x) a), p^j \varphi^j(x) a) \\
&= (-\varphi^j(x) d(a) - a d(\varphi^j(x)), p^j \varphi^j(x) a), \\
\intertext{and on the other hand,}
x \cdot (-da, p^j a) &= \left( -\varphi^j(x) da - \frac{1}{p^j} p^j a d(\varphi^j(x)), \varphi^j(x) p^j a \right).
\end{align*}
\end{proof} 

Let $M_j$ denote the cokernel of the $A$-module homomorphism $h_j$ from Lemma~\ref{Mj module structure}.  (This module is the analogue of what is denoted $_{h}W_{n}\omega^i_{(R,M)}$ in \cite[Section~3.2]{HM03}.)  We are going to describe the de\thinspace Rham-Witt complex over $A$ in terms of these modules $M_j$.  First we describe an $A$-module homomorphism $\Omega^1_A \rightarrow W_n \Omega^1_A$.  

Given any ring homomorphism $R \rightarrow S$, there is an induced $R$-module homomorphism $\Omega^1_R \rightarrow \Omega^1_S$.  In what follows, we will often use the following special case.  Let $s_{\varphi}: A \rightarrow W(A)$ be the ring homomorphism described in Proposition~\ref{sF map}.  For every $n \geq 1$, composing $s_{\varphi}$ with the restriction map induces a ring homomorphism $s_{\varphi}: A \rightarrow W_n(A)$ and hence an $A$-module homomorphism $s_{\varphi}: \Omega^1_A \rightarrow \Omega^1_{W_n(A)} \rightarrow W_n \Omega^1_A$.  If we want to be explicit about the codomain, we write $s_{\varphi,n}$ instead of $s_{\varphi}$.    

\begin{lemma} \label{sF degree one}
For every integer $n \geq 2$, the two $A$-module homomorphisms $s_{\varphi,n-1} \circ \varphi \circ \frac{1}{p}$ and $F \circ s_{\varphi,n}$ mapping $\Omega^1_A \rightarrow W_{n-1} \Omega^1_A$ are equal.   
\end{lemma}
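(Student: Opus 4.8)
The plan is to reduce everything to a single observation: the two maps coincide after multiplication by $p$, and the spurious factor of $p$ can then be cancelled by dividing inside $\Omega^1_A$, where multiplication by $p$ is invertible by Proposition~\ref{Omega bijection}. Write $\psi_1 = F \circ s_{\varphi,n}$ and $\psi_2 = s_{\varphi,n-1} \circ \varphi \circ \frac{1}{p}$, both additive maps $\Omega^1_A \to W_{n-1}\Omega^1_A$. I would prove
\[
p\,\psi_1 \;=\; p\,\psi_2 \;=\; s_{\varphi,n-1} \circ \varphi \colon \Omega^1_A \longrightarrow W_{n-1}\Omega^1_A,
\]
where $\varphi \colon \Omega^1_A \to \Omega^1_A$ is the map on K\"ahler differentials induced by the ring homomorphism $\varphi \colon A \to A$, so that $\varphi(a\,db) = \varphi(a)\,d\varphi(b)$.

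For $p\,\psi_1$ the key computation goes as follows. Recall that $s_{\varphi,n}$ commutes with $d$ (it is the composite of the functorial map $\Omega^1_A \to \Omega^1_{W_n(A)}$, $a\,db \mapsto s_{\varphi}(a)\,ds_{\varphi}(b)$, with the canonical differential graded surjection $\Omega^1_{W_n(A)} \to W_n\Omega^1_A$), and similarly $d(s_{\varphi}(c)) = s_{\varphi,n-1}(dc)$ in $W_{n-1}\Omega^1_A$. Using that $F$ is multiplicative, the relation $dF = pFd$ from Proposition~\ref{Witt complex relations}, and the identity $F \circ s_{\varphi} = s_{\varphi} \circ \varphi$ from Proposition~\ref{sF map} (which descends to each finite level because $F$ commutes with the restriction maps), one gets, for $a,b \in A$,
\[
p\,\psi_1(a\,db) = p\,F\big(s_{\varphi}(a)\,ds_{\varphi}(b)\big) = F\big(s_{\varphi}(a)\big)\cdot p\,F\big(ds_{\varphi}(b)\big) = s_{\varphi}(\varphi(a))\cdot d\big(F s_{\varphi}(b)\big) = s_{\varphi}(\varphi(a))\,ds_{\varphi}(\varphi(b)),
\]
and the right-hand side is precisely $s_{\varphi,n-1}(\varphi(a)\,d\varphi(b)) = (s_{\varphi,n-1}\circ\varphi)(a\,db)$. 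Since the elements $a\,db$ generate $\Omega^1_A$ as an abelian group and both sides are additive, this yields $p\,\psi_1 = s_{\varphi,n-1}\circ\varphi$ on all of $\Omega^1_A$.

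The equality $p\,\psi_2 = s_{\varphi,n-1}\circ\varphi$ is then formal: for $\omega \in \Omega^1_A$ one has $p\cdot\psi_2(\omega) = p\cdot s_{\varphi,n-1}\big(\varphi(\tfrac{1}{p}\omega)\big) = s_{\varphi,n-1}\big(\varphi(p\cdot\tfrac{1}{p}\omega)\big) = s_{\varphi,n-1}(\varphi(\omega))$, using only additivity of $s_{\varphi,n-1}$ and of $\varphi$. Hence $p(\psi_1-\psi_2) = 0$, and for an arbitrary $\omega \in \Omega^1_A$ Proposition~\ref{Omega bijection} produces $\omega' \in \Omega^1_A$ with $p\omega' = \omega$, giving $(\psi_1-\psi_2)(\omega) = (\psi_1-\psi_2)(p\omega') = p\,(\psi_1-\psi_2)(\omega') = 0$. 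Thus $\psi_1 = \psi_2$. The one real obstacle is conceptual rather than computational: one cannot legitimately divide by $p$ inside $W_{n-1}\Omega^1_A$, which has $p$-torsion, so the whole argument must be arranged so that the cancellation of the factor $p$ happens back in $\Omega^1_A$ via Proposition~\ref{Omega bijection}; seeing that this is exactly where that proposition must be used is the crux.
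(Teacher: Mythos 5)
Your proof is correct and is essentially the argument the paper has in mind: you compute on generators $a\,db$ using exactly the two relations the paper cites, namely $dF = pFd$ and $F \circ s_{\varphi} = s_{\varphi} \circ \varphi$, together with multiplicativity of $F$. The only real difference is that you make explicit where Proposition~\ref{Omega bijection} enters (to cancel the spurious factor of $p$ back in $\Omega^1_A$, since $W_{n-1}\Omega^1_A$ has $p$-torsion), a point the paper's terse proof leaves implicit.
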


\begin{proof}
It suffices to prove the images of a term $a_0 da_1$ are equal, and this follows from the relationships $dF = pFd$ and $s_{\varphi} \circ \varphi = F \circ s_{\varphi}$.
\end{proof}

\begin{lemma} \label{extend to morphism} 
Fix integers $n \geq j \geq 1$ and let $M_j$ be the cokernel of the $A$-module homomorphism $h_j$ from Lemma~\ref{Mj module structure}.  Consider $W_{n+1} \Omega^1_A$ as an $A$-module using the map $s_{\varphi}: A \rightarrow W(A)$.  The map
\begin{align*}
M_j &\rightarrow W_{n+1} \Omega^1_A, \\
(\alpha, a) &\mapsto V^j(s_{\varphi}(\alpha)) + dV^j(s_{\varphi}(a))
\end{align*}
is an $A$-module homomorphism.  
\end{lemma}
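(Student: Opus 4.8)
The plan is to verify the three things that are being asserted: that the assignment $(\alpha,a)\mapsto V^j(s_\varphi(\alpha))+dV^j(s_\varphi(a))$ annihilates the image of $h_j$ (so the map descends to $M_j$), that it is additive, and that it is $A$-linear when the source carries the $M_j$-module structure of Lemma~\ref{Mj module structure} and the target carries the $s_\varphi$-module structure. Additivity is immediate, since $s_\varphi$, $d$ and $V^j$ are all additive. For well-definedness I would compute the image of a generator $h_j(b)=(-db,\,p^jb)$ of $h_j(A)$. Since $s_\varphi\colon A\to W(A)$ is a ring homomorphism it is additive, and since it is compatible with $d$ one has $s_\varphi(db)=d\,s_\varphi(b)$ in $W_{n+1-j}\Omega^1_A$; hence the image of $h_j(b)$ is $-V^j(d\,s_\varphi(b))+p^j\,dV^j(s_\varphi(b))$. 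Iterating the relation $Vd=pdV$ from Proposition~\ref{Witt complex relations} $j$ times gives $V^jd=p^j\,dV^j$, so the two terms cancel and the map is well defined on $M_j$.

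The substantive point is $A$-linearity. Fix $x\in A$ and $(\alpha,a)\in\Omega^1_A\oplus A$, and recall that in $M_j$ one has $x\cdot(\alpha,a)=\bigl(\varphi^j(x)\alpha-\tfrac1{p^j}a\,d\varphi^j(x),\ \varphi^j(x)a\bigr)$, so the image of $x\cdot(\alpha,a)$ equals $V^j(s_\varphi(\varphi^j(x)\alpha))-V^j\bigl(s_\varphi(\tfrac1{p^j}a\,d\varphi^j(x))\bigr)+dV^j(s_\varphi(\varphi^j(x)a))$. On the other side, $s_\varphi(x)$ times the image of $(\alpha,a)$ is a sum of two products. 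For $s_\varphi(x)\cdot V^j(s_\varphi(\alpha))$ I would use that $V$ is a morphism of modules, that is $V^j(F^j(\omega)\eta)=\omega\,V^j(\eta)$, together with $F^j\circ s_\varphi=s_\varphi\circ\varphi^j$ in degree zero (which follows from $F\lambda=\lambda F$ and Proposition~\ref{sF map}), to conclude $s_\varphi(x)\,V^j(s_\varphi(\alpha))=V^j(s_\varphi(\varphi^j(x)\alpha))$. For $s_\varphi(x)\cdot dV^j(s_\varphi(a))$ I would run the Leibniz rule backwards, writing it as $d\bigl(s_\varphi(x)\,V^j(s_\varphi(a))\bigr)-V^j(s_\varphi(a))\,d\,s_\varphi(x)$; the first term simplifies to $dV^j(s_\varphi(\varphi^j(x)a))$ exactly as above, and the second, using the module property for right multiplication, equals $V^j\bigl(s_\varphi(a)\,F^j(s_\varphi(dx))\bigr)$.

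The key input at this last step — and the main obstacle — is that $F$ interacts with $s_\varphi$ in \emph{two genuinely different ways}: on degree-zero elements $s_\varphi$ commutes with Frobenius on the nose, but as a map $\Omega^1_A\to W_\cdot\Omega^1_A$ it commutes with Frobenius only after inserting a factor $\tfrac1p$. Concretely, iterating Lemma~\ref{sF degree one} $j$ times gives $F^j(s_\varphi(\omega))=s_\varphi\bigl(\tfrac1{p^j}\varphi^j(\omega)\bigr)$ for $\omega\in\Omega^1_A$ (this makes sense because multiplication by $p$ is invertible on $\Omega^1_A$ by Proposition~\ref{Omega bijection}), so $F^j(s_\varphi(dx))=s_\varphi\bigl(\tfrac1{p^j}d\varphi^j(x)\bigr)$. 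Feeding this back, $V^j(s_\varphi(a))\,d\,s_\varphi(x)=V^j\bigl(s_\varphi(\tfrac1{p^j}a\,d\varphi^j(x))\bigr)$, which is precisely the correction term appearing in the image of $x\cdot(\alpha,a)$; comparing the two sides term by term then finishes the proof of $A$-linearity. The remaining care is purely bookkeeping of levels: the hypothesis $n\ge j$ ensures that $V^j$ lands in $W_{n+1}\Omega^1_A$ and that each of the $j$ applications of Lemma~\ref{sF degree one} refers only to levels $\ge 1$, and everything else is routine manipulation with the Witt-complex relations.
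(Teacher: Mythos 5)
Your proof is correct and takes essentially the same approach as the paper: both reduce to the Witt-complex relations $V(F(\omega)\eta)=\omega V(\eta)$, the Leibniz rule, and $dF=pFd$ (the latter packaged in your write-up as an iterated application of Lemma~\ref{sF degree one}, which the paper instead applies inline). The only difference is cosmetic — you start from $s_\varphi(x)$ times the image of $(\alpha,a)$ and rewrite it as the image of $x\cdot(\alpha,a)$, whereas the paper runs the same chain of equalities in the opposite direction.
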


\begin{proof}
The map is clearly well-defined, because of the relation $p^jdV^j = V^jd$.  We have
\begin{align*}
x \cdot (\alpha, a) = (\varphi^j(x) &\alpha - \frac{1}{p^j} a d\varphi^j(x), \varphi^j(x) a)\\
 &\mapsto V^j \circ s_{\varphi}\bigg( \varphi^j(x) \alpha - \frac{1}{p^j} a d\varphi^j(x) \bigg) + dV^j \circ s_{\varphi} \bigg( \varphi^j(x) a \bigg) \\
&= V^j \bigg(F^j (s_{\varphi}(x)) s_{\varphi}(\alpha)\bigg) - V^j \bigg( \frac{1}{p^j} s_{\varphi}(a) dF^j(s_{\varphi}(x)) \bigg) + dV^j \bigg( F^j(s_{\varphi}(x)) s_{\varphi}(a) \bigg) \\
&= V^j \bigg(F^j (s_{\varphi}(x)) s_{\varphi}(\alpha)\bigg) - V^j \bigg( s_{\varphi}(a) F^jds_{\varphi}(x) \bigg) + d\bigg(s_{\varphi}(x) V^j(s_{\varphi}(a))\bigg)\\
&= s_{\varphi}(x) V^j ( s_{\varphi}(\alpha)) - V^j \left( s_{\varphi}(a) \right) ds_{\varphi}(x)  + V^j(s_{\varphi}(a))ds_{\varphi}(x) + s_{\varphi}(x) dV^j(s_{\varphi}(a))\\
&= s_{\varphi}(x) \bigg(V^j ( s_{\varphi}(\alpha))  +  dV^j(s_{\varphi}(a))\bigg).
\end{align*}  
\end{proof}

\begin{proposition} \label{hWn}
Continue to assume $A = W(k)$ where $k$ is a perfect ring of odd characteristic~$p$.  Fix any integer $n \geq 1$, and let $M_n$ be the cokernel of the $A$-module homomorphism from Lemma~\ref{Mj module structure}.  Consider $W_n \Omega^1_A$ and $W_{n+1}\Omega^1_A$ as $A$-modules via the ring homomorphism $s_{\varphi}: A \rightarrow W(A)$.  We have a short exact sequence of $A$-modules 
\begin{equation} \label{hW sequence}
0 \rightarrow M_n \rightarrow W_{n+1}\Omega^1_A \stackrel{R}{\longrightarrow} W_n\Omega^1_A \rightarrow 0,
\end{equation}
where the first map is given by 
\[
(\alpha, a)  \mapsto V^n(s_{\varphi}(\alpha)) + dV^n(s_{\varphi}(a)).
\]  
\end{proposition}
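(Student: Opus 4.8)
The plan is to check exactness of~\eqref{hW sequence} spot by spot, postponing the injectivity of the first map $\iota\colon M_n \to W_{n+1}\Omega^1_A$, which is the only step that is not a formal manipulation of Witt-complex relations. First, $\iota$ is a well-defined $A$-module homomorphism by the $j = n$ case of Lemma~\ref{extend to morphism}: since $s_{\varphi}\colon A \to W_1(A) = A$ is the identity, the expression $V^n\bigl(s_{\varphi}(\alpha)\bigr) + dV^n\bigl(s_{\varphi}(a)\bigr)$ of that lemma is literally $V^n(\alpha) + dV^n(a)$. Surjectivity of $R$ is a standard property of the de\thinspace Rham-Witt complex (for instance, it follows from surjectivity of $\Omega^1_{W_{n+1}(A)} \to W_{n+1}\Omega^1_A$ together with surjectivity of restriction on Witt vectors). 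The identity $R \circ \iota = 0$ holds because restriction kills $V^n(\Omega^1_A)$ and $V^n(A)$ --- the easy inclusion complementary to Proposition~\ref{Fil} --- so that $R(V^n(\alpha)) = 0$ and $R(dV^n(a)) = d\bigl(R(V^n(a))\bigr) = 0$. Finally, $\ker R \subseteq \im\iota$ is precisely Proposition~\ref{Fil} with $d = 1$: an element of $\ker R$ can be written $V^n(\alpha) + dV^n(\beta)$ with $\alpha \in \Omega^1_A$ and $\beta \in A$, hence equals $\iota\bigl((\alpha,\beta) + h_n(A)\bigr)$.

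It remains to prove $\iota$ injective, which is the substance of the proposition. Suppose $V^n(\alpha) + dV^n(a) = 0$ in $W_{n+1}\Omega^1_A$. Applying $F^n$ and using $F^nV^n = p^n$, $F^ndV^n = d$, and $F(p) = p$ --- all formal from $FV = p$ and $FdV = d$ --- yields $p^n\alpha + da = 0$ in $\Omega^1_A$. This alone does not suffice, since the set of pairs $(\alpha,a)$ with $p^n\alpha + da = 0$ is strictly larger than $h_n(A)$ whenever $A \ne p^nA$. The extra leverage is the $p$-adically separated Witt complex $E_{\cdot}^{\bullet}$ together with the canonical morphism of Witt complexes $\eta\colon W_{\cdot}\Omega^{\bullet}_A \to E_{\cdot}^{\bullet}$ coming from initiality of $W_{\cdot}\Omega^{\bullet}_A$. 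Because $\eta$ commutes with $V$ and $E_1^1 = 0$, one has $\eta(V^n(\alpha)) = V^n(\eta(\alpha)) = 0$. Because $\eta$ commutes with $d$ and $V$ and is the identity in degree zero, and because the Verschiebung relations of Section~\ref{results A} give $V^n(a) = s_{\varphi}\bigl(\varphi^{-n}(a)\bigr)\,V^n(1)$ in $W_{n+1}(A)$, the definition of the differential on $E_{\cdot}^{\bullet}$ gives $\eta(dV^n(a)) = \bigl(\varphi^{-n}(a) + p^nA\bigr)\cdot dV^n(1)$. Applying $\eta$ to $V^n(\alpha) + dV^n(a) = 0$ thus forces $\bigl(\varphi^{-n}(a) + p^nA\bigr)\cdot dV^n(1) = 0$ in $E_{n+1}^1 = \prod_{i=0}^{n} A/p^iA \cdot dV^i(1)$, and the $i = n$ component of this equation says $\varphi^{-n}(a) \in p^nA$, whence $a \in p^nA$ because $\varphi$ is an automorphism of $A$ with $\varphi(pA) = pA$.

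From here the argument closes formally: write $a = p^nx$ with $x \in A$; then $p^n\alpha + da = 0$ becomes $p^n(\alpha + dx) = 0$ in $\Omega^1_A$, so $\alpha = -dx$ by Proposition~\ref{Omega bijection}. Hence $(\alpha,a) = (-dx, p^nx) = h_n(x) \in h_n(A)$, i.e.\ $(\alpha,a) + h_n(A) = 0$ in $M_n$, so $\iota$ is injective. I expect the main obstacle to be the single genuinely non-formal ingredient above --- extracting $a \in p^nA$ from $V^n(\alpha) + dV^n(a) = 0$ --- which is exactly where the construction of $E_{\cdot}^{\bullet}$, and in particular the fact that $dV^n(1)$ has order exactly $p^n$ in $E_{n+1}^1$, comes into play.
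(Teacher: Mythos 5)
Your proof is correct and follows the paper's overall strategy: Lemma~\ref{extend to morphism} and Proposition~\ref{Fil} reduce the claim to injectivity of the map $M_n \to W_{n+1}\Omega^1_A$, and the sole non-formal ingredient is extracting $a \in p^nA$ from $V^n(\alpha) + dV^n(a) = 0$ using the Witt complex $E_{\cdot}^{\bullet}$. Where you diverge is how you get that divisibility. The paper first observes that $V^n(\alpha)$ is divisible by arbitrarily large powers of $p$ (since $\alpha \in \Omega^1_A$ is, by Proposition~\ref{Omega bijection}), then decomposes $dV^n(a) = s_{\varphi}(a')\,dV^n(1) + V^n(1)\,ds_{\varphi}(a')$ with $a' = \varphi^{-n}(a)$, notes the second summand is also highly $p$-divisible, and finally uses the $p$-adic separatedness of $E_{n+1}^1$ (Corollary~\ref{separated corollary}) to conclude $s_{\varphi}(a')\,dV^n(1)$ maps to zero in $E_{n+1}^1$, forcing $a' \in p^nA$. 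You instead apply the canonical Witt-complex map $\eta$ directly to the hypothesis, killing $\eta(V^n(\alpha)) = V^n(\eta(\alpha))$ via the single observation $E_1^1 = 0$, and reading off $\eta(dV^n(a)) = (\varphi^{-n}(a) + p^nA)\cdot dV^n(1)$ from the definition of $d$ on $E$. That is a genuine shortcut: it eliminates the paper's divisibility bookkeeping and the explicit appeal to separatedness, replacing both with the (equivalent) structural fact that $dV^n(1)$ has exact order $p^n$ in $E_{n+1}^1$. The final step, deducing $\alpha = -dx$ from $a = p^nx$, is also slightly different --- you apply $F^n$ to get $p^n\alpha + da = 0$ in $\Omega^1_A$ and then use $p$-torsion-freeness of $\Omega^1_A$, whereas the paper stays in $W_{n+1}\Omega^1_A$ and uses injectivity of $V^n$ --- but these are formally equivalent and both rest on Proposition~\ref{Omega bijection}.
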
 

\begin{proof}
Using Lemma~\ref{extend to morphism}, we see that these are maps of $A$-modules.  Then
using Proposition~\ref{Fil}, we reduce to proving that the map $M_n \rightarrow W_{n+1} \Omega^1_A$ is injective.  Assume $\alpha \in \Omega^1_A$ and $a \in A$ satisfy $V^n(s_{\varphi}(\alpha)) + dV^n(s_{\varphi}(a)) = 0 \in W_{n+1} \Omega^1_A$.  Then, because $\alpha$ is divisible by arbitrarily large powers of $p$, we have that $dV^n(s_{\varphi}(a))$ is divisible by arbitrarily large powers of $p$.  Write $a' = \varphi^{-n}(a)$.  We have
\begin{align*}
dV^n(s_{\varphi}(a)) &= d \left( s_{\varphi}(a') V^n(1) \right) \\
&= s_{\varphi}(a') dV^n(1) + V^n(1) ds_{\varphi}(a').
\end{align*}
The term $ds_{\varphi}(a')$ is divisible by arbitrarily large powers of $p$, so this implies $s_{\varphi}(a') dV^n(1)$ is divisible by arbitrarily large powers of $p$.  Thus by Corollary~\ref{separated corollary}, the image of $s_{\varphi}(a') dV^n(1)$ is equal to 0 in $E^1_{n+1}$, but then by our definition of $E^1_{n+1}$, we have that $a'$ is divisible by $p^n$, and hence so is $a = \varphi^n(a')$. 

Write $a = p^n a_0$.  We then have 
\[
0 = V^n(s_{\varphi}(\alpha)) + dV^n (s_{\varphi}(p^n a_0)) = V^n ( s_{\varphi}(\alpha + da_0)). 
\]
By Proposition~\ref{Omega bijection}, the map $p^n: \Omega^1_A \rightarrow \Omega^1_A$ is injective.  Because $p^n = F^n V^n$, we have that $V^n$ is also injective.   This shows that $\alpha = -da_0$, as claimed.
\end{proof}

\begin{remark}
Proposition~\ref{hWn} is the main result of this section.  The exactness claimed is mostly analogous to \cite[Proposition~3.2.6]{HM03}; the most interesting part of our result is the fact that the map $A \rightarrow \Omega^1_A \oplus A$ surjects onto the kernel of the map $\Omega^1_A \oplus A \rightarrow W_{n+1}\Omega^1_A$.  This result is difficult to prove because in general it is difficult to prove that elements in the de\thinspace Rham-Witt complex are non-zero.  See \cite[Proposition~2.2.1]{Hes05} for a result proving this same exactness in the context of the log de\thinspace Rham-Witt complex over the ring of integers in an algebraic closure of a local field.  See also \cite[Th\'{e}or\`{e}me~I.3.8]{Ill79} for a version of this result which is valid in characteristic~$p$.
\end{remark}

Using induction, we're able to give the following explicit description of $W_n\Omega^1_A$.  The key fact used by the construction is that the maps $\Omega^1_A \oplus A \rightarrow W_j \Omega^1_A$ given by $(\alpha, a) \mapsto V^{j-1}(\alpha) + dV^{j-1}(a)$ can be extended to maps into $W_n\Omega^1_A$ using $s_{\varphi}: A \rightarrow W(A)$.  

\begin{corollary} \label{de Rham-Witt as A-module}
Continue to assume $A = W(k)$ where $k$ is a perfect ring of odd characteristic~$p$.   View $W_{n+1}\Omega^1_A$ as an $A$-module using the ring homomorphism $s_{\varphi}: A \rightarrow W(A)$.  Let $M_0 = \Omega^1_A$, and for every $j \geq 1$, let $M_j = (\Omega^1_A \oplus A)/h_j(A)$ be the cokernel of the $A$-module homomorphism $h_j: a \mapsto (-da, p^j a)$ from Lemma~\ref{Mj module structure}.  For every integer $n \geq 2$, the map 
\begin{align*}
\prod_{j = 0}^n M_j &\rightarrow W_{n+1} \Omega^1_A \\
\intertext{induced by}
M_0 &\rightarrow W_{n+1} \Omega^1_A,\\
\alpha_0 &\mapsto s_{\varphi}(\alpha_0) \\
\intertext{and}
M_j &\rightarrow W_{n+1} \Omega^1_A \text{ for } j \geq 1, \\ 
(\alpha_j, a_j)  &\mapsto V^j(s_{\varphi}(\alpha_j)) + dV^j(s_{\varphi}(a_j))
\end{align*}
is an isomorphism of $A$-modules.  
\end{corollary}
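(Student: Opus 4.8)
The plan is to prove the statement by induction on $n$, with Proposition~\ref{hWn} supplying the inductive step; everything else is a matter of checking that the restriction maps carry the level-$(n+1)$ description onto the level-$n$ one. Write $\Phi_n\colon \prod_{j=0}^{n} M_j \to W_{n+1}\Omega^1_A$ for the map in the statement. I would take as base case $n=0$: here $M_0=\Omega^1_A$, $W_1\Omega^1_A=\Omega^1_A$, and $\Phi_0$ is $\alpha_0\mapsto s_{\varphi,1}(\alpha_0)$; since the first Witt component of $s_\varphi(a)$ is $a$, the map $s_{\varphi,1}\colon A\to W_1(A)=A$ is the identity, so $\Phi_0=\mathrm{id}$. (This also disposes of $n=1$ once the inductive step is in place, so the range $n\ge 2$ in the statement is not special.)

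For the inductive step I would assume $\Phi_{n-1}$ is an isomorphism. Proposition~\ref{hWn} gives the short exact sequence of $A$-modules
\[
0 \to M_n \xrightarrow{\ \iota\ } W_{n+1}\Omega^1_A \xrightarrow{\ R\ } W_n\Omega^1_A \to 0 ,
\]
in which $\iota$, namely $(\alpha,a)+h_n(A)\mapsto V^n(\alpha)+dV^n(a)$, is exactly the $M_n$-component of $\Phi_n$ (because $s_{\varphi,1}=\mathrm{id}$ makes $V^n(s_\varphi(\alpha))=V^n(\alpha)$). I would collect the remaining components of $\Phi_n$ into an $A$-module homomorphism $\psi\colon\prod_{j=0}^{n-1}M_j\to W_{n+1}\Omega^1_A$; each component is $A$-linear by Lemma~\ref{extend to morphism} for $1\le j\le n-1$, and for $j=0$ it is $s_{\varphi,n+1}\colon\Omega^1_A\to W_{n+1}\Omega^1_A$, which is $A$-linear by functoriality of $\Omega^1$ (with $W_{n+1}\Omega^1_A$ regarded as an $A$-module via $s_\varphi$). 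The key point is then the identity $R\circ\psi=\Phi_{n-1}$: since $R$ commutes with $d$ and with $V$, and since $R\circ s_{\varphi,n+1}=s_{\varphi,n}$ (the maps $s_\varphi$ are compatible with the restriction pro-structure on $W_\bullet$), applying $R$ to $V^j(s_\varphi(\alpha_j))+dV^j(s_\varphi(a_j))$ returns the corresponding level-$n$ expression, which is the $M_j$-component of $\Phi_{n-1}$, and for $j=0$ it sends $s_{\varphi,n+1}(\alpha_0)$ to $s_{\varphi,n}(\alpha_0)$. As $\Phi_{n-1}$ is an isomorphism, $\psi\circ\Phi_{n-1}^{-1}$ is an $A$-linear section of $R$; hence the sequence splits, and the resulting composite isomorphism $\prod_{j=0}^n M_j = M_n\oplus\prod_{j=0}^{n-1}M_j \xrightarrow{\sim} W_{n+1}\Omega^1_A$, $(\omega,\xi)\mapsto\iota(\omega)+\psi(\xi)$, is precisely $\Phi_n$.

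The main obstacle has already been overcome inside Proposition~\ref{hWn} --- specifically its non-formal input, the injectivity of $M_n\to W_{n+1}\Omega^1_A$, which relies on Proposition~\ref{Omega bijection} and Corollary~\ref{separated corollary}. What is left here is bookkeeping: confirming that $R$ intertwines the two descriptions, for which the only mildly delicate ingredient is $R\circ s_{\varphi,n+1}=s_{\varphi,n}$ together with the commutation of $V^j$ and $d$ with restriction, both immediate from the Witt complex axioms and Proposition~\ref{sF map}. One should also keep track of the indices implicit in $V^j(s_\varphi(-))$ --- inside $\Phi_n$ the $s_\varphi$ in the $j$-th slot means $s_{\varphi,n+1-j}$, so that $V^j$ lands in level $n+1$ --- but that is exactly the compatibility already used above.
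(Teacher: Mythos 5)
Your proposal is correct and uses essentially the same approach as the paper: induction on $n$ driven by the short exact sequence of Proposition~\ref{hWn}, with the key verification being that the restriction map carries the level-$(n+1)$ formulas to the level-$n$ ones. The paper packages this as a commutative diagram with exact rows and invokes the Five Lemma, whereas you construct the splitting $\psi\circ\Phi_{n-1}^{-1}$ explicitly and identify $\Phi_n$ with $\iota\oplus\psi$ directly — a cosmetic variant, resting on the same compatibility $R\circ s_{\varphi,n+1}=s_{\varphi,n}$ and the commutation of $R$ with $V^j$ and $d$.
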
 

\begin{proof}
We know that the map is a homomorphism of $A$-modules by Lemma~\ref{extend to morphism}.  
For every integer $n \geq 1$, consider the complex
\[
\xymatrix{
0 \ar[r] & M_n \ar[d] \ar[r] &  \prod_{j = 0}^n M_j  \ar[d] \ar[r]   \ar[r] & \prod_{j = 0}^{n-1} M_j \ar[d] \ar[r] & 0\\
0 \ar[r] & M_n \ar[r] & W_{n+1}\Omega^1_A \ar[r] & W_{n} \Omega^1_A \ar[r]   & 0.\\
}
\]
The top row is clearly exact.  The bottom row is exact by Proposition~\ref{hW sequence}.  The right-hand vertical map is an isomorphism by induction.
Thus we're finished by the Five Lemma.
\end{proof} 

Similar, but easier, arguments work also for degrees $d \geq 2$.  Our applications involve degree $d = 1$, so we indicate the results more briefly.

\begin{proposition}
For every $d \geq 2$, $n \geq 1$, we have an exact sequence of $A$-modules
\[
0 \rightarrow \Omega^d_A \stackrel{V^n}{\longrightarrow} W_{n+1} \Omega^d_A \rightarrow W_n \Omega^d_A \rightarrow 0,
\]
where the $A$-module structure on $\Omega^d_A$ is given by $a \cdot \alpha := F^n(a)\alpha$, and where the $A$-module structure on the other two pieces is induced by $s_{\varphi}: A \rightarrow W(A)$.
\end{proposition}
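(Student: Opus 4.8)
The strategy is to imitate the degree-one argument (Proposition~\ref{hWn} and Corollary~\ref{de Rham-Witt as A-module}), which should go through more smoothly here because of two simplifications that occur in degrees $d \geq 2$: first, the analogue of the module $M_n$ degenerates, and second, the subtle term $dV^n(\beta)$ with $\beta \in \Omega^{d-1}_A$ will not obstruct injectivity in the way it does when $d = 1$. Concretely, I would first invoke Proposition~\ref{Fil}: any element of the kernel of restriction $W_{n+1}\Omega^d_A \to W_n\Omega^d_A$ has the form $V^n(\alpha) + dV^n(\beta)$ with $\alpha \in \Omega^d_A$, $\beta \in \Omega^{d-1}_A$. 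The first point is that $dV^n(\beta)$ is already of the form $V^n(\alpha')$ for some $\alpha' \in \Omega^d_A$: write $\beta = \sum b_0\,db_1\cdots db_{d-1}$, put $b_i' = \varphi^{-n}(b_i)$, and use $dV^n = V^n d$ on $0$-forms together with the Leibniz rule and the relation $V^n(F^n(\omega)\eta) = \omega V^n(\eta)$ to rewrite $dV^n(\beta)$ as $V^n$ of a $d$-form built from $s_{\varphi}(b_i')$ and $ds_{\varphi}(b_i')$; more simply, since $p^n dV^n = V^n d$ and multiplication by $p^n$ is invertible on $\Omega^d_A$ (apply Proposition~\ref{Omega bijection} repeatedly, noting $\Omega^d_A = \bigwedge^d \Omega^1_A$), we get $dV^n(\beta) = V^n\big(\tfrac{1}{p^n} d\beta\big)$. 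Hence every kernel element is $V^n(\gamma)$ for a single $\gamma \in \Omega^d_A$, so the map $\Omega^d_A \xrightarrow{V^n} W_{n+1}\Omega^d_A$ surjects onto the kernel of restriction.

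For exactness on the left, I need $V^n \colon \Omega^d_A \to W_{n+1}\Omega^d_A$ to be injective. As in the proof of Proposition~\ref{hWn}, this follows from the identity $p^n = F^n V^n$ together with the fact that multiplication by $p$ is bijective on $\Omega^1_A$ (Proposition~\ref{Omega bijection}) and hence on $\Omega^d_A$: if $V^n(\gamma) = 0$ then $p^n\gamma = F^n V^n(\gamma) = 0$, forcing $\gamma = 0$. This is where the degree $d \geq 2$ case is genuinely easier than $d = 1$: there is no need for the delicate argument using the $p$-adic separatedness of $E^1_{n+1}$ to show that $a$ is divisible by $p^n$, because the $dV^n$ term has been absorbed into the $V^n$ term from the outset.

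Finally I must check that the maps are $A$-module homomorphisms for the stated module structures, where $\Omega^d_A$ carries the twisted structure $a \cdot \alpha = F^n(a)\alpha$ — or more precisely $a \cdot \alpha := \varphi^n(a)\alpha$ under the identification $W_1(A) = A$, compatibly with $s_{\varphi}$ and $F^n$ — and the other two terms carry the structure induced by $s_{\varphi}\colon A \to W(A)$. This is the analogue of Lemma~\ref{extend to morphism} and is a short computation: for $x \in A$, using $V^n(F^n(s_{\varphi}(x))\,\omega) = s_{\varphi}(x) V^n(\omega)$ and $F^n s_{\varphi} = s_{\varphi}\varphi^n$, one gets $V^n(\varphi^n(x)\gamma) = V^n(F^n(s_{\varphi}(x))s_{\varphi}(\gamma)\text{-part}) = s_{\varphi}(x) V^n(\gamma)$ after expanding $\gamma$ into its monomials $a_0\,da_1\cdots da_d$ and applying $s_{\varphi}$ to each factor; the restriction map $R$ is visibly $W_{n+1}(A)$-linear, hence $A$-linear via $s_{\varphi}$. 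Assembling these, the sequence $0 \to \Omega^d_A \xrightarrow{V^n} W_{n+1}\Omega^d_A \xrightarrow{R} W_n\Omega^d_A \to 0$ is exact. The main obstacle, such as it is, is purely bookkeeping: making sure the rewriting $dV^n(\beta) = V^n(\tfrac{1}{p^n}d\beta)$ is legitimate (it is, since $\tfrac{1}{p^n}d\beta$ makes sense in $\Omega^d_A$ by bijectivity of $p$, and applying $F^nV^n$ to both $dV^n(\beta)$ and $V^n(\tfrac{1}{p^n}d\beta)$ gives $p^n dV^n(\beta) = V^nd\beta$, which holds by Proposition~\ref{Witt complex relations}) and that the twisted $A$-module structure on $\Omega^d_A$ is compatible with $s_{\varphi}$ — neither of which presents real difficulty.
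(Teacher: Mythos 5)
Your plan is essentially the paper's, and both the statement of exactness and the injectivity argument (via $F^nV^n = p^n$ and bijectivity of $p$ on $\Omega^d_A$) match. But there is a genuine flaw in your justification of the key step, namely that $dV^n(\beta)$ lies in $V^n(\Omega^d_A)$. You assert $dV^n(\beta) = V^n\!\left(\tfrac{1}{p^n}d\beta\right)$ and propose to verify it by applying $F^nV^n$ to both sides and observing they agree; but $F^nV^n = p^n$ is \emph{not} injective on $W_{n+1}\Omega^d_A$ (for instance $dV^n(1)$ is $p^n$-torsion there), so agreement after applying $F^nV^n$ does not establish equality. Moreover, your parenthetical invokes only bijectivity of $p$ on $\Omega^d_A$, which is true for all $d \geq 1$ and hence cannot by itself be what distinguishes $d \geq 2$ from $d = 1$.

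The place where $d \geq 2$ actually enters --- and the paper flags this explicitly --- is that $\beta$ lives in $\Omega^{d-1}_A$ with $d - 1 \geq 1$, so Proposition~\ref{Omega bijection} gives bijectivity of $p$ on $\Omega^{d-1}_A$ and one may write $\beta = p^n\beta_0$. Then
\[
dV^n(\beta) \;=\; dV^n(p^n\beta_0) \;=\; p^n\, dV^n(\beta_0) \;=\; V^n(d\beta_0),
\]
using $p^n dV^n = V^n d$ from Proposition~\ref{Witt complex relations}, and so $\omega = V^n(\alpha + d\beta_0)$ with $\alpha + d\beta_0 \in \Omega^d_A$. This does yield your formula $dV^n(\beta) = V^n\!\left(\tfrac{1}{p^n}d\beta\right)$ since $d\beta_0 = \tfrac{1}{p^n}d\beta$, but the valid route is to divide $\beta$ itself by $p^n$ first (in $\Omega^{d-1}_A$), not to divide the equation $p^n dV^n(\beta) = V^n(d\beta)$ by $p^n$ inside $W_{n+1}\Omega^d_A$. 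With that correction your argument coincides with the paper's.
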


\begin{proof}
The map $V^n: \Omega^d_A \rightarrow W_{n+1} \Omega^d_A$ is injective because $F^n \circ V^n = p^n$ is injective on $\Omega^d_A$.  We must also show that if $\omega \in W_{n+1}\Omega^d_A$ is in the kernel of $R$, then we can find $\alpha \in \Omega^d_A$ such that $\omega = V^n(\alpha)$.  We know that there exist $\alpha \in \Omega^d_A$ and $\beta \in \Omega^{d-1}_A$ such that 
\[
V^n(\alpha) + dV^n(\beta) = \omega.
\]
But now we're finished, because we can write $\beta = p^n \beta_0$ for some $\beta_0 \in \Omega^{d-1}_A$.  (This is where we use that $d \geq 2$.)
\end{proof}

We can deduce the following corollary in the same way as we deduced Corollary~\ref{de Rham-Witt as A-module}.

\begin{corollary} \label{deg geq 2}
For every $d \geq 2$ and every $n \geq 1$, we have an isomorphism of $A$-modules 
\[
\prod_{i = 0}^{n-1} \Omega^d_A \cong W_n \Omega^d_A,
\]
where the $A$-module structure on the $i$-th piece is given by $a \cdot \alpha_i := \varphi^i(a) \alpha_i.$
\end{corollary}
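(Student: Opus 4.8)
The plan is to imitate the proof of Corollary~\ref{de Rham-Witt as A-module}, with the modules $M_j$ there replaced throughout by $\Omega^d_A$ and with Proposition~\ref{hWn} replaced by the short exact sequence of the preceding proposition, running an induction on $n$ that closes by the Five Lemma. For each $j\ge 0$ and each $n\ge j+1$, I would consider the map
\[
\Omega^d_A\longrightarrow W_n\Omega^d_A,\qquad \alpha\longmapsto V^j\bigl(s_{\varphi}(\alpha)\bigr),
\]
where $s_{\varphi}$ denotes the composite $\Omega^d_A\to\Omega^d_{W_{n-j}(A)}\to W_{n-j}\Omega^d_A$ induced by $s_{\varphi}:A\to W_{n-j}(A)$, after which $V^j$ raises the level to $n$; here the source is given the $A$-module structure $a\cdot\alpha=\varphi^j(a)\alpha$ and the target its $s_{\varphi}$-structure. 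That this is an $A$-module homomorphism is the evident (and easier) analogue of Lemma~\ref{extend to morphism}, since there is no $A$-summand to carry along: it follows from $V^j\bigl(F^j(s_{\varphi}(x))\,\omega\bigr)=s_{\varphi}(x)V^j(\omega)$ together with $F^j\circ s_{\varphi}=s_{\varphi}\circ\varphi^j$ from Proposition~\ref{sF map}. Summing these over $0\le j\le n-1$ — with $V^0=\mathrm{id}$, so the $j=0$ summand is just $s_{\varphi}$ — produces the map $\prod_{j=0}^{n-1}\Omega^d_A\to W_n\Omega^d_A$ that we claim is an isomorphism.

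For $n=1$ the product has the single factor $\Omega^d_A$ with its untwisted structure, and the map is the canonical isomorphism $\Omega^d_A\cong W_1\Omega^d_A$. For the inductive step, assume $\prod_{j=0}^{n-1}\Omega^d_A\to W_n\Omega^d_A$ is an isomorphism and consider the diagram
\[
\xymatrix{
0\ar[r] & \Omega^d_A\ar[d]\ar[r] & \prod_{j=0}^{n}\Omega^d_A\ar[d]\ar[r] & \prod_{j=0}^{n-1}\Omega^d_A\ar[d]\ar[r] & 0\\
0\ar[r] & \Omega^d_A\ar[r]^-{V^n} & W_{n+1}\Omega^d_A\ar[r]^-{R} & W_n\Omega^d_A\ar[r] & 0,
}
\]
whose top row is the split exact sequence given by inclusion of the $j=n$ factor and projection onto the remaining factors, whose bottom row is the exact sequence of the preceding proposition, whose middle vertical arrow is the map constructed above, and whose left vertical arrow is the identity of $\Omega^d_A$. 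The left square commutes because the $j=n$ summand of the middle map is precisely $\alpha\mapsto V^n(s_{\varphi}(\alpha))=V^n(\alpha)$, using that $s_{\varphi}$ is the identity at level one. The right square commutes because $R$ is compatible with $V$ and with $s_{\varphi}$: for $j\le n-1$ it carries the $j$-th summand of the middle map to the $j$-th summand of the level-$n$ map, while for $j=n$ we have $R\bigl(V^n(s_{\varphi}(\alpha_n))\bigr)=0$ by exactness of the bottom row; hence the right vertical arrow is the inductive isomorphism. As the two outer vertical arrows are isomorphisms, the Five Lemma forces the middle one to be an isomorphism, completing the induction.

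I expect the only genuinely fiddly step to be matching the $A$-module structures across the Five Lemma: the structure $a\cdot\alpha=F^n(a)\alpha$ placed on the kernel $\Omega^d_A$ in the preceding proposition unwinds, via $F^n\circ s_{\varphi}=s_{\varphi}\circ\varphi^n$ and the identification of $s_{\varphi}$ at level one with the identity, to $a\cdot\alpha=\varphi^n(a)\alpha$, which is exactly the structure on the $j=n$ factor of the product. Beyond this bookkeeping there is no real obstacle: all of the substantive input — in particular the injectivity of $V^n$ on $\Omega^d_A$, resting on Proposition~\ref{Omega bijection} — is already packaged into the exact sequence of the preceding proposition, which is itself obtained from Proposition~\ref{Fil}.
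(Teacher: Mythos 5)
Your proposal is correct and follows essentially the same route the paper intends, which is simply to repeat the induction-and-Five-Lemma argument of Corollary~\ref{de Rham-Witt as A-module} with $M_j$ replaced by $\Omega^d_A$ and with Proposition~\ref{hWn} replaced by the preceding short exact sequence. Your verification that each summand $\alpha\mapsto V^j(s_{\varphi}(\alpha))$ is $A$-linear (reducing to the projection formula and $F^j\circ s_{\varphi}=s_{\varphi}\circ\varphi^j$ on $A$, i.e.\ Proposition~\ref{sF map}) and your bookkeeping identifying the kernel's module structure $a\cdot\alpha=F^n(a)\alpha$ with $\varphi^n(a)\alpha$ after restricting to level one are exactly the points one must check.
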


\begin{remark}
Much of the author's intuition for the de\thinspace Rham-Witt complex comes from the cases treated in Illusie's paper \cite{Ill79}, such as the description of the de\thinspace Rham-Witt complex over $\FF_p[t_1, \ldots, t_r]$ given in \cite[Section I.2]{Ill79}.   In this case, the de\thinspace Rham-Witt complex is 0 in degrees $d > r$.  We remark that the absolute, mixed characteristic de\thinspace Rham-Witt complex we are studying is very different.  Consider the easiest case of our setup, $A = \ZZ_p = W(\FF_p)$.  Then $\Omega^1_A$ is infinite-dimensional as a $\QQ_p$-vector space by Proposition~\ref{de Rham field}.  Thus $\Omega^d_A := \bigwedge^d \, \Omega^1_A$ is non-zero for all degrees $d$.  Thus in particular $W_n \Omega^d_A$ is non-zero for all integers $d \geq 0$ and $n \geq 1$.
\end{remark}

\begin{remark} \label{M Witt complex}
Corollary~\ref{de Rham-Witt as A-module} and Corollary~\ref{deg geq 2} give an explicit description of the $A$-module structure of the Witt complex $W_{\cdot}\Omega^{\bullet}_A$.  (Notice that for a general ring $B \neq W(k)$, we cannot expect a $B$-algebra structure on $W_{\cdot} \Omega^{\bullet}_B$.) It seems worthwhile to describe the entire Witt complex structure, at least for degrees $d = 0,1$, in terms of the description from Corollary~\ref{de Rham-Witt as A-module}.  Similar descriptions could be given for higher degrees.
\begin{itemize}
\item We already know the $A$-module structure, so to describe the $W_n(A)$-algebra structure on $W_n\Omega^1_A$, it suffices by Lemma~\ref{V-combination lemma} to describe the effect of multiplication by $V^j(1)$ on $\prod M_i$.  It sends all $M_i$ with $i \leq j$ into the $M_j$ component, via the formulas 
\begin{align*}
V^j(1) \cdot \alpha & = (\varphi^j (\alpha), 0)  \in M_j \text{ for } \alpha \in M_0 = \Omega^1_A, \text{ and } \\
V^j(1) \cdot (\alpha_i, a_i) &= (p^i \varphi^{j-i}(\alpha_i) + \varphi^{j-i}(da_i),0)  \in M_j,\\
&\hspace*{.5in} \text{for } (\alpha_i, a_i)  \in M_i, \text{ where } i \leq j.
\end{align*}
When $i \geq j$, multiplication by $V^j(1)$ acts on the $M_i$ component as multiplication by $p^j$.
\item To describe the differential $d: W_n(A) \rightarrow \prod M_i$, it suffices by Lemma~\ref{V-combination lemma} to note that $d: s_{\varphi}(a) \mapsto da \in M_0 = \Omega^1_A$ and that $d : s_{\varphi}(a_j) V^j(1)  \mapsto (0, \varphi^j(a_j))  \in M_j$ for $j \geq 1$.
\item The restriction map $R: \prod_{i = 0}^{n} M_i \rightarrow  \prod_{i=0}^{n-1} M_i$ is the obvious projection map.
\item To describe the map $V: \prod_{i = 0}^{n} M_i \rightarrow \prod_{i=0}^{n+1} M_i$, we note that 
\begin{align*}
V: \alpha &\mapsto (\alpha, 0)  \in M_1, \text{ where } \alpha \in M_0 = \Omega^1_A, \text{ and } \\
V: (\alpha_i, a_i) &\mapsto (\alpha_i, pa_i) \in M_{i+1}, \text{ where } (\alpha_i, a_i)  \in M_i.
\end{align*}
\item To describe the map $F: \prod_{i = 0}^{n+1} M_i \rightarrow \prod_{i=0}^{n} M_i$, we note that 
\begin{align*}
F: \alpha &\mapsto \varphi(\alpha) \in M_0, \text{ for } \alpha \in M_0 = \Omega^1_A \\
F: (\alpha_1, a_1) &\mapsto p\alpha_1 + da_1 \in M_0, \text{ for } (\alpha_1, a_1)  \in M_1, \text{ and } \\
F: (\alpha_i, a_i)  &\mapsto (p\alpha_i, a_i) \in M_{i-1}, \text{ for } (\alpha_i, a_i)  \in M_i.
\end{align*}
\end{itemize}
\end{remark}

\begin{corollary}
For every $n \geq 1$, the $p$-torsion submodule of $W_n \Omega^1_A$ is isomorphic to the free $A/p$-module of rank $n-1$ generated by $p^{j-1} dV^{j}(1)$, for $j = 1, \ldots, n-1$.  
\end{corollary}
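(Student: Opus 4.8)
The plan is to read the answer directly off the explicit $A$-module decomposition of Corollary~\ref{de Rham-Witt as A-module}, which identifies $W_n\Omega^1_A$ with $\prod_{j=0}^{n-1} M_j$ for $n \geq 3$ (the cases $n = 1, 2$ being immediate, since $W_1\Omega^1_A = \Omega^1_A$ has no $p$-torsion by Proposition~\ref{Omega bijection}, and $W_2\Omega^1_A[p] \cong M_1[p]$ by Proposition~\ref{hWn}). Since passing to the $p$-torsion submodule commutes with finite products, it suffices to compute $M_j[p]$ for each $j$. The summand $M_0 = \Omega^1_A$ contributes nothing, again by Proposition~\ref{Omega bijection}, so the entire $p$-torsion of $W_n\Omega^1_A$ comes from the summands $M_j$ with $1 \leq j \leq n-1$.

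Fix $j \geq 1$ and recall $M_j = (\Omega^1_A \oplus A)/h_j(A)$ with $h_j(b) = (-db,\, p^j b)$, equipped with the $\varphi^j$-twisted module structure of Lemma~\ref{Mj module structure}. First I would observe that multiplication by the integer $p$ acts componentwise on $\Omega^1_A \oplus A$, since $\varphi^j(p) = p$ and $dp = 0$. Hence a class $(\alpha, a) + h_j(A)$ is killed by $p$ precisely when $(p\alpha, pa) = (-db,\, p^j b)$ for some $b \in A$; because $A$ is $p$-torsion free this forces $a = p^{j-1}b$, and because multiplication by $p$ is bijective on $\Omega^1_A$ it forces $\alpha = -\tfrac{1}{p}\,db$. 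Conversely every pair of the form $(-\tfrac{1}{p}\,db,\, p^{j-1}b)$ is $p$-torsion, with $p$-fold multiple $h_j(b)$. Therefore $M_j[p] = \{\,(-\tfrac{1}{p}\,db,\, p^{j-1}b) + h_j(A) : b \in A\,\}$.

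Next I would pin down the generator. The choice $b = 1$ gives the class $g_j := (0,\, p^{j-1}) + h_j(A)$, which under the isomorphism of Corollary~\ref{de Rham-Witt as A-module} corresponds to $dV^j(s_{\varphi}(p^{j-1})) = p^{j-1}\,dV^j(1)$. Using the module structure of Lemma~\ref{Mj module structure}, one computes $x \cdot g_j = (-\tfrac{1}{p}\,d\varphi^j(x),\; p^{j-1}\varphi^j(x)) + h_j(A)$; since $\varphi$ is an automorphism of $A$, as $x$ ranges over $A$ the right-hand side ranges over all of $M_j[p]$, so $M_j[p] = A\cdot g_j$ is cyclic. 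Finally, $x\cdot g_j = 0$ means $(-\tfrac{1}{p}\,d\varphi^j(x),\, p^{j-1}\varphi^j(x)) \in h_j(A)$, which (once more using $p$-torsion freeness and the bijectivity of $\varphi$) holds exactly when $x \in pA$; and conversely $pA$ annihilates $g_j$. Hence $M_j[p] \cong A/pA$, a free $A/pA$-module on the image of $p^{j-1}dV^j(1)$. Summing over $j = 1, \dots, n-1$ gives $W_n\Omega^1_A[p] \cong \bigoplus_{j=1}^{n-1} (A/pA)\cdot p^{j-1}dV^j(1)$, free of rank $n-1$, as claimed.

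I do not expect a serious obstacle, as all the substance is packaged in results already established; the steps to watch are the two parts of the computation of $M_j[p]$ — verifying that the listed pairs $(-\tfrac{1}{p}\,db,\, p^{j-1}b)$ exhaust the $p$-torsion (so the rank is not larger than $n-1$) and that $\operatorname{Ann}(g_j)$ is exactly $pA$ and not smaller (so each summand is genuinely $A/pA$, not a proper quotient). Both rely on $A$ being $p$-torsion free and on $\varphi$ being an automorphism, i.e., on $A = W(k)$ with $k$ perfect, which is our standing hypothesis.
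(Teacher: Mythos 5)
Your proof is correct and follows essentially the same route as the paper's: both read the answer off the decomposition $W_n\Omega^1_A \cong \prod_{j=0}^{n-1} M_j$ from Corollary~\ref{de Rham-Witt as A-module}, use the bijectivity of multiplication by $p$ on $\Omega^1_A$ (Proposition~\ref{Omega bijection}) to show $M_0$ contributes no torsion and to pin down the torsion of each $M_j$, and identify the generator of $M_j[p]$ with $p^{j-1}dV^j(1)$. Your write-up is somewhat more explicit than the paper's (spelling out the exact form $(-\tfrac{1}{p}\,db, p^{j-1}b)$ of the $p$-torsion classes and computing the annihilator of the generator directly), but the underlying argument is the same.
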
 

\begin{proof}
Using the fact that multiplication by $p$ is a bijection on $\Omega^1_A$, we see that the $p$-torsion module in $M_j = (\Omega^1_A \oplus A)/h_j(A)$ is a free $A/pA$-module of rank~1 generated by $(0,p^{j-1})$.  Then from Corollary~\ref{de Rham-Witt as A-module}, we see that these elements together generate the $p$-torsion submodule of $W_n \Omega^1_A$.  In the factor $M_j \cong (\Omega^1_A \oplus A)/h_j(A)$, a representative $(\alpha, a)$ has element $a$ uniquely determined modulo~$p^jA$.  This shows that we have a relation 
\[
 \sum dV^{j} (p^{j-1} \varphi^{j}(a)) = 0
\]
only if each $a \in pA$.  This shows that the proposed elements are free generators, which completes the proof.  
\end{proof}

\section{The de\thinspace Rham-Witt complex over $A/xA$} \label{A/xA section}

As usual, let $p$ denote an odd prime, let $k$ denote a perfect ring of characteristic~$p$, and let $A = W(k)$.  There are two natural ways to lift elements from $A$ to $W(A)$: the first is our ring homomorphism $s_{\varphi}$, and the second is the multiplicative Teichm\"uller map.  So far in this paper, we have made extensive use of the ring homomorphism $s_{\varphi}$.  In this section and the next, we make more frequent use of the Teichm\"uller map.  The reason is that we will be studying the kernel of the natural ring homomorphism $W(A) \rightarrow W(A/xA)$ for $x \in A$, and $[x]$ is in this kernel whereas $s_{\varphi}(x)$ in general is not.  For example, $[p]$ is in the kernel of $W(\ZZ_p) \rightarrow W(\ZZ_p/p\ZZ_p)$, whereas $s_{\varphi}(p) = p$ is not.

The exactness in Equation~(\ref{hW sequence}) above is very useful for making induction arguments involving the de\thinspace Rham-Witt complex.  For example, our proof of Corollary~\ref{de Rham-Witt as A-module} was dependent on our Witt complex $E_{\cdot}^{\bullet}$ only because $E_{\cdot}^{\bullet}$ was used to prove exactness in Equation~(\ref{hW sequence}). The goal of the remainder of the paper is to prove exactness of the corresponding sequence for the de\thinspace Rham-Witt complex over a certain class of perfectoid rings.  See \cite[Proposition~2.2.1]{Hes06} and \cite[Theorem~3.3.8]{HM03} for related results.  In future joint work with Irakli Patchkoria, we hope to use this exact sequence to provide algebraic proofs of results similar to Hesselholt's $p$-adic Tate module computation in \cite[Proposition~2.3.2]{Hes06}.  In this section we prove general results concerning $W_n \Omega^1_{(A/xA)}$ that are valid for arbitrary $x \in A$.  In Section~\ref{perfectoid section}, we specialize to a certain class of perfectoid rings, in which case we can prove stronger results, including the analogue of the exact sequence in Equation~(\ref{hW sequence}).

Fix an element $x \in A$.  For every integer $n \geq 1$, we have a surjective $A$-module homomorphism $W_n \Omega^1_A \rightarrow W_n \Omega^1_{(A/xA)}$, and Corollary \ref{de Rham-Witt as A-module} gives an explicit description of the domain.  We will give explicit $A$-module generators for the kernel.  Unfortunately, this kernel is not generated as an $A$-module by elements which are homogeneous with respect to the direct sum decomposition from Corollary~\ref{de Rham-Witt as A-module}. 

First we consider the case of level $n = 1$, which will be used repeatedly.

\begin{lemma} \label{n = 1 kernel}
The kernel of the $A$-module homomorphism $\Omega^1_A \rightarrow \Omega^1_{(A/xA)}$ is generated by $x \alpha$ for $\alpha \in \Omega^1_A$ together with the element $dx$.
\end{lemma}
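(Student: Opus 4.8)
The plan is to recognize this as an instance of the second fundamental exact sequence for Kähler differentials, specialized to the principal ideal $xA$. Write $B = A/xA$ and let $N \subseteq \Omega^1_A$ denote the $A$-submodule generated by $\{x\alpha : \alpha \in \Omega^1_A\}$ together with $dx$; equivalently $N = xA\cdot\Omega^1_A + A\,dx$. Since $x$ maps to $0$ under $A \to B$, both $dx$ and every $x\alpha$ map to $0$ in $\Omega^1_{(A/xA)}$, so the canonical surjection $\Omega^1_A \to \Omega^1_{(A/xA)}$ factors through a surjection $\Omega^1_A/N \to \Omega^1_{(A/xA)}$. The content of the lemma is that this map is injective, i.e.\ that $N$ is the full kernel.

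To prove injectivity I would construct the inverse directly from the universal property of $\Omega^1_{(A/xA)}$. Consider the composite $\ZZ$-derivation $A \xrightarrow{\ d\ } \Omega^1_A \to \Omega^1_A/N$; I claim it kills $xA$ and hence descends to a well-defined map $\bar d : B \to \Omega^1_A/N$. Indeed, for $b \in A$ one has $d(xb) = x\,db + b\,dx$, where $x\,db \in xA\cdot\Omega^1_A \subseteq N$ and $b\,dx \in A\,dx \subseteq N$, so $d(xb) \in N$. Since $N$ contains $x\Omega^1_A$, the quotient $\Omega^1_A/N$ is annihilated by $x$ and is therefore naturally a $B$-module, and $\bar d$ is a $\ZZ$-derivation $B \to \Omega^1_A/N$. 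By the universal property, $\bar d$ induces a $B$-linear map $\Omega^1_{(A/xA)} \to \Omega^1_A/N$ with $d\bar a \mapsto da + N$, and checking on the generating set $\{d\bar a\}$ shows this is a two-sided inverse of $\Omega^1_A/N \to \Omega^1_{(A/xA)}$. Hence the two modules are isomorphic and $N = \ker(\Omega^1_A \to \Omega^1_{(A/xA)})$.

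Finally, to match the phrasing of the statement, I would note that $N$ is exactly the submodule described: by construction it is generated by $A\,dx$ and $xA\cdot\Omega^1_A$, where $A\,dx$ is the $A$-submodule generated by the single element $dx$, and $xA\cdot\Omega^1_A$ coincides with the set $\{x\alpha : \alpha \in \Omega^1_A\}$ as an $A$-module since $a(x\alpha) = x(a\alpha)$. This is precisely ``$x\alpha$ for $\alpha \in \Omega^1_A$ together with $dx$.'' I expect no real obstacle here: the only step needing care is the Leibniz computation $d(xb) = x\,db + b\,dx \in N$ that makes $\bar d$ well defined on the quotient. Alternatively, one can simply quote the second fundamental exact sequence $xA/x^2A \to \Omega^1_A \otimes_A B \to \Omega^1_{(A/xA)} \to 0$ (e.g.\ \cite[Theorem~25.2]{Mat89}) and observe that the image of the first map is generated by the class of $dx$ because $xA$ is principal.
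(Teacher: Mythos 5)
Your proposal is correct, and its main line of argument is genuinely more self-contained than the paper's. The paper simply quotes Matsumura's second fundamental exact sequence
\[
xA/x^2A \longrightarrow \Omega^1_A \otimes_A (A/xA) \longrightarrow \Omega^1_{(A/xA)} \longrightarrow 0,
\]
observes the left map sends $xa + x^2A \mapsto d(xa)\otimes 1 = a\,dx \otimes 1$, and reads off that the kernel of $\Omega^1_A \to \Omega^1_{(A/xA)}$ is $x\Omega^1_A + A\,dx$. Your main argument instead reproves this instance from scratch: you define $N = x\Omega^1_A + A\,dx$, check by Leibniz that $d$ kills $xA$ modulo $N$, use the universal property of $\Omega^1_{(A/xA)}$ to build an inverse to the induced surjection $\Omega^1_A/N \to \Omega^1_{(A/xA)}$, and conclude $N$ is the full kernel. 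Both are valid; the paper's citation is quicker, while your direct construction makes transparent exactly which relations force the kernel to be no larger than $N$ — the same Leibniz computation that underlies the second exact sequence. Your closing remark noting that one could instead quote the second fundamental exact sequence, with the image of $xA/x^2A$ generated by the class of $dx$ because $xA$ is principal, is precisely the paper's proof.
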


\begin{proof}
This follows immediately from the usual right exact sequence  of $(A/xA)$-modules 
\begin{equation} \label{2nd sequence}
xA/x^2A \rightarrow \Omega^1_A \otimes_A (A/xA) \rightarrow \Omega^1_{(A/xA)} \rightarrow 0
\end{equation} 
\cite[Theorem~25.2]{Mat89}, where the left-most map is given by $xa  \mapsto d(xa) \otimes 1 $.
\end{proof}

Next we identify the kernel in the degree~zero case, $W_{\cdot} \Omega^0_A \rightarrow W_{\cdot} \Omega^0_{(A/xA)}$.  

\begin{lemma} \label{Witt quotient}
Let $K^0$ denote the kernel of the ring homomorphism $W(A) \rightarrow W(A/xA)$ induced by the projection $A \rightarrow A/xA$. Then $K^0$ consists precisely of elements of the form
\[
\sum_{k = 0}^{\infty} s_{\varphi}(a_k) V^k([x]),
\]
where $a_k \in A$.  
\end{lemma}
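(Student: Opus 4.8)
The plan is to prove the two inclusions separately. The inclusion $\supseteq$ is routine: since $x$ maps to $0$ in $A/xA$, the Teichm\"uller representative $[x]$ maps to $0$ under the reduction map $W(A) \to W(A/xA)$, so each $s_{\varphi}(a_k)V^k([x])$ does too, and every partial sum $\sum_{k=0}^{N} s_{\varphi}(a_k)V^k([x])$ lies in $K^0$. The partial sums form a Cauchy sequence for the $V$-adic topology (the tail past index $N$ lies in $V^{N+1}W(A)$), hence converge in the $V$-adically complete ring $W(A) = \varprojlim_n W_n(A)$, and the limit lies in $K^0$ because $K^0$ is the kernel of a continuous map into the separated ring $W(A/xA) = \varprojlim_n W_n(A/xA)$, hence closed.

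For the inclusion $\subseteq$, the key starting point is that $W(A) \to W(A/xA)$ is just reduction of each Witt component modulo $xA$ (functoriality of $W$), so $K^0$ is exactly the set of Witt vectors all of whose components lie in $xA$. Two immediate consequences: every $\eta \in K^0$ has $0$-th component in $xA$, and $K^0 \cap V W(A) = V(K^0)$. I would then use successive approximation along the $V$-adic filtration: given $\eta \in K^0$, construct $b_m \in A$ and $\eta^{(m)} \in K^0$ with $\eta^{(0)} = \eta$ such that $V(\eta^{(m+1)}) = \eta^{(m)} - s_{\varphi}(b_m)[x]$ for every $m$. This is possible because $s_{\varphi}(b_m)[x]$ has $0$-th Witt component $b_m x$ (the $0$-th Witt component of $s_{\varphi}(a)$ coincides with its $0$-th ghost component $a$, by Proposition~\ref{sF map}), so choosing $b_m$ with $\eta^{(m)}_0 = b_m x$ makes $\eta^{(m)} - s_{\varphi}(b_m)[x]$ lie in $V W(A) \cap K^0 = V(K^0)$, which furnishes $\eta^{(m+1)} \in K^0$.

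It then remains to repackage these relations. Putting $a_m := \varphi^{-m}(b_m)$, which makes sense because $\varphi$ is an automorphism of $A = W(k)$, the Witt vector identity $V^m(F^m(u)v) = u\,V^m(v)$ together with $F \circ s_{\varphi} = s_{\varphi} \circ \varphi$ gives $V^m(s_{\varphi}(b_m)[x]) = V^m\bigl(F^m(s_{\varphi}(a_m))[x]\bigr) = s_{\varphi}(a_m)V^m([x])$. Applying $V^m$ to the relation defining $\eta^{(m+1)}$ and iterating yields $\eta = \sum_{k=0}^{m-1} s_{\varphi}(a_k)V^k([x]) + V^m(\eta^{(m)})$ for all $m \geq 0$; since $\bigcap_m V^m W(A) = 0$, letting $m \to \infty$ gives $\eta = \sum_{k \geq 0} s_{\varphi}(a_k)V^k([x])$, as desired.

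The one point requiring care — the nearest thing to an obstacle — is the $\varphi$-twist produced by $V^m(F^m(u)v) = u\,V^m(v)$: this is precisely why the natural coefficient $b_m$ must be replaced by $a_m = \varphi^{-m}(b_m)$, which uses in an essential way that $\varphi$ is bijective on $A = W(k)$, just as elsewhere in the paper. Everything else — convergence, the componentwise description of $K^0$, injectivity of $V$ — is formal.
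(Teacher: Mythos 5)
Your proof is correct and uses the same approach as the paper: successive approximation along the $V$-adic filtration, killing one Witt component at a time, with the $\varphi^{-k}$-twist introduced to absorb the Frobenius arising from $V^k(F^k(u)v)=uV^k(v)$. You spell out the componentwise picture of $W(A)\to W(A/xA)$ and the convergence a bit more explicitly than the paper, but the essential steps are identical.
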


\begin{proof}
It's clear that these elements are in the kernel.  We now prove that an arbitrary element in the kernel can be written in this way.  Working one level at a time, it suffices to show that if $V^k(y_k)$ is in the kernel, then we can find $a_k \in A$ and $y_{k+1} \in W(A)$ such that 
\[
V^k(y_k) = s_{\varphi}(a_k) V^k([x]) + V^{k+1}(y_{k+1}).
\]
(Note that this also implies that $V^{k+1}(y_{k+1})$ is in the kernel.)  Because 
\[
s_{\varphi}(a_k) V^k([x]) = V^k(F^k(s_{\varphi}(a_k))[x]) = V^k(s_{\varphi}(\varphi^k(a_k)) [x])
\] 
and $\varphi: A \rightarrow A$ is surjective, we can find such elements $a_k$ and $y_{k+1}$.  
\end{proof}

We now do the same thing for the degree~one case.  In this case, the ring $W(A) \cong \varprojlim W_n(A)$ from Lemma~\ref{Witt quotient} gets replaced by the $W(A)$-module, $\varprojlim W_n \Omega^1_A$.   Corollary~\ref{de Rham-Witt as A-module} leads to an explicit description of this inverse limit as an $A$-module.  

More concretely, we give generators for the kernels of the $A$-module homomorphisms $W_n \Omega^1_A \rightarrow W_n \Omega^1_{(A/xA)}$, and we choose these generators so they are compatible under restriction maps for varying $n \geq 1$.  We view these generators as elements in $\varprojlim W_n \Omega^1_A$.  The main work involves studying, for particular choices of $A$ and $x$, the $A$-submodule of $\varprojlim W_n \Omega^1_A$ generated by these elements in the kernel.  Because these elements involve the Teichm\"uller lift $[x]$, they do not have a simple description in terms of our decomposition of $W_n \Omega^1_A$ given in Corollary~\ref{de Rham-Witt as A-module}.  

\begin{definition} \label{K definition} 
Let $M_0 = \Omega^1_A$ and for each integer $j \geq 1$, let $M_j$ be the cokernel of the $A$-module homomorphism in Lemma~\ref{Mj module structure}.  
Let $M$ denote the $A$-module 
\[
M = \prod_{j = 0}^{\infty} M_j.
\] 
Let $K^1 \subseteq M$ denote the $A$-submodule consisting of all elements of the form
\[
\sum_{k = 0}^{\infty} \left( V^k([x] s_{\varphi}(\alpha_k) ) + s_{\varphi}(a_k) dV^k([x]) \right),
\]
where $\alpha_k \in \Omega^1_A$ and where $a_k \in A$; here, to make sense of such an expression as an element in $M$, we use the structures described in Remark~\ref{M Witt complex}. 
\end{definition} 

\begin{remark}
\begin{enumerate}
\item By Corollary~\ref{de Rham-Witt as A-module}, $M$ is isomorphic as an $A$-module to $\varprojlim W_n \Omega^1_A$.
\item The $A$-module $K^1$ depends on our choice of element $x$, but that element is fixed throughout this section, so we write simply $K^1$ and not more suggestive notation such as $K^1_x$.
\end{enumerate}  
\end{remark}

We will use $K^1$ from Definition~\ref{K definition} to describe the kernel of $W_n \Omega^1_A \rightarrow W_n \Omega^1_{(A/xA)}$; namely, we will show that this kernel is the image of $K^1$ under the restriction map $R_n: M \rightarrow W_n \Omega^1_A$.

\begin{lemma} \label{kernel ideal}
For $n \geq 1$, write $R_n$ for the restriction map $W(A) \rightarrow W_n(A)$ and also for the restriction map $M \rightarrow W_n \Omega^1_A$.  The $A$-submodule of $W_n \Omega^{\bullet}_A$ generated by $R_n(K^0)$ and $R_n(K^1)$ and all higher degree terms ($W_n \Omega^d_A$ for $d \geq 2$) is an ideal in the ring $W_n \Omega^{\bullet}_A$.  
\end{lemma}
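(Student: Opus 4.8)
The plan is to exploit the graded structure. Since the $A$-module structure on $W_n\Omega^{\bullet}_A$ coming from $s_\varphi$ is homogeneous and $K^0, K^1$ are themselves $A$-submodules, the submodule $J$ in the statement decomposes as $J = J^0 \oplus J^1 \oplus \bigoplus_{d \geq 2} W_n\Omega^d_A$ with $J^0 = R_n(K^0)$ and $J^1 = R_n(K^1)$. To prove $J$ is an ideal it suffices to check $\omega\cdot g \in J$ when $g$ runs over a set of $A$-module generators of $J$ and $\omega$ runs over a set of generators of $W_n\Omega^{\bullet}_A$ as an $A$-algebra (via $s_\varphi$). Using Lemma~\ref{V-combination lemma} for the degree-zero part, and the surjectivity of $\Omega^1_{W_n(A)}\to W_n\Omega^1_A$ together with the Leibniz expansion of $d$ on elements $\sum s_\varphi(a_j)V^j(1)$ for the degree-one part, one such generating set is $\{V^j(1):1\le j\le n-1\}$ in degree zero, together with $\{s_\varphi(\alpha):\alpha\in\Omega^1_A\}$ and $\{dV^j(1):1\le j\le n-1\}$ in degree one.

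\textbf{Reduction by a degree count.} If the product $\omega\cdot g$ has total degree $\geq 2$ it lies in $\bigoplus_{d\geq2}W_n\Omega^d_A\subseteq J$ automatically; this covers every product with a factor from $W_n\Omega^d_A$, $d\geq2$, and every product of two degree-one factors (in particular $s_\varphi(\alpha)\cdot R_n(K^1)$ and $dV^j(1)\cdot R_n(K^1)$). In degree zero the only check is $V^j(1)\cdot R_n(K^0)\subseteq R_n(K^0)$, which holds because $R_n(K^0)$ is the image of the ideal $K^0=\ker(W(A)\to W(A/xA))$ under the surjection $R_n$, hence an ideal in $W_n\Omega^0_A$. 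What remains are three checks, all landing in degree one: (A) $V^j(1)\cdot R_n(K^1)\subseteq R_n(K^1)$; (B) $s_\varphi(\alpha)\cdot R_n(K^0)\subseteq R_n(K^1)$ for $\alpha\in\Omega^1_A$; and (C) $dV^j(1)\cdot R_n(K^0)\subseteq R_n(K^1)$. I would then observe that (C) follows from (A) and (B): by $s_\varphi(A)$-linearity it suffices to treat $dV^j(1)\cdot V^k([x])$, and the Leibniz rule gives $dV^j(1)\cdot V^k([x]) = d\big(V^j(1)V^k([x])\big) - V^j(1)\cdot dV^k([x])$; the second term is in $R_n(K^1)$ by (A) since $dV^k([x])=s_\varphi(1)\,dV^k([x])\in K^1$, while $V^j(1)V^k([x])\in R_n(K^0)$, and the Leibniz expansion $d\big(\sum_k s_\varphi(a_k)V^k([x])\big)=\sum_k s_\varphi(a_k)\,dV^k([x])+\sum_k s_\varphi(da_k)\,V^k([x])$ writes $d$ of any element of $R_n(K^0)$ as a sum of a term in $R_n(K^1)$ and a term of the shape appearing in (B).

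\textbf{The core of the argument.} Thus the proof comes down to (A) and (B), each of which asks that applying one basic operation keeps us inside the explicit ``normal forms'' defining $K^0$ and $K^1$. Concretely one writes $[x]=\sum_i s_\varphi(x_i)V^i(1)$ with the coefficients $x_i$ of Lemma~\ref{Teich lemma}, expands the relevant elements $V^k([x]s_\varphi(\alpha_k))$, $s_\varphi(a_k)\,dV^k([x])$, and $s_\varphi(\alpha)V^k([x])$, and simplifies using the Witt-complex relations $V(F(\omega)\eta)=\omega V(\eta)$, $FV=p$, $FdV=d$, $Vd=pdV$, $dF=pFd$, the Leibniz rule, and the $A$-module formulas of Lemma~\ref{Mj module structure}; equivalently the same computation can be run inside the explicit model $M=\prod_j M_j$ of Corollary~\ref{de Rham-Witt as A-module} and Remark~\ref{M Witt complex} and then pushed forward by $R_n$. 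I expect this bookkeeping to be the main obstacle. The difficulties are exactly those already met in verifying $Fd[a]=[a]^{p-1}d[a]$: the Teichm\"uller coefficients $x_i$ of $[x]$ are not simply related to $s_\varphi$; powers $[x]^{p^m}=F^m([x])$ that surface after applying $F$ or $V^j(1)$ must be recognized as divisible by $[x]$ in order to stay of $K^0$- or $K^1$-type; and since $F$ does not commute with $d$, terms $F^m(ds_\varphi(c))$ must be rewritten as $p^{-m}ds_\varphi(\varphi^m(c))$, which is legitimate precisely because multiplication by $p$ is a bijection on $\Omega^1_A$ by Proposition~\ref{Omega bijection}.

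\textbf{Sanity check.} As a consistency check one can note that the case $n=1$ is immediate from Lemma~\ref{n = 1 kernel}: there $J$ is exactly $\ker(\Omega^0_A\to\Omega^0_{A/xA})\oplus\ker(\Omega^1_A\to\Omega^1_{A/xA})\oplus\bigoplus_{d\geq2}\Omega^d_A$, which is visibly an ideal, and this identification also makes transparent why $R_n(K^1)$ is the natural degree-one replacement for the kernel of restriction modulo $x$.
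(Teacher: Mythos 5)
Your reduction is sound and follows essentially the same plan as the paper's proof: reduce by degree to the checks $R_n(K^0)\cdot W_n\Omega^0_A\subseteq R_n(K^0)$ (clear, since $K^0$ is an ideal), $R_n(K^0)\cdot W_n\Omega^1_A\subseteq R_n(K^1)$, and $R_n(K^1)\cdot W_n\Omega^0_A\subseteq R_n(K^1)$, with products of total degree $\ge 2$ free. Your derivation of (C) from (A) and (B) via $dV^j(1)\cdot V^k([x])=d\big(V^j(1)V^k([x])\big)-V^j(1)\,dV^k([x])$ is a mild streamlining; the paper instead absorbs this case into its direct expansion of $[x]\,m_0$.

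Where the proposal stops short is precisely where the work lies, and the route you sketch for it is heavier than necessary. You propose to expand $[x]$ through its Teichm\"uller coefficients $x_i$ from Lemma~\ref{Teich lemma} and then grind. The paper never does this: it keeps $[x]$ intact and uses the multiplicativity of the Teichm\"uller lift to pull factors of $[x]$ back out. Concretely, the two identities
\[
[x]\,V^i(\eta)=V^i\big(F^i([x])\,\eta\big)=V^i\big([x]^{p^i}\eta\big),\qquad F^i d[x]=[x]^{p^i-1}d[x],
\]
together with $[x]^{p^m}=[x]\cdot[x]^{p^m-1}$, are what let every $F$-twisted term be rewritten as $[x]\cdot(\text{something})$ or $[x]\,d[x]\cdot(\text{something})$ and hence recognized as staying of $K^1$-type; the $x_i$ from Lemma~\ref{Teich lemma} never enter. (The second identity is exactly the Witt-complex axiom $Fd[r]=[r]^{p-1}d[r]$ iterated, which is conspicuously absent from the list of relations you invoke even though you correctly flag that ``$F$ does not commute with $d$'' is a source of friction.)

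One further point worth being honest about: the rewriting $V^i(1)\cdot V^k([x]\,s_\varphi(\alpha))=V^i\big([x]^{p^{i-k}}s_\varphi(\cdot)\big)$ for $i>k$ produces terms of the same shape one started from, with the power of $V$ strictly increased. To close the argument one must either observe that at finite level $W_n\Omega^{\bullet}_A$ this process terminates (everything under a $V^j$ with $j\ge n$ dies), giving a downward induction, or more simply prove directly the stronger statement that $V^j\big([x]\,\omega\big)\in R_n(K^1)$ for every $j\ge0$ and every $\omega\in W_n\Omega^1_A$, which is in effect what the paper's first displayed computation does by expanding $\omega$ as $s_\varphi(\alpha_0)+\sum_{i\ge1}\big(V^i(s_\varphi(\alpha_i))+dV^i(s_\varphi(a_i))\big)$. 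Spelling out one of these closings is needed to turn your ``bookkeeping'' paragraph into a proof.

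Your $n=1$ sanity check via Lemma~\ref{n = 1 kernel} is a good observation and agrees with the paper.
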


\begin{proof}
We have to show that the $A$-module generated by these elements is closed under multiplication by elements in $W_n \Omega^{\bullet}_A$.  
Consider an element $V^k([x]) m$, where $m \in W_n \Omega^1_A$.  This can be rewritten as $V^k([x] m_0)$, where $m_0 = F^k(m)$.  The element $m_0$ can be written (not uniquely) as 
\begin{align*}
m_0 &= s_{\varphi}(\alpha_0) + \sum_{i = 1}^{n-k-1} \left( V^i(s_{\varphi}(\alpha_i)) + dV^i(s_{\varphi}(a_i) )\right), \\
\intertext{and so}
[x]m_0 &= [x] s_{\varphi}(\alpha_0) + \sum_{i = 1}^{n-k-1} \left( [x]V^i( s_{\varphi}(\alpha_i)) + [x]dV^i(s_{\varphi}(a_i) )\right) \\
 &= [x] s_{\varphi}(\alpha_0) + \sum_{i = 1}^{n-k-1} \left( V^i( [x]^{p^i} s_{\varphi}(\alpha_i)) + dV^i([x]^{p^i} s_{\varphi}(a_i) ) - V^i(s_{\varphi}(a_i) [x]^{p^i - 1} d[x]\right).
\intertext{(Here we used the formula $Fd[x] = [x]^{p-1} d[x]$.)  And so} 
V^k([x]m_0) &\in R_n(K^1).
\end{align*}

Now we consider degree~1 terms in our $A$-module.  We first consider a term $V^k([x] s_{\varphi}(\alpha) )$ and then below we consider $dV^k([x])$.  We can write an arbitrary element $y \in W_n(A)$ as $\sum_{i = 0}^{n-1} s_{\varphi}(y_i) V^i(1)$, thus it suffices to show that  
\begin{align*}
V^k([x] s_{\varphi}(\alpha) ) V^i(1) &\in R_n(K^1).\\
\intertext{If $i \leq k$, we have }
V^k([x] s_{\varphi}(\alpha) ) V^i(1) &= V^k([x] s_{\varphi}(p^i  \alpha) ) \in R_n(K^1). \\
\intertext{If $i > k$, we have }
V^k([x] s_{\varphi}(\alpha) ) V^i(1) &= V^i(F^{i-k}\left( [x] s_{\varphi}(\alpha) \right) ) \\
&= V^i([x]^{p^{i-k}} s_{\varphi}(\frac{1}{p^{i-k}} \varphi^{i-k}(\alpha) ) ) \in R_n(K^1).
\end{align*}

Similarly, we find
\begin{align*}
dV^k([x]) V^i(1) &= V^i ( dV^{k-i}([x] ) = p^i dV^k([x]) \text{ for } i \leq k\\
\intertext{and }
dV^k([x]) V^i(1) &=V^i ( F^{i-k}d[x] ) = V^i ([x] m ) \text{ for } i > k \text{ and } m \in W_n \Omega^1_A.
\end{align*}
It was shown in the degree~zero portion of our proof that this latter element is in $R_n(K^1)$.  
\end{proof}

\begin{proposition} \label{Witt complex over B}
Define $G_{\cdot}^{\bullet}$ by
\begin{align*}
G_n^0 &:= W_n(A)/R_n(K^0)\\
G_n^1 &:= W_n \Omega^1_A / R_n(K^1)\\
G_n^d &:= 0 \text{ for } d \geq 2.
\end{align*}
Equipped with the structure maps inherited from $W_{\cdot}\Omega^{\bullet}_A$, this is a Witt complex over $A/xA$.
\end{proposition}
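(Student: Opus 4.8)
The idea is to realize $G_{\cdot}^{\bullet}$ as a quotient of $W_{\cdot}\Omega^{\bullet}_A$ by a pro-differential-graded ideal stable under every structure map, and then to verify the two axioms of a Witt complex that survive the quotient passage nontrivially. For each $n$ set
\[
J_n := R_n(K^0) \oplus R_n(K^1) \oplus \bigoplus_{d \geq 2} W_n\Omega^d_A \subseteq W_n\Omega^{\bullet}_A,
\]
which is an ideal of the ring $W_n\Omega^{\bullet}_A$ by Lemma~\ref{kernel ideal}, and note that by definition $G_n^{\bullet} = W_n\Omega^{\bullet}_A/J_n$. The plan is: (a) show $J_{\cdot}$ is stable under $R$, $F$, $V$ and under $d$, so that $G_{\cdot}^{\bullet}$ inherits a pro-differential-graded-ring structure with operators $R$, $F$, $V$; (b) identify $G_n^0$ with $W_n(A/xA)$ and take $\lambda$ to be that isomorphism; (c) check $Fd\lambda([r]) = \lambda([r]^{p-1})\,d\lambda([r])$ for $r \in A/xA$. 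Every other relation in the definition of a Witt complex ($d \circ d = 0$, the Leibniz rule, additivity of $d,F,V$, multiplicativity of $F$, the projection formula $V(F(\omega)\eta)=\omega V(\eta)$, $FdV = d$, $FV = p$, and $\lambda F = F\lambda$, $\lambda V = V\lambda$) holds already in $W_{\cdot}\Omega^{\bullet}_A$, or will follow from naturality in (b), and so descends automatically.

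\textbf{Stability of $J_{\cdot}$.} This is a computation on the generators of $K^0$ and $K^1$ using Proposition~\ref{Witt complex relations}, the relation $Fd[x] = [x]^{p-1}d[x]$, and Lemma~\ref{sF degree one}; stability under $R$ is immediate since all the maps in question commute with restriction. For $d$: applied to a generator $s_{\varphi}(a)V^k([x])$ of $K^0$, the Leibniz rule gives $s_{\varphi}(a)\,dV^k([x]) + V^k([x])\,ds_{\varphi}(a)$; the first term is a $K^1$-generator, and since $ds_{\varphi}(a) = s_{\varphi}(da) \in \Omega^1_A$, the projection formula $V^k(\eta)\omega = V^k(\eta\,F^k\omega)$ together with iterating Lemma~\ref{sF degree one} rewrites the second term as $V^k\!\big([x]\,s_{\varphi}(p^{-k}\varphi^k(da))\big)$, again a $K^1$-generator (the fraction makes sense by Proposition~\ref{Omega bijection}); that $d$ maps $R_n(K^1)$ into $W_n\Omega^2_A \subseteq J_n$ is trivial. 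For $F$: on $V^k([x]s_{\varphi}(\alpha))$ with $k \geq 1$ one gets $V^{k-1}([x]s_{\varphi}(p\alpha))$, while for $k=0$ one gets $[x]^p s_{\varphi}(p^{-1}\varphi(\alpha)) = [x]^{p-1}\cdot\big([x]s_{\varphi}(p^{-1}\varphi(\alpha))\big)$, which lies in $R_n(K^1)$ because $R_n(K^1)$ is the degree-one component of the ideal $J_n$ and $[x]^{p-1} \in W_n(A)$; the generators $s_{\varphi}(a)\,dV^k([x])$ under $F$ are handled the same way, using $FdV^k = dV^{k-1}$ for $k\geq 1$ and $Fd[x]=[x]^{p-1}d[x]$ for $k=0$. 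For $V$: using $s_{\varphi}(a) = F(s_{\varphi}(\varphi^{-1}(a)))$, the projection formula, and $Vd = pdV$, one finds $V\big(s_{\varphi}(a)\,dV^k([x])\big) = s_{\varphi}(p\varphi^{-1}(a))\,dV^{k+1}([x])$, and $V\big(V^k([x]s_{\varphi}(\alpha))\big) = V^{k+1}([x]s_{\varphi}(\alpha))$, both in $K^1$. Finally $K^0$, being the kernel of the ring homomorphism $W(A)\to W(A/xA)$ (Lemma~\ref{Witt quotient}), is preserved by $R$, $F$, $V$ by naturality of Witt vectors.

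\textbf{Degree zero and the Teichm\"uller relation.} From Lemma~\ref{Witt quotient}, surjectivity of $W(A) \twoheadrightarrow W_n(A)$, and $\ker\!\big(W(A/xA)\to W_n(A/xA)\big) = V^n W(A/xA)$, one checks that $R_n(K^0)$ is exactly the kernel of the projection $W_n(A) \to W_n(A/xA)$, so $\lambda_n \colon W_n(A/xA) \to G_n^0$ is an isomorphism; these isomorphisms are compatible with $R$, $F$, $V$ by naturality, which yields $\lambda F = F\lambda$ and $\lambda V = V\lambda$. For the last axiom, fix $r \in A/xA$, lift it to $a \in A$, and apply the quotient map $W_{\cdot}\Omega^{\bullet}_A \to G_{\cdot}^{\bullet}$ to the identity $Fd[a] = [a]^{p-1}d[a]$ valid in $W_{\cdot}\Omega^{\bullet}_A$; since the image of $[a]$ in $G_n^0 = W_n(A/xA)$ is the Teichm\"uller lift $[r] = \lambda_n([r])$, this gives $Fd\lambda([r]) = \lambda([r]^{p-1})\,d\lambda([r])$, completing the proof. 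The only inputs that are not purely formal are Lemma~\ref{kernel ideal} (needed so that $R_n(K^1)$ absorbs $W_n(A)$, which is what closes up $F$ on the $k=0$ generators of $K^1$) and Lemma~\ref{sF degree one} (which supplies the cross term in the proof that $d$ preserves $R_n(K^0)$); I expect the bookkeeping for those $k=0$ generators, where the Teichm\"uller relation and the ideal property of $J_n$ must be used together, to be the fiddliest point, but it is not a conceptual obstacle.
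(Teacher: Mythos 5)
Your proposal is correct and follows essentially the same strategy as the paper: identify $J_n$ as an ideal via Lemma~\ref{kernel ideal}, verify stability under $d$, $R$, $F$, $V$ using Lemma~\ref{Witt quotient}, Lemma~\ref{sF degree one}, the relation $Fd[x] = [x]^{p-1}d[x]$, and the projection formula (with the $k = 0$ generators needing the ideal property), identify $G_n^0 \cong W_n(A/xA)$, and let the remaining Witt-complex axioms descend from $W_{\cdot}\Omega^{\bullet}_A$. You spell out the Teichm\"uller relation and the identification $R_n(K^0) = \ker(W_n(A) \to W_n(A/xA))$ slightly more explicitly than the paper, but the argument and all key ingredients coincide.
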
  

\begin{proof}
The main thing to verify is that all of the necessary maps are well-defined.  All the various relations required of a Witt complex will then hold automatically since they hold in $W_n \Omega^{\bullet}_A$.  

The fact that $G_n^{\bullet}$ is a ring follows from Lemma~\ref{kernel ideal}.  Define $\lambda: W_n(A/xA) \rightarrow G_n^0$ to be the unique map such that the composition $W_n(A) \rightarrow W_n(A/xA) \rightarrow G_n^0$ is the projection map; this is possible by Lemma~\ref{Witt quotient}.  To define the differential $d: G_n^0 \rightarrow G_n^1$, we check that $d(s_{\varphi}(a) V^k([x])) \in R_n(K^1)$, which follows because
\[
d(s_{\varphi}(a) V^k([x])) = s_{\varphi}(a) dV^k([x]) + V^k([x] F^kds_{\varphi}(a)) = s_{\varphi}(a) dV^k([x]) + V^k([x] s_{\varphi}(\frac{1}{p^k}d\varphi^k(a))), 
\]
where the last equality holds by Lemma~\ref{sF degree one}.  Because $R \circ R_n = R_{n-1}$, it is clear that the restriction map $R$ is well-defined.  The fact that $V$ is well-defined follows from $VdV^k = pdV^{k+1}$ and the fact that $K^1$ is closed under multiplication by arbitrary elements in $W(A)$.  

To check that $F$ is well-defined on $G_n^1$, we need to show that $F(R_n(K^1)) \subseteq R_{n-1}(K^1)$, which means that we need to evaluate $F$ on elements 
\[
\sum_{k = 0}^{\infty} \left( V^k([x] s_{\varphi}(\alpha_k) ) + s_{\varphi}(a_k) dV^k([x]) \right).
\]
The result is immediate from the de\thinspace Rham-Witt relations, but we need to be careful to treat the $k = 0$ case separately from the $k > 0$ case.  We have
\begin{align*}
F\left( [x] s_{\varphi}(\alpha_0) \right) &= [x]^p s_{\varphi}( \frac{1}{p} \varphi(\alpha_0)) \text{ and }\\
F\left( s_{\varphi}(a_0) d[x] \right) &= s_{\varphi}(\varphi(a_0)) [x]^{p-1} d[x],
\end{align*}
and these elements are in $R_{n-1}(K^1)$ by Lemma~\ref{kernel ideal}.  For $k \geq 1$, we have
\[
F\left( V^k([x] s_{\varphi}(\alpha_k) \right) \text{ and } F\left( s_{\varphi}(a_k) dV^k([x]) \right) \in R_{n-1}(K^1),
\]
because $FV = p$ and $FdV = d$.
\end{proof}

\begin{proposition}
We have an isomorphism of $A$-modules
\[
W_n \Omega^1_A / R_n(K^1)\cong W_n \Omega^1_{(A/xA)}.
\]
\end{proposition}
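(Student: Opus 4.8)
The plan is to realize the claimed isomorphism as $\bar\pi^{-1}$, where $\bar\pi$ is the map induced on $G_n^1 := W_n\Omega^1_A/R_n(K^1)$ by the canonical (functorial) morphism of Witt complexes $\pi\colon W_\cdot\Omega^\bullet_A \to W_\cdot\Omega^\bullet_{(A/xA)}$ attached to the projection $A \to A/xA$. This $\pi$ is surjective in every level and degree: in the commutative square relating the map $\Omega^1_{W_n(A)} \to \Omega^1_{W_n(A/xA)}$ (surjective, since $W_n$ and $\Omega^1$ preserve surjections) to the canonical surjections $\Omega^1_{W_n(-)} \twoheadrightarrow W_n\Omega^1_{(-)}$, the composite $\Omega^1_{W_n(A)} \to W_n\Omega^1_{(A/xA)}$ is surjective, so the bottom map $W_n\Omega^1_A \to W_n\Omega^1_{(A/xA)}$ is surjective. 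Since $[x] \in W_n(A)$ maps to $[0] = 0$ in $W_n(A/xA)$ and $\pi$ commutes with $V$, with $d$, and with multiplication, $\pi$ annihilates both kinds of generators $V^k([x]s_\varphi(\alpha))$ and $s_\varphi(a)\,dV^k([x])$ of $R_n(K^1)$; hence $\pi$ factors through a surjection $\bar\pi\colon G_n^1 \twoheadrightarrow W_n\Omega^1_{(A/xA)}$. It remains to show $\bar\pi$ is injective.

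For that I would use Proposition~\ref{Witt complex over B}, which says $G_\cdot^\bullet$ is a Witt complex over $A/xA$, together with the initiality of $W_\cdot\Omega^\bullet_{(A/xA)}$, to obtain the unique morphism $\theta\colon W_\cdot\Omega^\bullet_{(A/xA)} \to G_\cdot^\bullet$ of Witt complexes over $A/xA$ (there is no clash in degrees $\geq 2$, where $\theta$ is simply the zero map into $G_n^d = 0$). Because $\lambda_G$ is literally the identity of $W_n(A/xA) = G_n^0$, on a generator $\bar\omega_0\,d\bar\omega_1$ of $W_n\Omega^1_{(A/xA)}$ (with $\bar\omega_i \in W_n(A/xA)$) one gets $\theta(\bar\omega_0\,d\bar\omega_1) = \bar\omega_0\,d_G(\bar\omega_1)$, and then $\bar\pi(\bar\omega_0\,d_G(\bar\omega_1)) = \bar\omega_0\,d\bar\omega_1$, since $\bar\pi$ is the identity in degree zero and $\pi$ commutes with $d$ from degree zero to degree one; by additivity $\bar\pi\circ\theta = \mathrm{id}$ on $W_n\Omega^1_{(A/xA)}$. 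Although $\bar\pi$ is \emph{not} itself a morphism of Witt complexes (it fails to commute with $d$ into degree two), this computation uses $\bar\pi$ only in degrees zero and one, so that is no obstruction.

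Finally I would check that $\theta$ is surjective in degree one; then $\bar\pi$, being a surjection admitting a surjective section $\theta$, is bijective with inverse $\theta$, which gives the result. Here I would use that $W_n\Omega^1_A$ is generated as a $W_n(A)$-module by $\{dw : w \in W_n(A)\}$ (being a quotient of $\Omega^1_{W_n(A)}$), that $R_n(K^0)\cdot W_n\Omega^1_A \subseteq R_n(K^1)$ and $d(R_n(K^0)) \subseteq R_n(K^1)$ (both contained in Lemma~\ref{kernel ideal} and the computation in the proof of Proposition~\ref{Witt complex over B}), and that $R_n(K^0) = \ker(W_n(A) \to W_n(A/xA))$ (Lemma~\ref{Witt quotient}): these show that $G_n^1$ is generated as a $W_n(A/xA)$-module by the well-defined elements $d_G(\bar w)$, $\bar w \in W_n(A/xA)$, each of which equals $\theta(d\bar w)$. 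The one step with genuine content is the identity $\bar\pi\circ\theta = \mathrm{id}$: it is where the hypothesis that $\lambda_G$ is the identity map becomes essential, and it is what prevents $\theta$ from having any degree-one slack; everything else is a formal consequence of the work already done in Lemma~\ref{kernel ideal} and Proposition~\ref{Witt complex over B}.
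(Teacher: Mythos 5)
Your proof is correct and follows the same essential strategy as the paper: both exploit that $G_\cdot^\bullet$ is a Witt complex over $A/xA$ (Proposition~\ref{Witt complex over B}) to produce, via initiality of $W_\cdot\Omega^\bullet_{(A/xA)}$, a map $\theta = g$ in the reverse direction, and then use the surjections $\Omega^1_{W_n(A/xA)} \twoheadrightarrow W_n\Omega^1_{(A/xA)}$ and $\Omega^1_{W_n(A/xA)} \twoheadrightarrow G_n^1$ to pin both compositions to the identity. The paper packages this as a single diagram chase through a commuting pair of triangles, whereas you unpack it into two steps ($\bar\pi\circ\theta=\mathrm{id}$ on generators, plus surjectivity of $\theta$); these are logically equivalent and of comparable effort.
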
 

\begin{proof}
Viewing $W_n\Omega^1_{(A/xA)}$ as a Witt complex over $A$, we have a map of $W_n(A)$-modules $W_n \Omega^1_A \rightarrow W_n \Omega^1_{(A/xA)}$ which induces a map $f: (W_n \Omega^1_A)/R_n(K^1) \rightarrow W_n \Omega^1_{(A/xA)}$.  Similarly, $G_{\cdot}^{\bullet}$ is a Witt complex over $A/xA$ by Proposition~\ref{Witt complex over B}, so we have a map of $W_n(A/xA)$-modules $g: W_n \Omega^1_{(A/xA)} \rightarrow (W_n \Omega^1_A)/R_n(K^1)$.  We claim that the compositions $gf$ and $fg$ are both the identity map.   

Because the maps $f$ and $g$ arise from maps of Witt complexes, the two triangles in the following diagram commute.  
\[
\xymatrix{
W_n \Omega^1_A / R_n(K^1) \ar@/^1pc/[rr]^f  && W_n \Omega^1_{(A/xA)} \ar@/^1pc/[ll]^g \\
& \Omega^1_{W_n(A/xA)} \ar@{->>}[ul] \ar@{->>}[ur]
}
\]
Then, because the diagonal maps are both surjective, a diagram chase shows that $fg$ and $gf$ are both the identity map.   
\end{proof}

We conclude this section with a technical result about $K^1$ that will be used in the following section.  We include it in this section because it is valid in a more general context than what we consider in Section~\ref{perfectoid section}.

\begin{notation} \label{Pn notation}
For every integer $n \geq 1$, let $P_n$ denote the property
\begin{itemize}
\item $P_n:$  If $z \in K^1$ and $R_n(z) = 0$, then we can write
\[
z = \sum_{k = n}^{\infty} \left( V^k([x] s_{\varphi}(\alpha_k) ) + s_{\varphi}(a_k) dV^k([x]) \right).
\]
\end{itemize}
\end{notation}

\begin{proposition} \label{Pn proposition}
Assume that $x \not\in pA$.  If Property $P_1$ holds, then for every integer $n \geq 1$, the property $P_{n}$ also holds.
\end{proposition}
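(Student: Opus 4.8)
The plan is to argue by induction on $n$: the base case $n=1$ is exactly the hypothesis $P_1$, so I fix $n\ge 1$, assume $P_m$ holds for all $1\le m\le n$, and deduce $P_{n+1}$.

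So let $z\in K^1$ with $R_{n+1}(z)=0$. Since $R_n=R\circ R_{n+1}$ we also have $R_n(z)=0$, and $P_n$ produces a representation $z=\sum_{k\ge n}\bigl(V^k([x]s_\varphi(\alpha_k))+s_\varphi(a_k)\,dV^k([x])\bigr)$. Because $V^k$ and $dV^k([x])$ vanish at level $n+1$ once $k\ge n+1$, the tail $z':=\sum_{k\ge n+1}(\cdots)$ lies in $\ker R_{n+1}$, so the single remaining summand $w:=V^n([x]s_\varphi(\alpha_n))+s_\varphi(a_n)\,dV^n([x])$ also satisfies $R_{n+1}(w)=0$. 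It therefore suffices to put $w$ into the form $\sum_{k\ge n+1}(\cdots)$.

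First I would rewrite $w$ in a symmetric shape. Using $b\,V^n(c)=V^n(F^n(b)c)$, the identity $F^j\circ s_\varphi=s_\varphi\circ\varphi^j\circ p^{-j}$ on $\Omega^1_A$ (Lemma~\ref{sF degree one} iterated), and the Leibniz rule, one checks $w=V^n([x]s_\varphi(\gamma))+dV^n([x]s_\varphi(e))$ with $\gamma:=\alpha_n-p^{-n}\varphi^n(da_n)\in\Omega^1_A$ and $e:=\varphi^n(a_n)\in A$. Expanding $[x]=\sum_{j\ge 0}s_\varphi(x_j)V^j(1)$ by Lemma~\ref{Teich lemma}, only the $j=0$ parts survive $R_{n+1}$, and a computation with the decomposition of Corollary~\ref{de Rham-Witt as A-module} shows $R_{n+1}(w)=(x\gamma,\,xe)+h_n(A)\in M_n$; thus $R_{n+1}(w)=0$ is equivalent to the existence of $c\in A$ with $xe=p^nc$ and $x\gamma=-dc$. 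Given these relations the ``bottom'' parts of $w$ cancel: from $V^nd=p^n\,dV^n$ one gets $V^n(s_\varphi(x\gamma))=-V^n(d\,s_\varphi(c))=-p^n\,dV^n(s_\varphi(c))$, while $dV^n(s_\varphi(xe))=dV^n(p^ns_\varphi(c))=p^n\,dV^n(s_\varphi(c))$, and subtracting gives $w=V^n\bigl(([x]-s_\varphi(x))s_\varphi(\gamma)\bigr)+dV^n\bigl(([x]-s_\varphi(x))s_\varphi(e)\bigr)$. Since $[x]-s_\varphi(x)=\sum_{j\ge 1}s_\varphi(x_j)V^j(1)$ equals $V(\xi)$ for a suitable $\xi\in W(A)$, this becomes $w=V^{n+1}\bigl(\xi\,s_\varphi(p^{-1}\varphi(\gamma))\bigr)+dV^{n+1}\bigl(\xi\,s_\varphi(\varphi(e))\bigr)$, confirming in particular that $w\in\ker R_{n+1}$.

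The final step — converting this last expression into $\sum_{k\ge n+1}\bigl(V^k([x]s_\varphi(\beta_k))+s_\varphi(b_k)\,dV^k([x])\bigr)$ — is where I expect essentially all the difficulty to lie, since $\xi$ is not a Teichm\"uller lift and the coefficients $\beta_k,b_k$ cannot be read off directly. The idea is to determine them one level at a time inside $M\cong\varprojlim_m W_m\Omega^1_A$: at each stage the graded piece still to be matched lands in a single $M_\ell$, and one uses surjectivity of $\varphi\colon A\to A$ together with bijectivity of multiplication by $p$ on $\Omega^1_A$ (Proposition~\ref{Omega bijection}) — which moreover makes $V$ injective on each $W_m\Omega^1_A$ — to solve the resulting equation, its solvability being governed by the same divisibility that $c$ satisfies and by the $\varphi$-twists of that divisibility. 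The partial sums then converge in $M$, and the limit displays $z=w+z'$ with sum beginning at $k=n+1$, which is $P_{n+1}$ and completes the induction.
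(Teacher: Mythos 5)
Your inductive reduction and the rewriting steps are all correct (and in fact a bit more explicit than the paper's: you extract both components of the $M_n$-condition, not just the divisibility of $e$), but the argument as written never uses the hypothesis $P_1$, and the final step — the one you flag as containing ``essentially all the difficulty'' — is exactly where it must be used. After you reach
\[
w = V^{n+1}\!\bigl(\xi\,s_\varphi(p^{-1}\varphi(\gamma))\bigr) + dV^{n+1}\!\bigl(\xi\,s_\varphi(\varphi(e))\bigr),
\]
you have shown that $w$ is in the image of $V^{n+1}$ and of $dV^{n+1}$, but not that it lies in the tail of $K^1$: the required expression $\sum_{k\ge n+1}\bigl(V^k([x]s_\varphi(\beta_k))+s_\varphi(b_k)\,dV^k([x])\bigr)$ has an $[x]$-factor in every summand, whereas $\xi$ is a general Witt vector. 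Trying to solve for $\beta_k,b_k$ one $M_\ell$-component at a time does not close the argument: at each level the component of $w$ to be matched must be shown to lie in the $x$-multiple sublattice of $M_\ell$ dictated by the $[x]$ factor, and that divisibility does not follow from surjectivity of $\varphi$ and invertibility of $p$ on $\Omega^1_A$; it is precisely the content of $P_1$. Put differently, if your level-by-level scheme worked unconditionally, the proposition would not need $P_1$ as a hypothesis at all.

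The paper's proof takes the same preparatory steps but then makes one extra move. In your notation: once $e$ is known to be $p^n$ times some $c\in A$, rewrite $w$ as
\[
w = V^n\Bigl([x]\,s_\varphi\bigl(\gamma - p^{-n}\varphi^n(da_n)\bigr) + d\bigl([x]\,s_\varphi(\varphi^n(a_n/p^n))\bigr)\Bigr),
\]
using $p^n\,dV^n = V^n d$. The inner expression is a single $k=0$ term of a $K^1$ element, and since $V^n\colon \Omega^1_A \to W_{n+1}\Omega^1_A$ is injective (because multiplication by $p^n=F^nV^n$ is injective on $\Omega^1_A$), the vanishing $R_{n+1}(w)=0$ forces the inner expression to restrict to $0$ at level $1$. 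Now $P_1$ applies to that inner element and produces a representation $\sum_{k\ge 1}\bigl(V^k([x]s_\varphi(\alpha_k))+s_\varphi(a_k)\,dV^k([x])\bigr)$; applying $V^n$ and pushing it through the sum (using $V^n(s_\varphi(a_k)\,dV^k(\cdot)) = s_\varphi(\varphi^{-n}(p^n a_k))\,dV^{k+n}(\cdot)$) gives exactly a $K^1$-representation of $w$ starting at index $n+1$. This is the step your proposal is missing, and it replaces your open-ended level-by-level construction with a single invocation of the hypothesis.
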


\begin{proof}
We prove this using induction on $n$.  Thus assume we know that property $P_{n-1}$ holds for some $n \geq 2$, and assume we have $z \in K^1$ such that $R_n(z) = 0$.  By our induction hypothesis, we can assume 
\[
z = \sum_{k = n-1}^{\infty} \left( V^k([x] s_{\varphi}(\alpha_k) ) + s_{\varphi}(a_k) dV^k([x]) \right).
\]
The terms for $k \geq n$ do not affect the conclusion, so we can in fact assume 
\begin{align*}
z &= V^{n-1}([x]s_{\varphi}(\alpha)) + s_{\varphi}(a)dV^{n-1}([x]) \\
&= V^{n-1}([x]s_{\varphi}(\alpha)) + dV^{n-1}([x]F^{n-1}(s_{\varphi}(a))) - V^{n-1}([x] F^{n-1}(ds_{\varphi}(a)) ) \\
&= V^{n-1}([x]s_{\varphi}(\alpha - \frac{1}{p^{n-1}} d (\varphi^{n-1}(a)) ) + dV^{n-1}([x]s_{\varphi}(\varphi^{n-1}(a))).
\intertext{Using Proposition~\ref{hWn}, because we are assuming $R_n(z) = 0$ and that $x$ is not divisible  by $p$, we have that $a$ must be divisible by $p^{n-1}$, and we find}
z &= V^{n-1}\bigg([x]s_{\varphi}(\alpha - \frac{1}{p^{n-1}} d (\varphi^{n-1}(a)) + d([x]s_{\varphi}(\varphi^{n-1}(a/p^{n-1}))) \bigg). 
\intertext{
The fact that $R_n(z) = 0$ implies that 
\[
[x]s_{\varphi}(\alpha - \frac{1}{p^{n-1}} d (\varphi^{n-1}(a)) + d([x]s_{\varphi}(\varphi^{n-1}(a/p^{n-1}))) 
\]
satisfies the assumption in Property $P_1$. Hence we have}
z &= V^{n-1} \bigg( \sum_{k = 1}^{\infty} \left( V^k([x] s_{\varphi}(\alpha_k) ) + s_{\varphi}(a_k) dV^k([x]) \right) \bigg) \\
&= \sum_{k = 1}^{\infty} \left( V^{k+n-1}([x] s_{\varphi}(\alpha_k) ) + s_{\varphi}(\varphi^{1-n}(a_k)) V^{n-1}(dV^k([x])) \right) \\
&= \sum_{k = 1}^{\infty} \left( V^{k+n-1}([x] s_{\varphi}(\alpha_k) ) + s_{\varphi}(\varphi^{1-n}(p^{n-1}a_k)) dV^{k+n-1}([x]) \right). 
\end{align*}
This completes the proof of Property $P_n$.
\end{proof}

\begin{lemma} \label{equivalent P_n}
An element $z \in M$ can be written in the form
\begin{align*}
z &= \sum_{k = n}^{\infty} \left( V^k([x] s_{\varphi}(\alpha_k) ) + s_{\varphi}(a_k) dV^k([x]) \right)
\intertext{if and only if}
z &\in V^n(K^1) + dV^n(K_0).
\end{align*}
In particular, Property~$P_n$ is equivalent to the following:
\begin{itemize}
\item If $z \in K^1$ and  $R_n(z) = 0$, then we have 
\[
z \in V^n(K^1) + dV^n(K_0).
\]
\end{itemize}
\end{lemma}

\begin{proof}
This follows from the same sorts of manipulations as in the above proofs.   The most difficult of these manipulations is showing that 
\[
s_{\varphi}(a_n) dV^n([x]) \in V^n(K^1) + dV^n(K_0).
\]
Using Lemma~\ref{sF degree one} and the Leibniz rule, one checks that 
\[
s_{\varphi}(a_n) dV^n([x]) = V^n\left([x] s_{\varphi} \left( \varphi^n  \left( \frac{-1}{p^n} d(a_n) \right) \right) \right)+ dV^n\bigg(s_{\varphi}(\varphi^n(a_n))[x]\bigg)\in V^n(K^1) + dV^n(K_0).\\
\]
\end{proof}

Similar manipulations show the following.

\begin{lemma} \label{Vn+dVn submodule}
For every integer $n \geq 1$, we have that $V^n(K^1) + dV^n(K_0) \subseteq M$ is a $W(A)$-submodule.
\end{lemma}

\begin{proof}
It's clear that the collection of elements of the form
\[
z = \sum_{k = n-1}^{\infty} \left( V^k([x] s_{\varphi}(\alpha_k) ) + s_{\varphi}(a_k) dV^k([x]) \right)
\]
forms an $A$-module, so we reduce to proving that $V^n(K^1) + dV^n(K_0)$ is closed under multiplication by $V^i(1)$, for $i \geq 1$.  Consider first the case $i \geq n$.  We have
\begin{align*}
V^i(1) V^n(K^1) = V^i (p^n F^{i-n}(K^1)) &\subseteq V^n(K^1)\\
V^i(1) dV^n(K^0) = V^i (F^{i-n}d(K^0)) &\subseteq V^n(K^1).
\intertext{Next we consider the case $i < n$.  We have}
V^i(1) V^n(K^1) = V^n(p^i K^1)  &\subseteq V^n(K^1)\\
V^i(1) dV^n(K^0) = d \left( V^i(1) V^n(K^0) \right) - V^n(K^0) dV^i(1) &\subseteq dV^n(K^0).
\end{align*}
\end{proof}

We cannot expect Property~$P_1$ to hold in general, as the following example shows.  In the next section we will prove that Property~$P_1$ (and hence Property~$P_n$ for every $n$) holds when $A/xA$ is a perfectoid ring satisfying Assumption~\ref{perfectoid assumption} below.

\begin{example} \label{not true for Zp}
Consider the ring $A = \ZZ_p$ and the element $x = p \in \ZZ_p$.  Clearly 
\[
 d[p] \in W_2 \Omega^1_{\ZZ_p}
\]
restricts to $dp = 0$ in $\Omega^1_{\ZZ_p}$.  On the other hand, because 
\[
[p] \equiv p + V(p^{p-1} - 1) \bmod V^2(W(\ZZ_p)),
\]
we have 
\[
d[p] =  - dV(1) \in W_2\Omega^1_{\ZZ_p}.
\]
The exactness of sequence~(\ref{hW sequence}) shows this element cannot be written as a $\ZZ_p$-linear combination of terms in $V(p\Omega^1_{\ZZ_p}) = V(\Omega^1_{\ZZ_p})$ and $dV(p) = Vd1 = 0$.  
\end{example}

\section{Applications to the de\thinspace Rham-Witt complex over perfectoid rings} \label{perfectoid section}

As usual, $p$ in this section denotes an odd prime.  The term \emph{perfectoid} was originally used in the context of algebras over a field, but we work with the more general notion of \emph{perfectoid ring} which has since been defined; see Definition~\ref{perfectoid def} below.   Examples of rings satisfying our definition of perfectoid include the $p$-adic completion of $\ZZ_p[\zeta_{p^{\infty}}]$, the $p$-adic completion of $\ZZ_p[p^{1/p^{\infty}}]$, and $\OCp$.

Throughout this section, we let $B$ denote a perfectoid ring satisfying Assumption~\ref{perfectoid assumption} below, and we let $A = W(B^{\flat})$, where 
\[ 
B^{\flat} := \varprojlim_{x \mapsto x^p} (B/pB)
\] 
is the tilt of $B$.  The ring $B^{\flat}$ is a perfect ring of characteristic~$p$.  Let $\theta: A = W(B^{\flat}) \rightarrow B$ denote the map $\theta_1$ from \cite[Section~3]{BMS16}. This is the ``usual" $\theta$ map from $p$-adic Hodge theory.  We will not need the definition of $\theta$; we will only need that it is surjective and its kernel is a principal ideal (by our definition of perfectoid).  Throughout this section, $x \in A$ denotes a fixed choice of generator for this principal ideal.  

We now explicitly state our definition of perfectoid, following \cite{BMS16}.  

\begin{definition}[{\cite[Definition~3.5]{BMS16}}] \label{perfectoid def}
A commutative ring $B$ is called \emph{perfectoid} if it is $\pi$-adically complete and separated for some element $\pi \in B$ such that $\pi^p$ divides $p$, the Frobenius map $B/pB \rightarrow B/pB$ is surjective, and the kernel of $\theta: W(B^{\flat}) \rightarrow B$ is principal.  
\end{definition}

\begin{assumption} \label{perfectoid assumption}
We further assume that our perfectoid ring $B$ is $p$-torsion free and that there exists a $p$-power torsion element $\omega \in \Omega^1_B$ such that the annihilator of $\omega$ is contained in $p^n B$ for some integer $n \geq 1$.
\end{assumption}

\begin{remark}
\begin{enumerate}
\item Assumption~\ref{perfectoid assumption} is satisfied, for example, if the perfectoid ring $B$ is contained in $\OCp$ and contains $\zeta_p$.  We do not know an elementary argument for this.  Fontaine in \cite[Th\'eor\`eme~$1^\prime$]{Fon81} gives an elementary argument to show that $d\zeta_p$ is non-zero in $\Omega^1_R$, where $R = \mathcal{O}_{\overline{\QQ_p}}$.  Bhargav Bhatt has shown us an argument involving the cotangent complex (which was used above in the proof of Proposition~\ref{Omega bijection}) to deduce that $d\zeta_p \in \Omega^1_{\OCp}$ is non-zero.  Once one knows that $d \zeta_p \neq 0$, an elementary argument shows that Assumption~\ref{perfectoid assumption} is satisfied.  We hope to consider the question, ``How restrictive is Assumption~\ref{perfectoid assumption}?", in later applications.  
\item Our proofs in this section work for any quotient $A/xA$ satisfying Assumption~\ref{perfectoid assumption}, but we do not know any interesting examples where $A/xA$ is not perfectoid.  In particular, see the next point.
\item We have been careful throughout this paper to work with $W(k)$ where $k$ is a perfect ring, instead of restricting our attention to the case where $k$ is a perfect field.  That generality is essential for Assumption~\ref{perfectoid assumption} to be reasonable, because when $k$ is a perfect field, the only $p$-torsion free quotient of $W(k)$ is the zero ring.
\end{enumerate}
\end{remark}

The entire goal of this section is to prove Proposition~\ref{B sequence proposition} below, which identifies the kernel of restriction $W_{n+1} \Omega^1_B \rightarrow W_n \Omega^1_B$ in terms of $B$ and $\Omega^1_B$.  Using a spectral sequence argument, our result will follow easily from Property~$P_n$ described in Notation~\ref{Pn notation}.  By Proposition~\ref{Pn proposition}, it will suffice to prove Property~$P_1$, which loosely says that if an element in $W_n \Omega^1_A$ is in both $\ker \theta$ and in the kernel of restriction $R_1$ to $\Omega^1_A$, then the element can be written as $V(\alpha) + dV(a)$, where both $\alpha$ and $a$ are in $\ker \theta$.  We now begin the proof that Property~$P_1$ holds.

We will apply the following lemma to our fixed $x \in A$ which generates $\ker \theta$, but it also holds for arbitrary $x \in A$.

\begin{lemma} \label{dividing by p in W(A)}
Choose $y \in W(A)$ such that $[x] = s_{\varphi}(x) + V(y)$.  Then we have 
\[
[x]^p = s_{\varphi}(\varphi(x)) + py
\]
\end{lemma}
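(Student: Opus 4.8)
The plan is to reduce the identity to two standard facts about $p$-typical Witt vectors: the formula $F([r]) = [r^p]$ for the Witt-vector Frobenius on a Teichm\"uller representative, and the identity $FV = p$; combined with the defining relation $F \circ s_{\varphi} = s_{\varphi} \circ \varphi$ from Proposition~\ref{sF map}, these give the result in one line. First I would note that the decomposition $[x] = s_{\varphi}(x) + V(y)$ makes sense: the first Witt component of $[x] \in W(A)$ is $x$, and the first Witt component of $s_{\varphi}(x)$ is also $x$ (as recalled in the proof of Lemma~\ref{V-combination lemma}), so $[x] - s_{\varphi}(x)$ lies in $V(W(A))$; since $A$, and hence $W(A)$, is $p$-torsion free, $V$ is injective and $y$ is uniquely determined, although uniqueness is not needed here.

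The key observation is that $[x]^p = [x^p] = F([x])$, where the first equality is the multiplicativity of the Teichm\"uller map and the second is the standard formula for the Witt-vector Frobenius on a Teichm\"uller representative (valid over an arbitrary ring, e.g.\ by reduction to the universal, $p$-torsion free case). Applying the ring homomorphism $F \colon W(A) \to W(A)$ to the relation $[x] = s_{\varphi}(x) + V(y)$ and using $F \circ s_{\varphi} = s_{\varphi} \circ \varphi$ together with $FV = p$, one obtains
\[
[x]^p \;=\; F([x]) \;=\; F(s_{\varphi}(x)) + F(V(y)) \;=\; s_{\varphi}(\varphi(x)) + p y,
\]
which is precisely the assertion.

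An equally short alternative is to work directly with ghost components. Since $A$ is $p$-torsion free, the ghost map $w \colon W(A) \to A^{\NN}$ is injective; the ghost components of $[x]$ are $(x, x^p, x^{p^2}, \dots)$ and, by Proposition~\ref{sF map}, those of $s_{\varphi}(\varphi(x))$ are $(\varphi(x), \varphi^2(x), \dots)$. The relation $[x] = s_{\varphi}(x) + V(y)$ then forces $p\, w_n(y) = x^{p^{n+1}} - \varphi^{n+1}(x)$ for all $n \geq 0$, and substituting this into the ghost components of $s_{\varphi}(\varphi(x)) + py$ recovers $(x^p, x^{p^2}, \dots)$, the ghost components of $[x]^p$, so the two Witt vectors coincide.

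I do not anticipate any real obstacle; the only points requiring care are fixing the correct normalizations for $F$, $V$, and the Teichm\"uller lift, and --- in the ghost-component variant --- recording that it is the $p$-torsion freeness of $A$ that makes the ghost map injective.
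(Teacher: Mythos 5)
Your first argument is exactly the paper's proof: apply $F$ to both sides of $[x] = s_{\varphi}(x) + V(y)$ and use $F([x]) = [x]^p$, $F \circ s_{\varphi} = s_{\varphi} \circ \varphi$, and $FV = p$. The ghost-component variant is a correct but unneeded alternative.
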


\begin{proof}
Apply $F$ to both sides of $[x] = s_{\varphi}(x) + V(y)$.  
\end{proof}

Property~$P_1$ concerns elements which are both in the kernel of $W_n(\theta): W_{n} \Omega^1_A \rightarrow W_n \Omega^1_{(A/xA)}$ and also in the kernel of restriction $R_1: W_n \Omega^1_A \rightarrow \Omega^1_A$.  The following lemma considers the case of a particular element which is obviously in this intersection.  

\begin{lemma} \label{obvious zero}
We have $[x] ds_{\varphi}(x) - s_{\varphi}(x) d[x] \in V(K^1) + dV(K^0).$
\end{lemma}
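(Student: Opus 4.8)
The plan is to eliminate the ring‑homomorphism lift $s_{\varphi}$ in favour of the Teichm\"uller lift $[x]$, using the presentation $[x] = s_{\varphi}(x) + V(y)$ from Lemma~\ref{dividing by p in W(A)}. Write $s_{\varphi}(x) = [x] - V(y)$, so $ds_{\varphi}(x) = d[x] - dV(y)$. Substituting into $[x]\,ds_{\varphi}(x) - s_{\varphi}(x)\,d[x]$ and expanding, the two copies of $[x]\,d[x]$ cancel, and one is left with
\[
[x]\,ds_{\varphi}(x) - s_{\varphi}(x)\,d[x] = V(y)\,d[x] - [x]\,dV(y).
\]
The reason for choosing this substitution (rather than leaving $[x]$ in terms of $s_{\varphi}(x)$) is that now every differential appearing is $d[x]$, which lets us invoke the Teichm\"uller‑specific relation $Fd[x] = [x]^{p-1}d[x]$.

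Next I would push each term into $V(K^{1}) + dV(K^{0})$. For the first term: degree‑one and degree‑zero elements commute, so using the $V$‑module relation $V(F(\omega)\eta) = \omega V(\eta)$ and $Fd[x] = [x]^{p-1}d[x]$ we get $V(y)\,d[x] = d[x]\cdot V(y) = V\big(F(d[x])\,y\big) = V\big([x]^{p-1}\,d[x]\,y\big)$. For the second term: the Leibniz rule together with $[x]\,V(y) = V\big(F([x])y\big) = V\big([x]^{p}y\big)$ gives
\[
[x]\,dV(y) = d\big([x]\,V(y)\big) - d[x]\cdot V(y) = dV\big([x]^{p}y\big) - V\big([x]^{p-1}\,d[x]\,y\big).
\]
Combining the two computations,
\[
[x]\,ds_{\varphi}(x) - s_{\varphi}(x)\,d[x] = 2\,V\big([x]^{p-1}\,d[x]\,y\big) - dV\big([x]^{p}y\big).
\]

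It then remains to check that the arguments lie in the appropriate submodules. Since $x$ maps to $0$ in $A/xA$, both $[x]^{p} = [x^{p}]$ and $[x]^{p-1} = [x^{p-1}]$ map to $0$ in $W(A/xA)$, hence lie in $K^{0}$; as $K^{0}$ is an ideal of $W(A)$ (Lemma~\ref{Witt quotient}), we get $[x]^{p}y \in K^{0}$ and so $dV([x]^{p}y) \in dV(K^{0})$. On the other hand $d[x] \in K^{1}$ — it is the $k=0$ term $s_{\varphi}(a_{0})\,dV^{0}([x])$ with $a_{0} = 1$ in Definition~\ref{K definition} — and $K^{1}$ is stable under multiplication by elements of $W(A)$ (this is established in the proof of Lemma~\ref{kernel ideal}), so $[x]^{p-1}\,d[x]\,y \in K^{1}$ and therefore $2\,V\big([x]^{p-1}\,d[x]\,y\big) \in V(K^{1})$. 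This yields $[x]\,ds_{\varphi}(x) - s_{\varphi}(x)\,d[x] \in V(K^{1}) + dV(K^{0})$, as claimed.

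I expect the only genuine difficulty to be the choice of first move. If one instead substitutes $[x] = s_{\varphi}(x) + V(y)$, the same cancellation produces $V(y)\,ds_{\varphi}(x) - s_{\varphi}(x)\,dV(y)$, which again restricts to $0$ in $\Omega^{1}_{A}$; but the surviving factors $ds_{\varphi}(x)$ resist being absorbed into $K^{0}$ or $K^{1}$, and every further rewriting only re‑uses the single relation $[x] = s_{\varphi}(x) + V(y)$ and so goes in circles. It is precisely the identity $Fd[x] = [x]^{p-1}d[x]$, available only after all differentials have been turned into $d[x]$, that breaks this deadlock, and no appeal to Proposition~\ref{Omega bijection} or to any $p$‑divisibility argument is needed.
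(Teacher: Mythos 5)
Your proof is correct, and it takes a genuinely (if mildly) different route from the paper's. The paper substitutes $[x] = s_{\varphi}(x) + V(y)$ in the two outer factors and works with the intermediate expression $V(y)\,ds_{\varphi}(x) - s_{\varphi}(x)\,dV(y)$; because it then uses $F(s_{\varphi}(x)) = [x]^p - py$ (Lemma~\ref{dividing by p in W(A)}) rather than the simpler $F([x]) = [x]^p$, an extra $-py^2$ term appears that must be moved across and absorbed, and the crucial factor $[x]^{p-1}d[x]$ only emerges at the very last step via the observation $Fds_{\varphi}(x) + dy = Fd\big([x] - V(y)\big) + dy = [x]^{p-1}d[x]$. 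You substitute $s_{\varphi}(x) = [x] - V(y)$ instead, obtain $V(y)\,d[x] - [x]\,dV(y)$, and can immediately exploit $F([x]) = [x]^p$ and $Fd[x] = [x]^{p-1}d[x]$, arriving at $2V\big([x]^{p-1}d[x]\,y\big) - dV\big([x]^p y\big)$ without any auxiliary cancellation. Your approach is therefore a bit cleaner and makes the role of the Teichm\"uller relations more transparent. The concluding memberships $[x]^p y \in K^0$ and $[x]^{p-1}d[x]\,y \in K^1$ are argued correctly: $K^0$ is an ideal containing $[x]$, and the closure of $K^1$ under $W(A)$-multiplication is exactly what the proof of Lemma~\ref{kernel ideal} establishes (and what the paper invokes again in Proposition~\ref{Witt complex over B}). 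One small inaccuracy in your closing commentary: the alternative substitution does \emph{not} lead to a deadlock — the paper's own proof uses precisely $[x] = s_{\varphi}(x) + V(y)$ and completes by invoking $Fd[x] = [x]^{p-1}d[x]$ one step later than you do. But that remark does not affect the correctness of your argument.
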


\begin{proof}
We use the notation from Lemma~\ref{dividing by p in W(A)}.  We compute
\begin{align*}
[x] ds_{\varphi}(x) - s_{\varphi}(x) d[x] &= \bigg(s_{\varphi}(x) + V(y) \bigg) ds_{\varphi}(x) - s_{\varphi}(x) d \bigg(s_{\varphi}(x) + V(y) \bigg) \\
&= V(y) ds_{\varphi}(x) - s_{\varphi}(x) dV(y) \\
&= V\big(y Fds_{\varphi}(x)\big) - d \big( s_{\varphi}(x) V(y) ) + V(y) ds_{\varphi}(x) \\
&= V\big(2y Fds_{\varphi}(x) \big) - dV(y F(s_{\varphi}(x))) \\
&= V\big(2y Fds_{\varphi}(x) \big) - dV(y [x]^p - py^2). \\
\intertext{Because the term $dV(y[x]^p) \in dV(K^0)$, we reduce to showing the following element is in $V(K^1)$. }
V\big(2y Fds_{\varphi}(x) \big) + dV(py^2) &= V \big( 2y Fds_{\varphi}(x) + 2ydy \big) \\
&= V\big( 2y \left( Fds_{\varphi}(x) + dy \right) \big) \\
&= V\big( 2y \left( Fd \left( [x] - V(y) \right) + dy \right) \big) \\
&= V\big( 2y \left( [x]^{p-1} d[x] - dy + dy \right) \big) \in V(K^1).
\end{align*}
This completes the proof.  
\end{proof}

\begin{lemma} \label{x-torsion}
If $x \alpha_1 = 0 \in \Omega^1_A$, then $[x] s_{\varphi}(\alpha_1) \in V(K^1)$.
\end{lemma}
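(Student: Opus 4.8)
The plan is to reduce the statement, via Lemma~\ref{dividing by p in W(A)}, to an assertion of membership in $K^1$, and then to verify that membership by a recursion in the direct-sum description of $\varprojlim W_n\Omega^1_A$ from Corollary~\ref{de Rham-Witt as A-module}.

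First I would write $[x] = s_{\varphi}(x) + V(y)$ as in Lemma~\ref{dividing by p in W(A)}. Since the map $\Omega^1_A \to W\Omega^1_A$ induced by $s_{\varphi}$ satisfies $s_{\varphi}(a\omega) = s_{\varphi}(a)s_{\varphi}(\omega)$, the hypothesis $x\alpha_1 = 0$ gives $s_{\varphi}(x)s_{\varphi}(\alpha_1) = s_{\varphi}(x\alpha_1) = 0$, and the projection formula $V(\eta)\zeta = V(\eta F(\zeta))$ yields
\[
[x]s_{\varphi}(\alpha_1) = s_{\varphi}(x)s_{\varphi}(\alpha_1) + V(y)s_{\varphi}(\alpha_1) = V\big(y\,s_{\varphi}(\varphi(\alpha_1))\big).
\]
So it suffices to prove that $w := y\,s_{\varphi}(\varphi(\alpha_1))$ lies in $K^1$. (As a consistency check, multiplying by $p$ and using $py = [x]^p - s_{\varphi}(\varphi(x))$ together with $\varphi(x)\varphi(\alpha_1) = \varphi(x\alpha_1) = 0$ gives $pw = [x]^p s_{\varphi}(\varphi(\alpha_1)) = [x]\cdot\big([x]^{p-1}s_{\varphi}(\varphi(\alpha_1))\big)$, which lies in $K^1$ by Lemma~\ref{kernel ideal}; but this only shows $pw \in K^1$, not $w \in K^1$, so it does not close the argument.)

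To prove $w \in K^1$ I would exhibit elements $\beta_0, \beta_1, \ldots \in \Omega^1_A$ with $w = \sum_{k \geq 0} V^k\!\big([x]s_{\varphi}(\beta_k)\big)$, which by Definition~\ref{K definition} (taking the $a_k$ there to be $0$) lies in $K^1$. Using Lemma~\ref{V-combination lemma} and the Teichm\"uller coefficients $t_i$ of $[x]$ from Lemma~\ref{Teich lemma} (so $t_0 = x$ and $\varphi^i(t_i) = (x^{p^i} - \varphi(x)^{p^{i-1}})/p^i$), one computes that the level-$l$ component of $w$ in the decomposition of Corollary~\ref{de Rham-Witt as A-module} is $\varphi^{l+1}(t_{l+1})\,\varphi^{l+1}(\alpha_1)$, while that of $\sum_k V^k([x]s_{\varphi}(\beta_k))$ is $\sum_{k=0}^{l}\varphi^{l-k}(t_{l-k})\,\varphi^{l-k}(\beta_k)$ (both components lie in the $\Omega^1_A$-summand, so the $A$-coordinates match automatically). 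I would then choose the $\beta_k$ recursively: having chosen $\beta_0,\ldots,\beta_{l-1}$, the equation at level $l$ becomes
\[
x\,\beta_l = \varphi^{l+1}(t_{l+1})\,\varphi^{l+1}(\alpha_1) - \sum_{k=0}^{l-1}\varphi^{l-k}(t_{l-k})\,\varphi^{l-k}(\beta_k),
\]
so what must be checked is that the right-hand side lies in $x\,\Omega^1_A$; Proposition~\ref{Omega bijection} (which lets one divide freely by $p$ in $\Omega^1_A$) then produces a suitable $\beta_l$. For $l = 0$ the right-hand side is $\varphi(t_1)\varphi(\alpha_1) = \big((x^p - \varphi(x))/p\big)\varphi(\alpha_1) = p^{-1}x^p\varphi(\alpha_1)$, where the hypothesis enters exactly through $\varphi(x)\varphi(\alpha_1) = 0$, and this is manifestly in $x\,\Omega^1_A$. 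For $l \geq 1$ the hypothesis is no longer needed: the divisibility by $x$ falls out of the $p$-adic congruences $z^{p^i} \equiv \varphi(z)^{p^{i-1}} \bmod p^i$ governing the $t_i$, in the sharpened form of Lemma~\ref{congruence F lemma} (namely $\varphi^{i+1}(t_{i+1}) \equiv \varphi^i(t_i)\,x^{p^i(p-1)} \bmod p^iA$).

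The hard part is precisely this recursive step: establishing uniformly in $l$ that the error term on the right-hand side above is divisible by $x$ in $\Omega^1_A$. This is a concrete instance of the obstruction that recurs throughout the paper, namely that the Teichm\"uller lift $[x]$ and the ring homomorphism $s_{\varphi}$ are not simply related, and it requires a careful induction that tracks the shape of the partial sums $\sum_{k<l}\varphi^{l-k}(t_{l-k})\varphi^{l-k}(\beta_k)$ against the congruences of Lemma~\ref{congruence F lemma}. Everything else is routine manipulation with the relations $V(\eta F(\zeta)) = V(\eta)\zeta$, $FV = p$, $Vd = pdV$, and the explicit structure maps recorded in Remark~\ref{M Witt complex}.
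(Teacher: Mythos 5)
Your proposal has a computational error early on, and it also misses the much shorter route that the paper takes — a route you actually brush against in your ``consistency check'' but then abandon.

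The error: after writing $[x]s_{\varphi}(\alpha_1) = V(y)s_{\varphi}(\alpha_1) = V\big(y\,F(s_{\varphi}(\alpha_1))\big)$, you identify $F(s_{\varphi}(\alpha_1))$ with $s_{\varphi}(\varphi(\alpha_1))$. But $\alpha_1$ has degree one, and on $\Omega^1_A$ the relation $dF = pFd$ introduces a factor of $p$: by Lemma~\ref{sF degree one}, $F \circ s_{\varphi} = s_{\varphi}\circ\varphi\circ\frac{1}{p}$, so $F(s_{\varphi}(\alpha_1)) = s_{\varphi}\!\left(\varphi(\alpha_1)/p\right)$. (Dividing by $p$ in $\Omega^1_A$ is harmless by Proposition~\ref{Omega bijection}.) Your $w$ is therefore $p$ times the correct element $w' := y\,s_{\varphi}\!\left(\varphi(\alpha_1)/p\right)$.

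The missed idea: you observe, correctly, that showing $pw \in K^1$ does not show $w \in K^1$, and you therefore launch into a recursion through the decomposition of Corollary~\ref{de Rham-Witt as A-module}, which you leave unfinished and flag as ``the hard part.'' None of that is needed. Since multiplication by $p$ is bijective on $\Omega^1_A$, the hypothesis $x\alpha_1 = 0$ automatically upgrades to $x\cdot(\alpha_1/p^N) = 0$ and hence $\varphi(x)\cdot\varphi(\alpha_1)/p^N = 0$ for every $N$. Now apply your own ``consistency check'' computation one level deeper: from $w' = y\,s_{\varphi}\!\left(\varphi(\alpha_1)/p\right) = (py)\,s_{\varphi}\!\left(\varphi(\alpha_1)/p^2\right)$ and $py = [x]^p - s_{\varphi}(\varphi(x))$, one gets
\[
w' = [x]^p\,s_{\varphi}\!\left(\frac{\varphi(\alpha_1)}{p^2}\right) - s_{\varphi}\!\left(\frac{\varphi(x)\varphi(\alpha_1)}{p^2}\right) = [x]^p\,s_{\varphi}\!\left(\frac{\varphi(\alpha_1)}{p^2}\right),
\]
which is visibly in $K^1$, so $[x]s_{\varphi}(\alpha_1) = V(w') \in V(K^1)$. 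This is exactly the paper's proof. The upshot is that the recursion against Lemma~\ref{congruence F lemma} that you propose is both unproven and unnecessary: the essential fact is that $p$ is invertible on $\Omega^1_A$, so one extra division by $p$ lets you trade $py$ for $[x]^p - s_{\varphi}(\varphi(x))$ and kill the second term against the hypothesis.
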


\begin{proof}
The key idea is that, because multiplication by $p$ is a bijection on $\Omega^1_A$, we also have that $\frac{x \alpha_1}{p^N} = 0 \in \Omega^1_A$ for every integer $N \geq 1$.  Applying Frobenius to both sides, we have $\frac{\varphi(x) \varphi(\alpha_1)}{p^N} = 0 \in \Omega^1_A$.  We will apply this observation in the case $N = 2$.  

Use the same notation as in Lemma~\ref{dividing by p in W(A)}.  We have
\begin{align*}
[x] s_{\varphi}(\alpha_1) &= s_{\varphi}(x) s_{\varphi}(\alpha_1) + V(y) s_{\varphi}(\alpha_1) \\
&= V(y) s_{\varphi}(\alpha_1) \\
&= V(y  F(s_{\varphi}(\alpha_1))) \\
&= V\left(y s_{\varphi}\left( \frac{\varphi(\alpha_1)}{p} \right) \right) \\
&= V\left(py s_{\varphi}\left( \frac{\varphi(\alpha_1)}{p^2} \right) \right) \\
&= V\left( \bigg( [x]^p - s_{\varphi}(\varphi (x)) \bigg) s_{\varphi}\left( \frac{\varphi(\alpha_1)}{p^2} \right) \right) \\
&= V\left(  [x]^p s_{\varphi}\left( \frac{\varphi(\alpha_1)}{p^2} \right) \right) \in V(K^1).
\end{align*}
\end{proof}

Using Assumption~\ref{perfectoid assumption}, we have a $p$-power torsion element $\omega \in \Omega^1_B$ with annihilator contained in $p^nB$ for some integer $n \geq 1$.  For every integer $r \geq 1$, the following lemma enables us to produce a $p$-power torsion element $\eta \in \Omega^1_B$ with annihilator contained in $p^{n+r}B$.  

\begin{lemma} \label{dividing annihilator}
Assume $\omega \in \Omega^1_B$ is such that $\Ann \omega \subseteq p^n B$, where $n \geq 1$ is an integer.  If $\eta \in \Omega^1_B$ is an element such that $p^r \eta = \omega$ for some integer $r \geq 1$, then $\Ann \eta \subseteq p^{n+r}B$.  
\end{lemma}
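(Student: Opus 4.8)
The plan is to prove the statement by induction on $r$, reducing everything to the case $r=1$, which is a short computation that uses only that $B$ is $p$-torsion free (part of Assumption~\ref{perfectoid assumption}).

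First I would settle the base case $r=1$. Suppose $p\eta = \omega$ with $\Ann\omega \subseteq p^nB$, and let $b \in \Ann\eta$, i.e.\ $b\eta = 0$. Then $b\omega = p(b\eta) = 0$, so $b \in \Ann\omega \subseteq p^nB$; write $b = p^n b_1$. Since $n \geq 1$ the element $p^{n-1}b_1$ makes sense, and $p^{n-1}b_1\,\omega = p^{n-1}b_1(p\eta) = p^n b_1\eta = b\eta = 0$, so $p^{n-1}b_1 \in \Ann\omega \subseteq p^nB$, say $p^{n-1}b_1 = p^n c$. Because $B$ is $p$-torsion free, this forces $b_1 = pc \in pB$, hence $b = p^n b_1 \in p^{n+1}B$. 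Thus $\Ann\eta \subseteq p^{n+1}B$, which is the $r=1$ case.

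For the inductive step I would assume the statement for $r$ and suppose $p^{r+1}\eta = \omega$ with $\Ann\omega \subseteq p^nB$. Setting $\eta' := p\eta$, we have $p^r\eta' = p^{r+1}\eta = \omega$, so the inductive hypothesis gives $\Ann\eta' \subseteq p^{n+r}B$. Now the pair $(\eta',\eta)$ satisfies $p\eta = \eta'$, so applying the $r=1$ case with $n$ replaced by $n+r$ yields $\Ann\eta \subseteq p^{n+r+1}B$, completing the induction and hence the proof.

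There is no real obstacle here; the only points needing care are that $n\geq 1$, so the exponent $n-1$ is non-negative and $p^{n-1}b_1$ is a genuine element of $B$, and that the cancellation $p^{n-1}b_1 = p^n c \Rightarrow b_1 = pc$ is exactly where $p$-torsion-freeness of $B$ is invoked.
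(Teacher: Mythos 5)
Your proof is correct and follows essentially the same approach as the paper: both reduce to the case $r=1$ and then, given $b\eta = 0$, use $\Ann\omega \subseteq p^nB$ to write $b = p^n b_1$, observe $p^{n-1}b_1\omega = 0$, and invoke $p$-torsion-freeness of $B$ to conclude $b_1 \in pB$. The only difference is that the paper states the reduction to $r=1$ without comment while you spell out the induction, which is a reasonable thing to make explicit.
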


\begin{proof}
It suffices to prove this in the case $r = 1$, so let $\eta \in \Omega^1_B$ be such that $p \eta = \omega$.  Let $b \in \Ann \eta$.  Then in particular $b \in \Ann \omega$, so we can write $b = p^n b_0$ for some $b_0 \in B$.  Then we know
\[
0 = b \eta = p^n b_0 \eta = p^{n-1} b_0 \omega,
\]
and hence $p^{n-1} b_0 \in p^n B$.  Assumption~\ref{perfectoid assumption} requires that $B$ is $p$-torsion free, so we deduce that $b_0 \in pB$, and hence $b \in p^{n+1}B$, as required.
\end{proof}

The following is the most important of the preliminary results in this section.  If we could prove Proposition~\ref{a in xA} without using the element $\omega$ from Assumption~\ref{perfectoid assumption}, then the results of this section would hold for all $p$-torsion free perfectoid rings. 

\begin{proposition} \label{a in xA}
If $a dx \in x \Omega^1_A$, then $a \in xA$.  
\end{proposition}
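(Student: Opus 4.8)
The plan is to translate the claim into a statement about the $B$-module $\Theta := \Omega^1_A \otimes_A B = \Omega^1_A / x\Omega^1_A$ and the class $\overline{dx} \in \Theta$. Writing $\bar K := \Ann_B(\overline{dx})$, the hypothesis $a\,dx \in x\Omega^1_A$ says precisely that the image $\bar a$ of $a$ in $B$ lies in $\bar K$, so the proposition is equivalent to $\bar K = 0$. The strategy will be to prove the much stronger statement that $\bar K$ annihilates \emph{every} $p$-power torsion element of $\Omega^1_B$, and then to combine this with Lemma~\ref{dividing annihilator}, which shrinks the annihilator of a fixed torsion element below every power of $p$; these two facts together will trap $\bar K$ inside $\bigcap_m p^m B = 0$.

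First I would record two easy preliminaries. (i) $\Theta$ is $p$-torsion free: if $p^m \omega \in x\Omega^1_A$ with $\omega \in \Omega^1_A$, then, writing $x\gamma = p^m\omega$ and using that multiplication by $p$ is bijective on $\Omega^1_A$ (Proposition~\ref{Omega bijection}), we may take $\gamma = p^m\gamma'$ and conclude $p^m(x\gamma' - \omega) = 0$, hence $\omega = x\gamma' \in x\Omega^1_A$. (ii) The map $\Theta \to \Omega^1_B$ is surjective, and by Lemma~\ref{n = 1 kernel} its kernel is the image of $A\,dx$ in $\Theta$, i.e.\ the cyclic $B$-submodule $B\cdot\overline{dx}$.

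The key step is then: if $\eta \in \Omega^1_B$ satisfies $p^m\eta = 0$, then $\bar K \subseteq \Ann_B(\eta)$. To see this, lift $\eta$ to $\tilde\eta \in \Theta$; by (ii) we have $p^m\tilde\eta \in B\cdot\overline{dx}$, say $p^m\tilde\eta = \bar c\,\overline{dx}$. For $\bar b \in \bar K$ we then compute $p^m(\bar b\,\tilde\eta) = \bar c\,(\bar b\,\overline{dx}) = 0$ in $\Theta$, and since $\Theta$ is $p$-torsion free this forces $\bar b\,\tilde\eta = 0$, hence $\bar b\,\eta = 0$ in $\Omega^1_B$, as claimed.

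Finally I would bring in Assumption~\ref{perfectoid assumption}: its element $\omega$ is $p$-power torsion with $\Ann_B(\omega) \subseteq p^n B$, so the key step gives $\bar K \subseteq p^n B$. Moreover $\Omega^1_B$ is $p$-divisible (since $B/pB$ has surjective Frobenius, each $b \in B$ satisfies $b \equiv b_0^p \bmod pB$, so $db \in p\Omega^1_B$, and such elements generate $\Omega^1_B$), so for every $r \geq 1$ we may write $\omega = p^r\eta_r$ with $\eta_r \in \Omega^1_B$; Lemma~\ref{dividing annihilator} gives $\Ann_B(\eta_r) \subseteq p^{n+r}B$, while the key step gives $\bar K \subseteq \Ann_B(\eta_r)$. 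Hence $\bar K \subseteq \bigcap_{r \geq 1} p^{n+r}B = \bigcap_m p^m B$, and this intersection is $0$ because $B$ is $p$-adically separated (from $\pi^p \mid p$ together with $\pi$-adic separatedness). Therefore $\bar K = 0$, which is the proposition. I expect the heart of the argument to be the key step --- recognising that the right object to control is ``$\bar K$ annihilates all torsion'', and that the $p$-torsion-freeness of $\Theta$ established in (i) is exactly what makes this go through; the rest is bookkeeping with Lemma~\ref{dividing annihilator} and the $p$-divisibility of $\Omega^1_B$.
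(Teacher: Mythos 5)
Your proof is correct and uses exactly the same ingredients as the paper's (Proposition~\ref{Omega bijection}, Lemma~\ref{n = 1 kernel}, Lemma~\ref{dividing annihilator}, Assumption~\ref{perfectoid assumption}, and $p$-adic separatedness of $B$), but you repackage the argument more cleanly. Where the paper lifts $\omega$ to $\omega_A\in\Omega^1_A$, writes $p^m\omega_A = x\alpha'+a'\,dx$ using Lemma~\ref{n = 1 kernel}, divides by $p^N$ inside $\Omega^1_A$, and then multiplies the given $a\,dx/p^N$ by the scalar coefficient $a_N$ so that applying $\theta$ places $\theta(a)$ in the annihilator of a $p$-power root $\eta$ of $\omega$, you instead pass to $\Theta=\Omega^1_A/x\Omega^1_A$ and isolate two reusable facts: the $p$-torsion-freeness of $\Theta$ (a short consequence of Proposition~\ref{Omega bijection} that the paper uses only implicitly through its divisions by powers of $p$ in $\Omega^1_A$), and the stronger ``key step'' that $\bar K=\Ann_B(\overline{dx})$ annihilates \emph{every} $p$-power-torsion class in $\Omega^1_B$ (the paper establishes this only for the specific $\eta$ it constructs). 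Your derivation of $p$-divisibility of $\Omega^1_B$ from Frobenius-surjectivity on $B/pB$ is a harmless variant of the paper's route through $\Omega^1_A$; the resulting chain $\bar K\subseteq\Ann_B(\eta_r)\subseteq p^{n+r}B$ and the conclusion $\bar K\subseteq\bigcap_m p^m B=0$ are the same. The gain in your version is purely structural: the heart of the argument (that $\bar K$ kills all torsion, forced by $p$-torsion-freeness of $\Theta$) is made explicit and could be cited on its own, whereas in the paper it is woven into an element-by-element computation.
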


\begin{proof}
Our hypothesis implies $a \frac{dx}{p^N} \in \ker \theta$ for every integer $N \geq 0$, and we will show this implies $\displaystyle \theta(a) \in \cap \, p^r B = 0$.

Fix an integer $N \geq 1$.  Because $\theta: A \rightarrow B$ is surjective, we know the induced map $\Omega^1_{A} \rightarrow \Omega^1_B$ is surjective.  Let $\omega_A \in \Omega^1_A$ map to the element $\omega \in \Omega^1_B$ described in Assumption~\ref{perfectoid assumption}.  Because $\omega$ is $p$-power torsion, we know that $p^m \omega_A \in x \Omega^1_A + A dx$ for some integer $m \geq 1$.  Thus, for every integer $N \geq 1$, we can write $\frac{1}{p^{N-m}} \omega_A = x \alpha_N + a_N \frac{dx}{p^{N}}$ for some $\alpha_N \in \Omega^1_A$ and $a_N \in A$.  

Consider now the element $a dx \in x\Omega^1_A$ from the statement of this proposition.  We deduce that $a \frac{dx}{p^N} \in x\Omega^1_A$ for every integer $N \geq 1$, so $a \frac{dx}{p^N} \in \ker \theta$ for every integer $N \geq 1$.  If we multiply by the element $a_N$ from the previous paragraph, we know that $a a_N \frac{dx}{p^N}$ is in $\ker \theta$ for every integer $N \geq 1$.  If we apply $\theta$ to $a a_N \frac{dx}{p^N}$, we see that $\theta(a) \in B$ is in the annihilator of some element $\eta$ satisfying $p^{N-m} \eta = \omega$.  Thus, by Lemma~\ref{dividing annihilator}, we have that $\theta(a) \in B$ is divisible by arbitrarily large powers of $p$.  Thus $a \in xA$, as required.
\end{proof}

\begin{remark}
Proposition~\ref{a in xA} implies that for our particular rings $A$ and $A/xA$, the left-most map in the exact sequence (\ref{2nd sequence}) is injective.
\end{remark}

\begin{proposition} \label{lift to kernel}
If $x \alpha + a dx = 0 \in \Omega^1_A$, then $[x] s_{\varphi}(\alpha) + s_{\varphi}(a) d[x] \in V(K^1) + dV(K^0)$.  
\end{proposition}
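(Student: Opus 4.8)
The plan is to deduce this from the three preceding results, Proposition~\ref{a in xA}, Lemma~\ref{obvious zero}, and Lemma~\ref{x-torsion}. The only essential use of the hypothesis $x\alpha + a\,dx = 0$ is the following: it gives $a\,dx = -x\alpha \in x\Omega^1_A$, so Proposition~\ref{a in xA} applies and we may write $a = xb$ with $b \in A$. This is exactly the step that invokes Assumption~\ref{perfectoid assumption} (through Proposition~\ref{a in xA}); everything afterwards is formal manipulation inside $M$.

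First I would observe that $\alpha + b\,dx$ is annihilated by $x$, since $x(\alpha + b\,dx) = x\alpha + a\,dx = 0$. Lemma~\ref{x-torsion} then gives
\[
[x]\,s_{\varphi}(\alpha + b\,dx) = [x]\,s_{\varphi}(\alpha) + [x]\,s_{\varphi}(b)\,ds_{\varphi}(x) \in V(K^1),
\]
where I use that $s_{\varphi}$ is induced by a ring homomorphism, hence is additive and commutes with $d$, so that $s_{\varphi}(\alpha + b\,dx) = s_{\varphi}(\alpha) + s_{\varphi}(b)\,ds_{\varphi}(x)$. It therefore remains to compare the element in the statement with this one, i.e.\ to show that $s_{\varphi}(a)\,d[x] - [x]\,s_{\varphi}(b)\,ds_{\varphi}(x) \in V(K^1) + dV(K^0)$. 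Substituting $a = xb$ and using commutativity of the graded ring, this difference equals $s_{\varphi}(b)\bigl(s_{\varphi}(x)\,d[x] - [x]\,ds_{\varphi}(x)\bigr)$, and the factor in parentheses lies in $V(K^1) + dV(K^0)$ by Lemma~\ref{obvious zero}. Adding the two contributions yields $[x]\,s_{\varphi}(\alpha) + s_{\varphi}(a)\,d[x] \in V(K^1) + dV(K^0)$.

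The one point needing a word of care is that $V(K^1) + dV(K^0)$ be stable under multiplication by $s_{\varphi}(b)$, so that the last computation is legitimate; in fact it is a $W(A)$-submodule of $M$. This follows from the facts that $K^0$ is an ideal of $W(A)$, that $K^1$ is stable under multiplication by $W(A)$ (already used in the proof of Proposition~\ref{Witt complex over B}), and that the product of $K^0$ with degree-one elements lies in $K^1$ (part of the ideal statement of Lemma~\ref{kernel ideal}), combined with the relations $w\,V(z) = V(F(w)z)$ and $w\,dV(c) = dV(F(w)c) - V(c\,F(dw))$. I do not expect a genuine obstacle: the real difficulty of this section is concentrated in Proposition~\ref{a in xA}, and once that is in hand the present proposition is a short consequence, modulo the routine module-structure bookkeeping just indicated.
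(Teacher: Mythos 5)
Your proof is correct and follows essentially the same route as the paper's: apply Proposition~\ref{a in xA} to write $a = xb$, apply Lemma~\ref{x-torsion} to $\alpha + b\,dx$, and reduce to the identity $[x]\,ds_{\varphi}(x) - s_{\varphi}(x)\,d[x] \in V(K^1) + dV(K^0)$ provided by Lemma~\ref{obvious zero}. The only difference is that you explicitly flag and justify the closure of $V(K^1) + dV(K^0)$ under multiplication by $s_{\varphi}(b)$, a step the paper treats as immediate when it factors out $s_{\varphi}(a_1)$; that extra care is correct and harmless.
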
 

\begin{proof}
We have $adx = - x \alpha$, so by Proposition~\ref{a in xA}, we know that $a = xa_1$ for some $a_1 \in A$, and thus our assumption means $x (\alpha + a_1 dx) = 0 \in \Omega^1_A$.  By Lemma~\ref{x-torsion}, we know that $[x] (s_{\varphi}(\alpha) + s_{\varphi}(a_1) d(s_{\varphi}(x)) ) \in V(K^1).$  Thus it suffices to show that 
\[
[x]s_{\varphi}(a_1) ds_{\varphi}(x) - s_{\varphi}(x) s_{\varphi}(a_1) d[x] \in V(K^1) + dV(K^0).
\]
Thus, by Lemma~\ref{Vn+dVn submodule}, it suffices to show that 
\[
[x] ds_{\varphi}(x) - s_{\varphi}(x) d[x] \in V(K^1) + dV(K^0).
\]
So we're done by Lemma~\ref{obvious zero}.  
\end{proof} 

Consider now an arbitrary element $y \in K^1$, 
\[
y = \sum_{k = 0}^{\infty} \left( V^k([x] s_{\varphi}(\alpha_k) ) + s_{\varphi}(a_k) dV^k([x]) \right),
\]
and assume it restricts to 0 in level one, i.e., assume $R_1(y) = 0 \in \Omega^1_A$.  This means that 
\[
x \alpha_0 + a_0 dx = 0 \in \Omega^1_A.  
\]
Then Proposition~\ref{lift to kernel} shows that Property~$P_1$ from Notation~\ref{Pn notation} holds.  We immediately deduce the following from Proposition~\ref{Pn proposition}.   

\begin{corollary} \label{Pn holds}
For every $n \geq 1$, Property~$P_n$ from Notation~\ref{Pn notation} holds.
\end{corollary}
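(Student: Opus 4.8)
The plan is to reduce the Corollary to Property~$P_1$ via Proposition~\ref{Pn proposition}, and to extract Property~$P_1$ from Proposition~\ref{lift to kernel}. So first I would take an arbitrary
\[
y = \sum_{k=0}^{\infty}\bigl(V^k([x]s_\varphi(\alpha_k)) + s_\varphi(a_k)\,dV^k([x])\bigr) \in K^1
\]
with $R_1(y) = 0 \in \Omega^1_A$. Since $R_1$ annihilates every summand with $k \geq 1$ and sends the $k = 0$ summand to $x\alpha_0 + a_0\,dx$, the hypothesis $R_1(y) = 0$ says exactly that $x\alpha_0 + a_0\,dx = 0$ in $\Omega^1_A$. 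Proposition~\ref{lift to kernel} then places the $k = 0$ summand $[x]s_\varphi(\alpha_0) + s_\varphi(a_0)\,d[x]$ into $V(K^1) + dV(K^0)$.

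Next I would check that every element of $V(K^1) + dV(K^0)$, when rewritten in the defining form of $K^1$, involves only indices $k \geq 1$. For $V(K^1)$ this is immediate, using $V\circ V^k = V^{k+1}$, the module identity $V(F(\omega)\eta) = \omega V(\eta)$, $F\circ s_\varphi = s_\varphi\circ\varphi$, and $Vd = pdV$. For $dV(K^0)$, one first observes that $V(K^0) \subseteq K^0$ with all terms of index $\geq 1$, and then that $d$ applied to a term $s_\varphi(b)V^{k+1}([x])$ produces $s_\varphi(b)\,dV^{k+1}([x]) + V^{k+1}([x])\,ds_\varphi(b)$; the first summand already has the required shape, and the second is converted to a term of the form $V^{k+1}([x]s_\varphi(\cdot))$ using $dF = pFd$ (so that $F^{k+1}ds_\varphi(c) = p^{-(k+1)}ds_\varphi(\varphi^{k+1}(c))$) together with the module identity for $V^{k+1}$. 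Adding the resulting $k \geq 1$ expression for the $k = 0$ summand to the remaining $k \geq 1$ terms of $y$ rewrites $y$ with no $k = 0$ term, which is exactly Property~$P_1$.

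Finally, Proposition~\ref{Pn proposition} upgrades Property~$P_1$ to Property~$P_n$ for every $n \geq 1$, which is the assertion of the Corollary. I expect no real obstacle at this stage: the genuinely difficult input --- that $a\,dx \in x\Omega^1_A$ forces $a \in xA$ (Proposition~\ref{a in xA}, which uses Assumption~\ref{perfectoid assumption}), and hence Proposition~\ref{lift to kernel} --- has already been established. What is left is bookkeeping: tracking how each de\thinspace Rham-Witt operation shifts the index $k$, and handling the $k = 0$ versus $k \geq 1$ split built into the definitions of $K^0$ and $K^1$.
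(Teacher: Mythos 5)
Your proof is correct and takes the same route as the paper: reduce the $k=0$ term to the hypothesis $x\alpha_0 + a_0\,dx = 0$, invoke Proposition~\ref{lift to kernel} to place $[x]s_\varphi(\alpha_0) + s_\varphi(a_0)\,d[x]$ in $V(K^1) + dV(K^0)$, and then appeal to Proposition~\ref{Pn proposition}. The one thing you spell out that the paper leaves implicit is the bookkeeping showing that every element of $V(K^1) + dV(K^0)$ can be rewritten in the defining form of $K^1$ using only indices $k \geq 1$; your use of $V(F(\omega)\eta)=\omega V(\eta)$, $Vd = pdV$, and $dF = pFd$ for this step is exactly right.
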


The following result is the main result of this section.  It is modeled after \cite[Proposition~3.2.6]{HM03}.  Compare also Proposition~\ref{hWn}.

\begin{proposition} \label{B sequence proposition}
Let $B$ be a perfectoid ring satisfying Assumption~\ref{perfectoid assumption}.   
For every integer $n \geq 1$, we have a short exact sequence of $W_{n+1}(B)$-modules 
\begin{equation} \label{B sequence}
0 \rightarrow B \rightarrow \Omega^1_B \oplus B \rightarrow W_{n+1}\Omega^1_B \stackrel{R}{\rightarrow} W_n \Omega^1_B \rightarrow 0,
\end{equation}
where the maps and $W_{n+1}(B)$-module structure are defined as follows.  The map $B \rightarrow \Omega^1_B \oplus B$ is given by $b \mapsto (-db, p^n b)$.  The map $\Omega^1_B \oplus B \rightarrow W_{n+1} \Omega^1_B$ is given by $(\beta, b) \mapsto V^n(\beta) + dV^n(b)$.  The $W_{n+1}(B)$-module structure on $B$ is given by $F^n$.  The $W_{n+1}(B)$-module structure on $\Omega^1_B \oplus B$ is given by
\[
y \cdot (\omega, b) = \big(F^n(y) \omega - b F^n\left( dy \right), F^n(y) b\big) ,\text{ where } y \in W_{n+1}(B).
\] 
The $W_{n+1} (B)$-module structure on $W_n \Omega^1_B$ is induced by restriction.
\end{proposition}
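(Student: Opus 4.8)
The plan is to deduce \eqref{B sequence} from its analogue over $A$ (Proposition~\ref{hWn}), using the identification $W_m\Omega^1_B\cong W_m\Omega^1_A/R_m(K^1)$ together with Property~$P_n$ (Corollary~\ref{Pn holds}). Write $\Phi_B\colon\Omega^1_B\oplus B\to W_{n+1}\Omega^1_B$ for $(\beta,b)\mapsto V^n(\beta)+dV^n(b)$, write $h_n^B\colon B\to\Omega^1_B\oplus B$ for $b\mapsto(-db,p^nb)$, and write $\Phi_A,h_n$ for the corresponding maps over $A$. I would first dispose of the formal points. The composite $\Phi_B\circ h_n^B$ is zero since $dV^n(p^nb)=p^n\,dV^n(b)=V^n(db)$; the map $h_n^B$ is injective because $B$ is $p$-torsion free (Assumption~\ref{perfectoid assumption}); and that $h_n^B$, $\Phi_B$, and $R$ are $W_{n+1}(B)$-linear for the stated module structures is a short computation using $dF^n=p^nF^nd$ and $V^n(F^n(\omega)\eta)=\omega V^n(\eta)$, exactly parallel to the proofs of Lemma~\ref{Mj module structure} and Lemma~\ref{extend to morphism}. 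Surjectivity of $R\colon W_{n+1}\Omega^1_B\to W_n\Omega^1_B$ is standard, and exactness at $W_{n+1}\Omega^1_B$ — that is, $\ker R=\operatorname{im}\Phi_B=V^n\Omega^1_B+dV^n(B)$ — is Proposition~\ref{Fil} applied to the $\ZZ_{(p)}$-algebra $B$ in degree $d=1$.

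The one substantial point is exactness at $\Omega^1_B\oplus B$, i.e.\ $\ker\Phi_B=h_n^B(B)$; the inclusion $\supseteq$ was just noted. For $\subseteq$ I would use the commutative square formed by $\Phi_A$, the quotient map $q\colon W_{n+1}\Omega^1_A\twoheadrightarrow W_{n+1}\Omega^1_A/R_{n+1}(K^1)=W_{n+1}\Omega^1_B$, the surjection $\bar\theta\colon\Omega^1_A\oplus A\twoheadrightarrow\Omega^1_B\oplus B$ induced by $\theta$ (which acts as the functorial map $\Omega^1_A\to\Omega^1_B$ on the first coordinate and as $\theta$ on the second, and whose kernel is $(x\Omega^1_A+A\,dx)\oplus xA$ by Lemma~\ref{n = 1 kernel} and $\ker\theta=xA$), and $\Phi_B$. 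Since $\bar\theta$ is surjective and $q\circ\Phi_A=\Phi_B\circ\bar\theta$, one gets $\ker\Phi_B=\bar\theta\bigl(\Phi_A^{-1}(R_{n+1}(K^1))\bigr)$. By Proposition~\ref{hWn}, $\Phi_A$ has kernel $h_n(A)$ and image exactly $\ker\bigl(R\colon W_{n+1}\Omega^1_A\to W_n\Omega^1_A\bigr)$, so $\Phi_A^{-1}(R_{n+1}(K^1))=\Phi_A^{-1}\bigl(R_{n+1}(K^1)\cap\ker R\bigr)$, a subgroup containing $h_n(A)$. Since $\bar\theta(h_n(A))=\{(-d\theta(e),p^n\theta(e))\mid e\in A\}=h_n^B(B)$, it remains only to check that $\bar\theta$ sends each $\Phi_A$-preimage of an element of $R_{n+1}(K^1)\cap\ker R$ into $h_n^B(B)$.

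This is where Property~$P_n$ enters. Given $w\in R_{n+1}(K^1)\cap\ker R$, write $w=R_{n+1}(z)$ with $z\in K^1$; then $R_n(z)=R(w)=0$, so Corollary~\ref{Pn holds} lets us take $z=\sum_{k\ge n}\bigl(V^k([x]s_\varphi(\alpha_k))+s_\varphi(a_k)\,dV^k([x])\bigr)$. The terms with $k\ge n+1$ restrict to zero in level $n+1$, so $w=R_{n+1}\bigl(V^n([x]s_\varphi(\alpha))+s_\varphi(a)\,dV^n([x])\bigr)$ for a single pair $(\alpha,a)$. Expanding $[x]=\sum_i s_\varphi(x_i)V^i(1)$ by Lemma~\ref{Teich lemma} and evaluating $V^n$, $d$, and module multiplication in the coordinates of Corollary~\ref{de Rham-Witt as A-module} and Remark~\ref{M Witt complex}, I would compute that the unique class $v+h_n(A)\in(\Omega^1_A\oplus A)/h_n(A)$ with $\Phi_A(v)=w$ is represented by $\bigl(x\alpha-\tfrac{1}{p^n}x\,d\varphi^n(a),\ x\,\varphi^n(a)\bigr)$, both of whose coordinates are divisible by $x$. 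Hence any representative of $v+h_n(A)$ in $\Omega^1_A\oplus A$ has the form $\bigl(x\gamma-de,\ xc+p^ne\bigr)$ with $\gamma\in\Omega^1_A$ and $c,e\in A$, which $\bar\theta$ maps to $(-d\theta(e),p^n\theta(e))=h_n^B(\theta(e))\in h_n^B(B)$ because $x\in\ker\theta$. This gives $\ker\Phi_B=h_n^B(B)$ and hence the exactness of \eqref{B sequence}.

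I expect the only real work to be that last computation — expressing $R_{n+1}\bigl(V^n([x]s_\varphi(\alpha))+s_\varphi(a)\,dV^n([x])\bigr)$ in the $(\Omega^1_A\oplus A)/h_n(A)$-coordinates of Corollary~\ref{de Rham-Witt as A-module}. It is bookkeeping with the Teichmüller expansion of $[x]$ and the structure formulas of Remark~\ref{M Witt complex}; the essential output is simply that $x$ divides both coordinates, after which $\theta$ collapses everything to $h_n^B(B)$. Every other step is either formal or a direct appeal to Proposition~\ref{hWn}, Proposition~\ref{Fil}, Lemma~\ref{n = 1 kernel}, and Corollary~\ref{Pn holds}.
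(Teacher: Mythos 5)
Your proof is correct and reaches the conclusion by a genuinely different route than the paper. In your notation (with $\Phi_A$, $\Phi_B$ the maps $(\alpha,a)\mapsto V^n(\alpha)+dV^n(a)$ over $A$ and $B$, $h_n^B\colon b\mapsto(-db,p^nb)$, and $\bar\theta$ the surjection $\Omega^1_A\oplus A\to\Omega^1_B\oplus B$), you carry out a direct diagram chase: identify $\ker\Phi_B$ with $\bar\theta\bigl(\Phi_A^{-1}(R_{n+1}(K^1)\cap\ker R)\bigr)$, invoke Property~$P_n$ (Corollary~\ref{Pn holds}) to express each $w\in R_{n+1}(K^1)\cap\ker R$ as $R_{n+1}\bigl(V^n([x]s_\varphi(\alpha))+s_\varphi(a)\,dV^n([x])\bigr)$, and observe that the resulting $\Phi_A$-preimage has both coordinates divisible by $x$, hence collapses under $\bar\theta$ into $h_n^B(B)$. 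The paper instead fits the two four-term sequences (over $A$ and over $B$) into a double complex with vertical maps built from $\theta$ and $W_\bullet(\theta)$, and compares the two associated spectral sequences; there Corollary~\ref{Pn holds} appears as the vanishing of a single obstruction term, which, combined with the exactness of the $A$-row from Proposition~\ref{hWn}, forces $\ker\Phi_B/h_n^B(B)$ to vanish. Both proofs rest on the same inputs --- Propositions~\ref{hWn} and~\ref{Fil}, Lemma~\ref{n = 1 kernel}, Corollary~\ref{Pn holds}, and the identification $W_m\Omega^1_B\cong W_m\Omega^1_A/R_m(K^1)$ --- so the difference is one of packaging: yours is more elementary and makes the role of Property~$P_n$ transparent, while the paper's spectral-sequence argument absorbs the four separate exactness checks into one comparison (and recovers the vanishing of the extra term $N_1$ at no cost). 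One small simplification to your write-up: the computation you flag as the ``only real work'' does not require the Teichm\"uller expansion from Lemma~\ref{Teich lemma}; since $R_{n+1}V^n=V^nR_1$ and $R_1([x])=x$, the identity $s_\varphi(a)\,dV^n(x)=dV^n(x\varphi^n(a))-V^n\bigl(\tfrac{x}{p^n}d\varphi^n(a)\bigr)$ already yields $\alpha'=x\alpha-\tfrac{1}{p^n}x\,d\varphi^n(a)$ and $a'=x\varphi^n(a)$ directly, which is exactly the pair you wrote down.
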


\begin{proof}
Consider the following short exact sequence of chain complexes (the chain complexes are written horizontally, and the short exact sequences are written vertically):
\[
\xymatrix{
0 \ar[r] & 0 \ar[r] & 0 \ar[r] & 0 \ar[r] & 0 \ar[r] & 0 \\
0 \ar[r] & B \ar[r] \ar[u] & \Omega^1_B \oplus B \ar[r]\ar[u] & W_{n+1} \Omega^1_B \ar[r]\ar[u] & W_n \Omega^1_B \ar[r] \ar[u]& 0 \\
0 \ar[r]  & A \ar[r] \ar[u]^{\theta} & \Omega^1_A \oplus A \ar[r] \ar[u]^{\theta} & W_{n+1} \Omega^1_A \ar[u]^{W_{n+1}(\theta)} \ar[r] & W_n \Omega^1_A \ar[u]^{W_{n}(\theta)} \ar[r] & 0\\
0 \ar[r] & xA \ar[r] \ar[u]  & R_1(K^1) \oplus R_1(K^0) \ar[r] \ar[u]  & R_{n+1}(K^1)  \ar[r] \ar[u] & R_n(K^1)  \ar[r] \ar[u] & 0\\
0 \ar[r] & 0 \ar[r]\ar[u] & 0 \ar[r]\ar[u] & 0 \ar[r] \ar[u]& 0 \ar[r] \ar[u]& 0.
}
\]
For convenience, write these chain complexes as $0 \rightarrow K_{\bullet} \rightarrow A_{\bullet} \rightarrow B_{\bullet} \rightarrow 0$, where we consider the complexes concentrated in degrees 0 to 3.  We must show that $H_n(B_{\bullet}) \cong 0$ for all $n$.  It's trivial that $H_0(B_{\bullet}) \cong 0$ and $H_3(B_{\bullet}) \cong 0$.  Using Proposition~\ref{Fil}, we have also that $H_1(B_{\bullet}) \cong 0$.  This leaves $H_2(B_{\bullet})$.

Consider now the long exact sequence in homology \cite[Theorem~1.3.1]{Wei94} associated to the above short exact sequence of chain complexes.  By Proposition~\ref{hWn}, we have that $H_n(A_{\bullet}) \cong 0$ for all $n$.   It follows that $H_2(B_{\bullet}) \cong H_1(K_{\bullet})$.  We will finish the proof by showing that $H_1(K_{\bullet}) \cong 0$.

Consider an element in $R_{n+1}(K^1)$ which restricts to 0 in $W_n\Omega^1_A$.  By Corollary~\ref{Pn holds}, we know that this element can be written as $V^n([x] s_{\varphi}(\alpha_n)) + s_{\varphi}(a_n) dV^n([x])$, for some $\alpha_n \in \Omega^1_A$ and some $a_n \in A$.  By Lemma~\ref{equivalent P_n}, such an element lies in $V^n(K^1) + dV^n(K^0)$, and hence is in the image of the map 
\[
R_1(K^1) \oplus R_1(K^0) \stackrel{V^n + dV^n}{\longrightarrow}  R_{n+1}(K^1).
\]
This shows that $H_1(K_{\bullet}) \cong 0$, and hence that $H_2(B_{\bullet}) \cong 0$, as required.
\end{proof}

\begin{example}
As in Example~\ref{not true for Zp}, the analogue of exactness in Equation~(\ref{B sequence}) does not hold for arbitrary quotients of a ring $A = W(k)$.  For example, exactness does not hold for $B = \ZZ_p/p\ZZ_p \cong \ZZ/p\ZZ$.  In this case, not even the left-most map $B \rightarrow \Omega^1_B \oplus B$ is injective.  More significantly, we know $W_{n+1} \Omega^1_{(\ZZ/p\ZZ)}$ is zero for all $n$, so $dV^n(1) = 0 \in W_{n+1} \Omega^1_{(\ZZ/p\ZZ)}$ for all $n \geq 1$.  By contrast, Proposition~\ref{B sequence proposition} shows that $dV^n(1) \neq 0$ for all perfectoid rings $B$ satisfying Assumption~\ref{perfectoid assumption}.
\end{example}

\begin{remark}
Assume $B$ is a ring for which the sequence in Equation~(\ref{B sequence}) is exact.  Assume $B_0 \subseteq B$ is a subring satisfying the following two properties:
\begin{enumerate}
\item We have $p^n B \cap B_0 = p^n B_0$.
\item The $B_0$-module homomorphism $\Omega^1_{B_0} \rightarrow \Omega^1_{B}$ is injective.
\end{enumerate}
It then follows that the analogue of Equation~(\ref{B sequence}) for $B_0$ is also exact.  In foreseeable applications, verifying the first condition will be trivial, but in general it may be difficult to verify the second condition.  For example, if $B$ is $\OCp$ and $B_0$ is the valuation ring in an algebraic extension of $\QQ_p$, it is not clear whether we should expect the second condition to hold.  For this reason, this remark might be more useful in the context of Hesselholt's \cite[Proposition~2.2.1]{Hes06}, which shows exactness of a log analogue of Equation~(\ref{B sequence}) when $B = \mathcal{O}_{\overline{\QQ_p}}$.  
\end{remark}

\begin{remark}
In this section and the previous section, we have been working with an explicit quotient of the de\thinspace Rham-Witt complex over $A = W(k)$.  Perhaps similar results could be attained by working with an explicit quotient of the de\thinspace Rham-Witt complex over the polynomial algebra $A[t]$.  An explicit description of the de\thinspace Rham-Witt complex over $A[t]$ is given, in terms of the de\thinspace Rham-Witt complex over $A$, in \cite[Theorem~B]{HM04}.
\end{remark}

\bibliography{Tate}
\bibliographystyle{plain}

\end{document}